\documentclass[10pt,reqno]{amsart}
\usepackage{geometry}
\usepackage[T1]{fontenc}
\usepackage{lmodern}
\usepackage{amsmath,amssymb,amsthm,mathtools}
\usepackage{xcolor}

\usepackage{tikz}
\usetikzlibrary{decorations.pathreplacing}

\newtheorem{theorem}{Theorem}[section]
\newtheorem{proposition}[theorem]{Proposition}
\newtheorem{lemma}[theorem]{Lemma}
\newtheorem{corollary}[theorem]{Corollary}
\theoremstyle{definition}
\newtheorem{definition}[theorem]{Definition}
\newtheorem{remark}[theorem]{Remark}

\newcommand{\R}{\mathbb{R}}
\newcommand{\Sym}{\operatorname{Sym}}
\newcommand{\GL}{\operatorname{GL}}
\newcommand{\tr}{\operatorname{tr}}
\newcommand{\diag}{\operatorname{diag}}
\newcommand{\ExpD}{\operatorname{Exp}_{\Delta}}
\newcommand{\LogD}{\operatorname{Log}_{\Delta}}
\newcommand{\gm}{\mathbin{\#}} 
\newcommand{\lv}{\mathbin{\odot_{\Delta}}} 
\newcommand{\hLie}{\mathfrak{h}} 

\newcommand{\cg}{\textnormal{\textsl{g}}} 

\usepackage{xparse}

\NewDocumentCommand{\cv}{e{_}}{%
  \mathbin{\#_{\mathrm{cv}\IfValueT{#1}{,#1}}}%
}

 \usepackage[pagebackref,  colorlinks=true,  citecolor=cyan,
 citebordercolor={0 .255 .255},
  linkbordercolor={0 .255 .255},
  linkcolor=cyan
 ]{hyperref}

 \usepackage{cite}

 \def\equationautorefname~#1\null{(#1)\null}

\title[Cholesky-Vinberg and Log-Vinberg means]{Cholesky-Vinberg and Log-Vinberg means \\
on the Vinberg cone}
\author{Khalid Koufany}
\address{Universit\'e de Lorraine, CNRS, IECL, F-54000 Nancy, France}
\email{khalid.koufany@univ-lorraine.fr}

 \subjclass[2020]{   15B48, 22E25, 47A64, 51M15, 52B05,     53C22, 53C35}
 
\keywords{homogeneous cone, Vinberg cone, clans, Cholesky factorization, geometric mean, affine connection}

\begin{document}

\begin{abstract}
We study two means on the five-dimensional Vinberg cone $\Omega$, viewed as a sparse cone of positive definite $3\times3$ matrices. The Cholesky-Vinberg mean is obtained from the simply transitive solvable group $H$ associated with $\Omega$. It is $H$-equivariant, and its interpolation curves are geodesics for two flat metric connections with torsion. One of the corresponding metrics is the canonical Hessian metric of the cone. The Log-Vinberg mean is defined by affine interpolation in the global logarithmic coordinates of the associated Vinberg algebra, or clan. The resulting Riemannian metric is complete and flat, and its weighted Fr\'echet means have explicit formulas. Both constructions preserve the zero pattern defining $\Omega$.
\end{abstract}

\maketitle

 
\section{Introduction}

The cone $\Sym^{++}(n,\R)$ of real symmetric positive definite matrices is at once an open convex cone and the Riemannian symmetric space
$$
\Sym^{++}(n,\R)\simeq \GL(n,\R)/\mathrm O(n),
$$
as well as the interior of the cone of squares in the Euclidean Jordan algebra $\Sym(n,\R)$. This common setting links spectral calculus, invariant metrics, matrix monotonicity, and Jordan algebra methods.

One of the basic constructions on the symmetric positive definite cone is the geometric mean. For two matrices $x,y\in \Sym^{++}(n,\R)$, it is defined by
$$
x\gm y
=
x^{1/2}\bigl(x^{-1/2}yx^{-1/2}\bigr)^{1/2}x^{1/2}.
$$
It is a Kubo-Ando mean, the distinguished positive solution of a Riccati equation, and the midpoint of the affine-invariant geodesic
$$
\gamma^{\gm}_{x,y}(t)
=
x^{1/2}\bigl(x^{-1/2}yx^{-1/2}\bigr)^t x^{1/2},
\qquad 0\le t\le 1.
$$
See \cite{PuszWoronowicz1975,KuboAndo,Bhatia2007,FiedlerPtak,Ando79,Ando87,AndoLiMathias2004,LawsonLim2001}, and \cite[Chapter~4]{Bhatia2007} for a historical account. Lim \cite{LimGM} extended this construction to arbitrary symmetric cones by means of their Euclidean Jordan algebra structure.

 The geometry of $\Sym^{++}(n,\R)$ has a wide range of applications.
Positive definite matrices occur, for example, as covariance matrices,
diffusion tensors imaging, kernel matrices, and information matrices.
In these applications, Euclidean averaging is not always appropriate.
For covariance matrices, it may produce the swelling effect: the
determinant of the arithmetic mean may exceed the determinant of every
matrix being averaged.
This is one reason for considering other geometries, including the
affine-invariant and Log-Euclidean metrics as well as several divergence and optimal transport metrics; see
\cite{Moakher2005,Pennec2006,Arsigny2007,Dryden2009,Bhatia2007}.
A more recent example is Lin's Log-Cholesky metric, a flat Riemannian
metric defined through the Cholesky decomposition, see \cite{Lin2019}.

In the present paper, our main concern is sparsity.
Fixed zero patterns arise in many situations. In statistics, they
occur in graphical models and covariance selection, and are closely
related to positive definite completion problems; see
\cite{GroneJohnsonSaWolkowicz1984,Lauritzen1996,Scott-Tuma}.
In quantum information, vanishing off-diagonal entries of a density
matrix express the absence of the corresponding coherences in a
fixed basis; see \cite{NielsenChuang2010,Zurek2003}.
From this perspective, the natural question is not only how to
interpolate positive definite matrices, but how to do so without
introducing nonzero entries where the model requires zeros.

This issue already appears in the smallest nontrivial example. Let
$$
V=
\left\{
\begin{pmatrix}
x_1&0&x_4\\
0&x_2&x_5\\
x_4&x_5&x_3
\end{pmatrix}
:
 x_1,\dots,x_5\in\R
\right\}
\subset \Sym(3,\R)
$$
and consider the open convex cone
$$
\Omega=V\cap \Sym^{++}(3,\R)
$$
  of positive definite $3\times 3$ real symmetric matrices with the fixed zero entries $x_{12}=x_{21}=0$.  Take
$$
e=I_3,
\qquad
 y=
\begin{pmatrix}
2&0&1\\
0&2&1\\
1&1&3
\end{pmatrix}.
$$
Both $e$ and $y$ belong to $\Omega$.  
Nevertheless the classical geometric mean in the ambient SPD cone gives
$
e\gm y=y^{1/2},
$
and a direct computation shows that
$$
(y^{1/2})_{12}=\frac{2}{3}-\frac{\sqrt{2}}{2}\neq 0.
$$
Thus the usual Riemannian midpoint need not lie in the sparse cone. The affine-invariant geometry of $\Sym^{++}(3,\R)$ is extrinsic to $\Omega$.

This failure is structural. The subspace $V$ is not a Jordan subalgebra of of the Euclidear Jordan algebra $\Sym(3,\R)$, so matrix square roots, Jordan powers, and affine-invariant geodesics need not preserve $\Omega$; see \cite{FarautKoranyi1994,LimGM}.

At the same time, the cone $\Omega$ is not arbitrary.  Ii is the five-dimensional Vinberg cone, the smallest homogeneous convex cone that is not symmetric and it is attached to a particular algebraic structure of $V$.  Homogeneous cones were studied  by Vinberg, who extended the Koecher-Vinberg correspondence for symmetric cones to the non-symmetric setting by introducing \emph{compact normal left-symmetric algebras}, also called \emph{clans} or \emph{Vinberg algebras}; see \cite{Vinberg1963}. Related developments appear in the work of Koszul, Rothaus, Shima, and Ishi; see \cite{Koszul,Rothaus1966,Shima1976,Shima2007,Ishi2001,Ishi2013, Ishi2015}. 
In particular, a homogeneous convex cone admits a simply transitive split solvable group of linear automorphisms.

For the Vinberg cone considered here, the simply transitive solvable group is the lower triangular group
$$
H=
\left\{
\begin{pmatrix}
a_1&0&0\\
0&a_2&0\\
u_4&u_5&a_3
\end{pmatrix}
:
 a_1,a_2,a_3>0,
\ u_4,u_5\in\R
\right\}.
$$
It acts on $\Omega$ by congruence,
$$
\rho(T)x=TxT^{\top},
$$
and the orbit map
$$
\Phi:H\longrightarrow \Omega,
\qquad
\Phi(T)=\rho(T)e=TT^{\top},
$$
is a global diffeomorphism.  This is the Cholesky-type parametrization naturally adapted to the zero pattern.

The first mean studied in this paper preserves the prescribed sparsity pattern. For $x,y\in\Omega$, let $T_x,T_y\in H$ be their lower Cholesky factors, so that $x=T_xT_x^{\top}$ and $y=T_yT_y^{\top}$. We define the \emph{Cholesky-Vinberg path}
$$
\gamma^{\mathrm{cv}}_{x,y}(t)
=
\Phi\bigl(T_x(T_x^{-1}T_y)^t\bigr)=T_x(T_x^{-1}T_y)^t{(T_x^{-1}T_y)^t}^\top T_x^\top,
\qquad 0\le t\le1,
$$
where powers are principal powers in the triangular group.  Its midpoint
$$
x\cv y:=\gamma^{\mathrm{cv}}_{x,y}(1/2)
$$
is the \emph{Cholesky-Vinberg mean}. Because the interpolation takes place entirely in $H$, the path stays in $\Omega$. It is also equivariant under the natural action of $H$ and is tied to flat affine connections with nonzero torsion. 

This construction is intrinsic to the fixed homogeneous realization $(\Omega,H,e)$. In particular, it uses the triangular group $H$ attached to the cone, rather than the ambient  geometry of $\Sym^{++}(3,\mathbb R)$. This point of view is consistent with the work of Choi and Lim \cite{ChoiLim2022}, who developed operator means on the group $\mathcal L_m$ of lower triangular $m\times m$ matrices with positive diagonal entries. 
 Since $H$ is a closed subgroup of $\mathcal L_3$ stable under principal powers, their geometric mean restricts to $H$, and the Cholesky-Vinberg mean is precisely its pushforward by $\Phi$.

The second construction comes from Vinberg's algebra rather than from multiplicative interpolation in $H$.  The clan $(V,\Delta)$ attached to $\Omega$ provides global logarithmic coordinates
$$
\LogD:\Omega\longrightarrow V,
\qquad
\ExpD=\LogD^{-1},
$$
where $x\Delta y=\underline{x}\,y+y\,\underline{x}^{\top}$ is the clan product in $V$, see \eqref{clan-prod}.
 We define the \emph{Log-Vinberg path} by
$$
\gamma^{\Delta}_{x,y}(t)
=
\ExpD\bigl((1-t)\LogD(x)+t\LogD(y)\bigr),
\qquad 0\le t\le1,
$$
and its midpoint
$$
x\lv y:=\gamma^{\Delta}_{x,y}(1/2)
$$
is the \emph{Log-Vinberg mean}. It is the analogue of log-Euclidean averaging associated with the fixed clan $(V,\Delta,e)$. Its geometry is flat and torsion-free and admits explicit weighted barycenters. Unlike the Cholesky-Vinberg construction, its natural covariance is covariance under clan automorphisms and translations in the logarithmic coordinates, not the multiplicative $H$-action. Moreover,  $\Omega$  endowed with the law
$$
x\oplus_{\Delta} y=\ExpD\bigl(\LogD(x)+\LogD(y)\bigr).
$$
is an abelian Lie group. The clan logarithm   a Lie group isomorphism from
$(\Omega,\oplus_{\Delta})$ onto $(V,+)$.\\

 The paper is organized as follows.
Section~\ref{Sec-Homg-Cones} recalls the part of Vinberg's theory
needed here, relating homogeneous convex cones to clans and
simply transitive solvable groups.
Section~\ref{Vinberg-cone} specializes this framework to the
five-dimensional Vinberg cone. We describe its triangular
group $H$, the Cholesky parametrization $\Phi$, and the
associated clan, and obtain explicit formulas for the
clan exponential and logarithm.

In Section~\ref{CV-mean}, we introduce the Cholesky-Vinberg
mean  and relate it to the geometric mean of lower
triangular matrices. We establish its main properties,
including $H$-equivariance, geometric interpolation of the
determinant, and the recursion identity for intermediate
points.
Section~\ref{Hessian} compares this construction with the
canonical Hessian metric. We show that the Cholesky-Vinberg
paths are not generally Levi-Civita geodesics and that their
midpoints need not be Riemannian midpoints for this metric.
Section~\ref{affine} then identifies two flat connections
with torsion for which these paths are geodesics. These
connections arise from the left and right invariant
parallelisms on $H$. We describe their compatible metrics,
identify one of them with the canonical Hessian metric,
and prove that no $H$-invariant Riemannian metric has all
the Cholesky-Vinberg paths as affinely parametrized
Levi-Civita geodesics.

Section~\ref{sec:log-fla-Lov-Vinberg} develops the Log-Vinberg
construction through the clan logarithmic coordinates.
We determine its geodesics and distance, prove completeness
and flatness of the resulting Riemannian metric, and derive
explicit formulas for weighted Fr\'echet means. We also
describe the abelian Lie group structure transported to
$\Omega$ by these coordinates.
Section~\ref{section:Lin-type} completes the comparison
by restricting Lin's Log-Cholesky metric to $\Omega$.\\

The five-dimensional Vinberg cone allows us to write the group
action, the clan operations, and the resulting means and
metrics explicitly. Extensions of these constructions
to general homogeneous convex cones will be considered
in a forthcoming paper.
 
\section{Vinberg's theory of homogeneous convex cones}\label{Sec-Homg-Cones}

We briefly recall the part of Vinberg's theory that will be used later. Let $V$ be a finite-dimensional real vector space and let $\Omega\subset V$ be an open convex cone. The cone is called \emph{proper} if
$
\overline{\Omega}\cap(-\overline{\Omega})=\{0\},
$
and \emph{homogeneous} if its linear automorphism group
$$
G(\Omega):=\{g\in \GL(V):g\Omega=\Omega\}
$$
acts transitively on $\Omega$.   A \emph{symmetric cone} is a proper open convex cone $\Omega\subset V$ that is homogeneous and self-dual with respect to some Euclidean inner product on $V$.
By the Koecher-Vinberg theorem, symmetric cones are exactly the cones of squares of invertible elements in Euclidean Jordan algebras, see \cite{FarautKoranyi1994,Vinberg1963, Koecher}.

Building on earlier work of Koszul \cite{Koszul} on affine and homogeneous geometry, Vinberg extended the Koecher-Vinberg correspondence from symmetric cones to arbitrary homogeneous convex cones by means of \emph{clans}, or compact normal left-symmetric algebras. If $\Omega$ is a proper homogeneous cone, then there exists a connected split solvable Lie subgroup $H\subset G(\Omega)$ acting simply transitively on $\Omega$; see \cite[Chapter~1, Theorem~1]{Vinberg1963}. Fix a base point $E\in\Omega$ and let $\mathfrak h$ be the Lie algebra of $H$. Differentiating the orbit map
$$
H\to \Omega,\qquad h\mapsto h\cdot E,
$$
at the identity yields a linear isomorphism
$$
\mathfrak h\to V,\qquad L\mapsto L\cdot E.
$$
Thus every $x\in V$ can be written uniquely in the form
$
x=L_x\cdot E
$
with $L_x\in\mathfrak h$, and this leads to the bilinear product
$$
x\triangle y:=L_x\cdot y,\qquad x,y\in V.
$$

With this product, $(V,\triangle)$ becomes a clan with unit element $E$. For instance, the associator
$$
[x\triangle y\triangle z]:=x\triangle(y\triangle z)-(x\triangle y)\triangle z
$$
is symmetric in the first two variables,
$$
[x\triangle y\triangle z]=[y\triangle x\triangle z],
$$
the \emph{Koszul bilinear form}
$$
(x\mid y):=\tr L_{x\triangle y}
$$
is positive definite, and every operator $L_x$ has only real eigenvalues.

Conversely, every clan with unit determines a homogeneous cone as the open orbit
$$
\Omega=\{(\exp L_x)\cdot E:x\in V\}.
$$
Vinberg's theorem therefore gives, up to isomorphism,  a correspondence between homogeneous convex cones and clans with unit; see \cite[Chapter~2, Theorem~2]{Vinberg1963}.  

\section{The five-dimensional Vinberg cone}\label{Vinberg-cone}
 
We now specialize Vinberg's general framework to the five-dimensional cone that will be studied throughout the paper. 

Let $\Sym(3,\R)$ denote the vector space of real symmetric $3\times 3$ matrices, and let $\Sym^{++}(3,\R)$ be the cone of real symmetric positive definite $3\times 3$ matrices. Consider the five-dimensional subspace
$$
V=\left\{x=\begin{pmatrix}
x_1&0&x_4\\
0&x_2&x_5\\
x_4&x_5&x_3
\end{pmatrix}: x_1,x_2,x_3,x_4,x_5\in\R\right\}\subset \Sym(3,\R),
$$
and the open cone of positive definite matrices in $V$,
$$
\Omega=V\cap \Sym^{++}(3,\R)=\{x\in V  : x\succ 0\}.
$$
Its $(1,2)$ and $(2,1)$ entries vanish by definition.
The positivity conditions are
$$
x_1>0,\qquad x_2>0,\qquad S(x):=x_3-\frac{x_4^2}{x_1}-\frac{x_5^2}{x_2}>0,
$$
where $S(x)$ is the Schur complement. We call $\Omega$ the (dual) \emph{Vinberg cone}; it is the smallest-dimensional homogeneous convex cone that is not symmetric.

We denote by $\hLie$ the Lie algebra  of lower triangular matrices, 
$$
\hLie=\left\{\begin{pmatrix}a_1&0&0\\0&a_2&0\\u_4&u_5&a_3\end{pmatrix}: a_1,a_2,a_3,u_4,u_5\in\R\right\}.
$$
Its connected and simply connected Lie group is
$$
H=\left\{\begin{pmatrix}a_1&0&0\\0&a_2&0\\u_4&u_5&a_3\end{pmatrix}: a_1,a_2,a_3>0,\ u_4,u_5\in\R\right\}.
$$
Then $H$ is a split solvable Lie group acting simply transitively on $\Omega$ by congruence,
$$
\rho(A)x=AxA^\top,\qquad A\in H,\quad x\in\Omega.
$$

For
$$
x=\begin{pmatrix}x_1&0&x_4\\0&x_2&x_5\\x_4&x_5&x_3\end{pmatrix}\in V
$$
we write
$$
\iota(x)=\underline{x}:=\begin{pmatrix}x_1/2&0&0\\0&x_2/2&0\\x_4&x_5&x_3/2\end{pmatrix},
$$
so that
$
x=\underline{x}+\underline{x}^{\top},
$
and the inverse of $\iota:V\to \hLie$ is the symmetrization map $A\mapsto A+A^{\top}$.

We use the standard clan product
\begin{equation}\label{clan-prod}
x\Delta y=\underline{x}\,y+y\,\underline{x}^{\top},\qquad x,y\in V,
\end{equation}
and write
$$
L_x(y)=x\Delta y.
$$
Then $(V,\Delta)$ is the clan with unit $e=I_3$ attached to $\Omega$.

\begin{proposition}
The map
\begin{equation}\label{chol}
\Phi:H\to\Omega,\qquad \Phi(T)=\rho(T)e=TT^{\top},
\end{equation}
is a diffeomorphism.
\end{proposition}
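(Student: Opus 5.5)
We show directly that $\Phi$ is a well-defined smooth bijection with smooth inverse, by computing $TT^{\top}$ explicitly and solving for $T$. For
$$
T=\begin{pmatrix}a_1&0&0\\0&a_2&0\\u_4&u_5&a_3\end{pmatrix}\in H
$$
a direct multiplication gives
$$
TT^{\top}=\begin{pmatrix}a_1^2&0&a_1u_4\\0&a_2^2&a_2u_5\\a_1u_4&a_2u_5&u_4^2+u_5^2+a_3^2\end{pmatrix}.
$$
This matrix has vanishing $(1,2)$ entry, so it lies in $V$. It is positive definite because $T$ is invertible, since $\det T=a_1a_2a_3>0$. Hence $\Phi(H)\subset\Omega$. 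The entries are polynomial in the coordinates $(a_1,a_2,a_3,u_4,u_5)$ of the open set $H\subset\R^5$, so $\Phi$ is smooth.

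Next we construct the inverse explicitly. Given $x\in\Omega$, the positivity conditions $x_1>0$, $x_2>0$ and $S(x)>0$ from Section~\ref{Vinberg-cone} let us solve $TT^{\top}=x$ entry by entry:
$$
a_1=\sqrt{x_1},\quad a_2=\sqrt{x_2},\quad u_4=\frac{x_4}{\sqrt{x_1}},\quad u_5=\frac{x_5}{\sqrt{x_2}},\quad a_3=\sqrt{x_3-\frac{x_4^2}{x_1}-\frac{x_5^2}{x_2}}=\sqrt{S(x)}.
$$
The requirement $a_i>0$ forces each choice of square root, so the solution is unique. This gives $\Phi$ both injective and surjective onto $\Omega$. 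The resulting map $\Psi:\Omega\to H$ is smooth on $\Omega$, since square roots of positive smooth functions and quotients by nonvanishing functions are smooth. By construction, $\Phi\circ\Psi=\mathrm{id}_\Omega$ and $\Psi\circ\Phi=\mathrm{id}_H$.

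The only point needing care is $a_3$: we must check that $u_4^2+u_5^2+a_3^2=x_3$ together with the other equations forces $a_3^2=S(x)$. This holds, and $S(x)>0$ is exactly the positivity condition, so $a_3$ exists and is positive. Alternatively, injectivity alone would follow from simple transitivity of $\rho$, which the paper states, since $\rho$ acting simply transitively makes the orbit map a bijection. But the explicit inverse gives smoothness of $\Phi^{-1}$ for free, so no inverse function theorem argument is needed. Hence $\Phi$ is a diffeomorphism.
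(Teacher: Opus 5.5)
Your proof is correct and is essentially the paper's argument. You compute $TT^{\top}$ explicitly to get $\Phi(H)\subset\Omega$ and smoothness, then solve entry by entry for the lower Cholesky factor $T_x$ using $x_1,x_2,S(x)>0$; its explicit smooth formula gives both bijectivity and smoothness of the inverse. You are slightly more explicit than the paper about positive definiteness of $TT^{\top}$ and about how positivity of the diagonal forces uniqueness.
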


\begin{proof}
Write
$$
T=\begin{pmatrix}a_1&0&0\\0&a_2&0\\u_4&u_5&a_3\end{pmatrix}\in H.
$$
Then
\begin{equation}\label{PhiT}
\Phi(T)=\begin{pmatrix}a_1^2&0&a_1u_4\\0&a_2^2&a_2u_5\\a_1u_4&a_2u_5&u_4^2+u_5^2+a_3^2\end{pmatrix}\in\Omega.
\end{equation}
Conversely, if
$$
x=\begin{pmatrix}x_1&0&x_4\\0&x_2&x_5\\x_4&x_5&x_3\end{pmatrix}\in\Omega,
$$
its lower Cholesky factor is
\begin{equation}\label{Tx}
T_x=\begin{pmatrix}
\sqrt{x_1}&0&0\\
0&\sqrt{x_2}&0\\
\dfrac{x_4}{\sqrt{x_1}}&\dfrac{x_5}{\sqrt{x_2}}&\sqrt{S(x)}
\end{pmatrix}\in H,
\end{equation}
and $x=T_xT_x^{\top}$. Thus $\Phi$ is bijective. The expressions   \eqref{PhiT} and \eqref{Tx}  show that both $\Phi$ and its inverse $x\mapsto T_x$ are smooth on their domains. Hence $\Phi$ is a diffeomorphism.
\end{proof}
The decomposition \eqref{chol} is exactly the Cholesky decomposition of SPD matrices.

\begin{lemma}\label{29-april-1}
If $T\in H$ and $t\in\R$, then $T^t=\exp(t\log T)$ belongs to $H$. In particular, if
$$
T=\begin{pmatrix}a_1&0&0\\0&a_2&0\\u_4&u_5&a_3\end{pmatrix}\in H,
$$
then the principal square root is
\begin{equation}\label{square}
T^{1/2}=\begin{pmatrix}
\sqrt{a_1}&0&0\\
0&\sqrt{a_2}&0\\
\dfrac{u_4}{\sqrt{a_1}+\sqrt{a_3}}&\dfrac{u_5}{\sqrt{a_2}+\sqrt{a_3}}&\sqrt{a_3}
\end{pmatrix}.
\end{equation}
\end{lemma}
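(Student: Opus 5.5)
The plan is to define $\log T$ for $T\in H$ explicitly, show it lies in $\hLie$, and then set $T^t=\exp(t\log T)$, which lies in $H$ because $H$ is the connected Lie group with Lie algebra $\hLie$. More concretely, I would first observe that $\hLie$ is closed under matrix multiplication, since the product of two matrices with the zero pattern of $\hLie$ again has that pattern. Hence $\hLie$ is an associative subalgebra of $M_3(\R)$, so every power series in an element of $\hLie$ stays in $\hLie$.

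Next I would write $T=D+N$ with $D=\diag(a_1,a_2,a_3)$ and $N$ strictly lower triangular, supported only in positions $(3,1),(3,2)$. The spectrum of $T$ is $\{a_1,a_2,a_3\}\subset(0,\infty)$, so the principal logarithm $\log T$ is well defined. It can be computed as a polynomial in $T$ (Lagrange–Sylvester interpolation, or the Cauchy integral $\frac{1}{2\pi i}\oint \log z\,(z-T)^{-1}dz$), and therefore lies in the subalgebra $\hLie$. Its diagonal is $(\log a_1,\log a_2,\log a_3)$, since $(z-T)^{-1}$ is lower triangular with diagonal $(z-a_i)^{-1}$. Consequently $\exp(t\log T)\in\hLie$, and its diagonal entries are $a_i^t>0$, so $\exp(t\log T)\in H$. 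Equivalently, $\exp(\hLie)\subset H$ because $H$ is the identity component of the invertible elements of $\hLie$ with positive diagonal.

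For formula \eqref{square}, I would avoid computing the logarithm. Instead I would check directly that the displayed matrix $R$ satisfies $R^2=T$. The diagonal gives $a_i$. The $(3,1)$ entry of $R^2$ is
$$
\frac{u_4}{\sqrt{a_1}+\sqrt{a_3}}\bigl(\sqrt{a_1}+\sqrt{a_3}\bigr)=u_4,
$$
and the $(3,2)$ entry works the same way. The $(3,1)$ and $(3,2)$ entries of the square only pick up the diagonal term and one off-diagonal term, because $N^2=0$ and $N$ has no entries feeding row 3 from row 3. I would then argue that $R$ is the principal square root. Its eigenvalues $\sqrt{a_i}$ are positive, and the principal square root is the unique square root with spectrum in the open right half-plane. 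Moreover, $T^{1/2}=\exp(\tfrac12\log T)$ has spectrum $\{\sqrt{a_i}\}$ and squares to $T$, so the two coincide.

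The main obstacle is only the uniqueness claim for square roots with spectrum in the right half-plane, which is standard (Higham's theorem). I would cite it rather than reprove it. If the case $a_1=a_3$ needs care, it is harmless: the denominator $\sqrt{a_1}+\sqrt{a_3}$ never vanishes, and the direct verification $R^2=T$ does not depend on distinct eigenvalues.
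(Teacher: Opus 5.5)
Your proof is correct and follows essentially the paper's route: $\log T$ stays in $\hLie$ because the functional calculus preserves the zero pattern (you justify this via $\log T$ being a polynomial in $T$ and $\hLie$ being a unital subalgebra), and the square-root formula comes from the entrywise equations for $S^2=T$. The only minor difference is how you pin down the principal root: you cite uniqueness of square roots with spectrum in the open right half-plane, whereas the paper solves $S^2=T$ inside $H$ with positive diagonal, which together with $T^{1/2}\in H$ from the first part already forces uniqueness since $\sqrt{a_i}+\sqrt{a_3}>0$.
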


\begin{proof}
The spectrum of $T$ is contained in $(0,\infty)$, so its principal logarithm is defined. Holomorphic functional calculus preserves triangularity and the zero $(2,1)$-entry, hence $\log T\in\hLie$. Therefore $T^t=\exp(t\log T)\in H$ for every real $t$. For the square root, let
$$
S=\begin{pmatrix}b_1&0&0\\0&b_2&0\\v_4&v_5&b_3\end{pmatrix}.
$$
The equation $S^2=T$ gives
$$
b_1^2=a_1,\;\; b_2^2=a_2,\;\; b_3^2=a_3,\;\; 
(b_1+b_3)v_4=u_4,\;\;  (b_2+b_3)v_5=u_5.
$$
Choosing the positive square roots on the diagonal yields formula~\eqref{square}.
\end{proof}

\begin{proposition}
For $x,y\in V$, one has
$$
\exp(tL_x)y=\exp(t \underline{x})\,y\,\exp(t \underline{x})^{\top},\qquad t\in\R.
$$
In particular,
$$
\ExpD(x):=\exp(L_x)e=\exp(\underline{x})\exp(\underline{x})^{\top}=\Phi\bigl(\exp(\underline{x})\bigr).
$$
Hence the clan exponential $\ExpD:V\to\Omega$ is a diffeomorphism, with inverse, the clan logarithm,
$$
\LogD(x)=\log(T_x)+\log(T_x)^{\top}.
$$
\end{proposition}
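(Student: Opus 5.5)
The plan is to prove the first identity by showing that both sides solve the same linear ODE in $t$ with the same initial value. Fix $x,y\in V$ and set
$$
Y(t)=\exp(t\underline{x})\,y\,\exp(t\underline{x})^{\top}.
$$
Then $Y(0)=y$. Differentiating gives
$$
Y'(t)=\underline{x}\,Y(t)+Y(t)\,\underline{x}^{\top}.
$$
First I check that $Y(t)$ stays in $V$. Since $\underline{x}\in\hLie$, the matrix $\exp(t\underline{x})$ lies in $H$. Conjugation by elements of $H$ preserves $V$: the zero $(1,2)$ entry of $AyA^\top$ survives because the first two rows of $A$ are diagonal, and the rest follows from \eqref{PhiT}-type computations or directly. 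So $Y(t)\in V$, and $Y'(t)=L_x(Y(t))$ by \eqref{clan-prod}. The curve $t\mapsto\exp(tL_x)y$ solves the same linear ODE $Z'=L_xZ$ on $V$ with $Z(0)=y$. Uniqueness of solutions of linear ODEs then gives the identity.

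Setting $t=1$ and $y=e=I_3$ yields
$$
\ExpD(x)=\exp(\underline{x})\exp(\underline{x})^\top=\Phi(\exp(\underline{x})).
$$

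For the diffeomorphism claim, I factor $\ExpD=\Phi\circ\exp\circ\,\iota$. The map $\iota:V\to\hLie$ is a linear isomorphism, and $\Phi$ is a diffeomorphism by the earlier Proposition. It remains to show that $\exp:\hLie\to H$ is a diffeomorphism. I expect this to be the main step. I plan to use Lemma~\ref{29-april-1}: for $T\in H$, the principal logarithm $\log T$ lies in $\hLie$ and $\exp(\log T)=T$. Hence $\exp$ is surjective onto $H$.

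Injectivity and smoothness of the inverse come next. Every $A\in\hLie$ has real spectrum $\{a_1,a_2,a_3\}$, so $\exp A$ has positive spectrum, and the principal logarithm of $\exp A$ is $A$. Thus $\log\circ\exp=\mathrm{id}$ on matrices with real spectrum. This holds because $\exp$ is injective on the strip $|\mathrm{Im}\,z|<\pi$, so holomorphic functional calculus gives $\log(\exp A)=A$. Both $\exp$ and $\log$ are smooth, the latter on matrices with spectrum in $(0,\infty)$.

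Alternatively, I can argue by explicit computation. For lower triangular $A$ with diagonal $a_i$ and entries $u_4,u_5$,
$$
(\exp A)_{31}=u_4\,\frac{e^{a_1}-e^{a_3}}{a_1-a_3},
$$
where the divided difference is interpreted as $e^{a_1}$ when $a_1=a_3$ and is always positive. An analogous formula holds for the $(3,2)$ entry. This makes the bijection and the smoothness of its inverse evident.

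Finally, inverting the factorization gives
$$
\LogD(x)=\iota^{-1}\bigl(\log(\Phi^{-1}(x))\bigr)=\log T_x+(\log T_x)^\top,
$$
since $\Phi^{-1}(x)=T_x$ and $\iota^{-1}$ is symmetrization.
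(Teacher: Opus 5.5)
Your proof is correct and follows essentially the same route as the paper. Both arguments use ODE uniqueness for $t\mapsto\exp(t\underline{x})\,y\,\exp(t\underline{x})^{\top}$, then the factorization $\ExpD=\Phi\circ\exp\circ\iota$, and then invert it. You also justify two points the paper takes for granted, namely that this curve stays in $V$ and that $\exp:\hLie\to H$ is a diffeomorphism, and both justifications are sound.
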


\begin{proof}
For $x, y\in V$, from the clan structure we have
$$\exp(L_x)y=y+x\Delta y+\frac{1}{2}x\Delta(x\Delta y)+\cdots.$$
Fix $x,y\in V$ and set
$$
F(t)=\exp(t\underline{x})\,y\,\exp(t\underline{x})^{\top}.
$$
Then
$$
\begin{aligned}
F'(t)&=\underline{x}\exp(t\underline{x})\,y\,\exp(t\underline{x})^{\top}+\exp(t\underline{x})\,y\,\exp(t\underline{x})^{\top}\underline{x}^{\top}\\
&=\underline{x}F(t)+F(t)\underline{x}^{\top}\\
&=x\Delta F(t)=L_x(F(t)).
\end{aligned}
$$
Since $F(0)=y$, uniqueness of solutions of linear ODEs gives $F(t)=\exp(tL_x)y$. Setting $t=1$ and $y=e$ proves the formula for $\ExpD$.

Now $x\mapsto \underline{x}$ is a linear isomorphism from $V$ onto $\hLie$, the matrix exponential is a diffeomorphism from $\hLie$ onto $H$, and $\Phi$ is a diffeomorphism from $H$ onto $\Omega$. Therefore $\ExpD=\Phi\circ\exp\circ(x\mapsto\underline{x})$ is a diffeomorphism. The inverse is obtained by composing the inverse maps, thus
$$
\LogD(x)=(\log T_x)+(\log T_x)^{\top}.
$$
\end{proof}

Note that the equality $x=\ExpD(v)$ is equivalent to $T_x=\exp(\underline v)$ by uniqueness of the lower Cholesky factor.

 \begin{proposition}\label{prop:explicit-log-coordinates}
For
$$
 x=\begin{pmatrix}x_1&0&x_4\\0&x_2&x_5\\x_4&x_5&x_3\end{pmatrix}\in\Omega,
$$
set
$$
 a_1=\sqrt{x_1},\quad a_2=\sqrt{x_2},\quad a_3=\sqrt{S(x)}, \; \text{with } S(x)=x_3-\frac{x_4^2}{x_1}-\frac{x_5^2}{x_2}>0.
$$
Then $\LogD(x)=(v_1,\dots,v_5)\in V$ is given by
$$
v_1=\log x_1,
\qquad
v_2=\log x_2,
\qquad
v_3=\log S(x),
$$
$$
v_4=
\begin{cases}
\dfrac{x_4}{\sqrt{x_1}}\dfrac{\log(a_1/a_3)}{a_1-a_3}, & a_1\ne a_3,\\[1ex]
\dfrac{x_4}{x_1}, & a_1=a_3,
\end{cases}
\qquad
v_5=
\begin{cases}
\dfrac{x_5}{\sqrt{x_2}}\dfrac{\log(a_2/a_3)}{a_2-a_3}, & a_2\ne a_3,\\[1ex]
\dfrac{x_5}{x_2}, & a_2=a_3.
\end{cases}
$$
Conversely, if $v=(v_1,\dots,v_5)\in V$, define
$$
a_1=e^{v_1/2},
\qquad
 a_2=e^{v_2/2},
\qquad
 a_3=e^{v_3/2},
$$
$$
b_4=
\begin{cases}
2v_4\dfrac{a_1-a_3}{v_1-v_3}, & v_1\ne v_3,\\[1ex]
v_4a_1, & v_1=v_3,
\end{cases}
\qquad
b_5=
\begin{cases}
2v_5\dfrac{a_2-a_3}{v_2-v_3}, & v_2\ne v_3,\\[1ex]
v_5a_2, & v_2=v_3,
\end{cases}
$$
Then
$$
\ExpD(v)=\begin{pmatrix}
a_1^2&0&a_1b_4\\
0&a_2^2&a_2b_5\\
a_1b_4&a_2b_5&b_4^2+b_5^2+a_3^2
\end{pmatrix}.
$$
\end{proposition}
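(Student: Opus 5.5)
The plan is to compute the principal logarithm of the lower Cholesky factor $T_x$ in closed form and then use the preceding proposition, $\LogD(x)=\log T_x+(\log T_x)^{\top}$. The inverse direction is handled the same way through $\ExpD(v)=\Phi(\exp(\underline v))$.

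\emph{The logarithm.} By \eqref{Tx}, $T_x$ has diagonal entries $a_1,a_2,a_3$ and subdiagonal entries $u_4=x_4/\sqrt{x_1}$, $u_5=x_5/\sqrt{x_2}$. By Lemma~\ref{29-april-1}, $\log T_x\in\hLie$. Write it as
$$
\log T_x=\begin{pmatrix}\ell_1&0&0\\0&\ell_2&0\\w_4&w_5&\ell_3\end{pmatrix}.
$$
The matrix decouples into two $2\times2$ lower triangular blocks, one on indices $\{1,3\}$ and one on $\{2,3\}$, because the $(1,2)$ and $(2,1)$ entries stay zero under powers. It therefore suffices to know the logarithm of
$$
\begin{pmatrix}\alpha&0\\u&\gamma\end{pmatrix},\qquad \alpha,\gamma>0.
$$
I would use the standard divided-difference formula for a function of a $2\times2$ triangular matrix. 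It gives diagonal entries $f(\alpha),f(\gamma)$ and off-diagonal entry $u\,\dfrac{f(\alpha)-f(\gamma)}{\alpha-\gamma}$, replaced by $uf'(\alpha)$ when $\alpha=\gamma$. This is justified either from $Xf(X)=f(X)X$ or by checking that $\exp$ of the candidate matrix returns the original, which is the same divided-difference identity with $f=\exp$.

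With $f=\log$ this yields $\ell_i=\log a_i$ and
$$
w_4=u_4\,\frac{\log a_1-\log a_3}{a_1-a_3}\quad(\text{or } u_4/a_1 \text{ if } a_1=a_3),
$$
and analogously for $w_5$. Symmetrizing doubles the diagonal, so $v_1=2\log a_1=\log x_1$, $v_2=\log x_2$, $v_3=\log S(x)$. The off-diagonal entries are $v_4=w_4$ and $v_5=w_5$; in the equal case $v_4=u_4/a_1=x_4/x_1$. These match the stated formulas.

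\emph{The exponential.} Given $v$, we have $\underline v=\begin{pmatrix}v_1/2&0&0\\0&v_2/2&0\\v_4&v_5&v_3/2\end{pmatrix}$. Apply the same divided-difference formula with $f=\exp$. The diagonal entries of $\exp(\underline v)$ are $a_i=e^{v_i/2}$, and the $(3,1)$ entry is
$$
v_4\,\frac{e^{v_1/2}-e^{v_3/2}}{v_1/2-v_3/2}=b_4,
$$
or $v_4e^{v_1/2}=v_4a_1$ when $v_1=v_3$. The $(3,2)$ entry $b_5$ is handled likewise. Then \eqref{PhiT} with $u_4=b_4$, $u_5=b_5$ gives the stated matrix for $\ExpD(v)=\Phi(\exp(\underline v))$.

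\emph{Main obstacle.} The only real point is establishing the $2\times2$ (equivalently, the decoupled $3\times3$) functional calculus formula, including the confluent case. I would prove it for $\exp$ directly. Write $\underline v=D+N$ with $N$ the strictly lower part. When the relevant diagonal entries differ, conjugate by the unipotent matrix that diagonalizes each block. When they are equal, $D$ and $N$ commute on that block, so $\exp(D+N)=\exp(D)(I+N)$. The $\log$ formula then follows by inverting the exp formula, using uniqueness of the principal logarithm in $\hLie$, since $\exp:\hLie\to H$ is a bijection.
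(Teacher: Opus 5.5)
Your proposal is correct and follows the paper's proof: both compute $\log T_x$ and $\exp(\underline v)$ entrywise with the divided-difference formula for analytic functions of a lower triangular matrix of this shape, then symmetrize (for $\LogD$) or apply $\Phi$ (for $\ExpD$). Your extra steps justify that formula, which the paper only asserts with ``extended by continuity'': the reduction to the $\{1,3\}$ and $\{2,3\}$ blocks, unipotent diagonalization, the commuting nilpotent computation in the confluent case, and recovering $\log$ by inverting $\exp$ on $\hLie$.
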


\begin{proof}
Write
$$
T_x=\begin{pmatrix}a_1&0&0\\0&a_2&0\\b_4&b_5&a_3\end{pmatrix},
\qquad
b_4=\frac{x_4}{\sqrt{x_1}},\qquad b_5=\frac{x_5}{\sqrt{x_2}}.
$$
For an analytic function $f$ and a triangular matrix of this form, the only nonzero off-diagonal entries of $f(T_x)$ are
$$
f(T_x)_{31}=b_4\frac{f(a_1)-f(a_3)}{a_1-a_3},
\qquad
f(T_x)_{32}=b_5\frac{f(a_2)-f(a_3)}{a_2-a_3},
$$
extended   by continuity when the diagonal arguments coincide. Taking $f=\log$ gives the stated formulas for $v_4$ and $v_5$, while the diagonal entries of $\LogD(x)=\log T_x+(\log T_x)^\top$ give $v_1,v_2,v_3$.

Conversely, the diagonal entries of $\exp(\underline v)$ are $a_1,a_2,a_3$, and the same divided difference formula with $f=\exp$ gives its $(3,1)$ and $(3,2)$ entries as $b_4$ and $b_5$. Applying $\Phi(T)=TT^\top$ yields the  expression for $\ExpD(v)$.
\end{proof}

\section{The Cholesky-Vinberg path and mean}\label{CV-mean}

\begin{definition}
For $x,y\in\Omega$ and $0\le t\le1$, the \emph{Cholesky-Vinberg path} from $x$ to $y$ is
$$
\gamma^{\mathrm{cv}}_{x,y}(t):=T_x[(T_x^{-1}T_y)^t][(T_x^{-1}T_y)^t]^\top T_x^\top=\Phi\bigl(T_x(T_x^{-1}T_y)^t\bigr).
$$
Its midpoint
$$
x\cv y: =\gamma^{\mathrm{cv}}_{x,y}(1/2)=T_x[(T_x^{-1}T_y)^{1/2}][(T_x^{-1}T_y)^{1/2}]^\top T_x^\top 
$$
is called the \emph{Cholesky-Vinberg mean} of $x$ and $y$.
\end{definition}

Lemma~\ref{29-april-1} shows that the principal power in the definition is well defined, because $T_x^{-1}T_y\in H$. 

The idempotency property $x\cv x=x$ is immediate. Also, the consistency with scalars holds,
$$
\diag(x_1,x_2,x_3)\cv\diag(y_1,y_2,y_3)=\diag(\sqrt{x_1y_1}, \sqrt{x_2y_2}, \sqrt{x_3y_3}).
$$
Indeed, if $x=\diag(x_1,x_2,x_3)$ and $y=\diag(y_1,y_2,y_3)$, then $T_x=\diag(x_1^{1/2},x_2^{1/2},x_3^{1/2})$ and likewise for $T_y$. Hence $T_x^{-1}T_y$ is diagonal and
$$
T_x(T_x^{-1}T_y)^{1/2}=\diag(x_1^{1/4}y_1^{1/4},x_2^{1/4}y_2^{1/4},x_3^{1/4}y_3^{1/4}),
$$
which yields the stated formula after applying $\Phi$.

It is useful to interpret this path at the Cholesky factors  in terms of weighted geometric means on the triangular group. Accordingly, let $A,B\in H$ and $0\le t\le 1$. We define the weighted geometric mean
$$
A\gm_{H,t}B:=A^{1/2}\bigl(A^{-1/2}BA^{-1/2}\bigr)^tA^{1/2},
$$
where all powers are principal powers inside $H$. When $t=1/2$, we write the geometric mean
$$
A\gm_H B:=A\gm_{H,1/2}B.
$$
\begin{proposition}\label{prop:cv-choi-lim}
The group $H$ is stable under the weighted geometric mean. More precisely, for $A,B\in H$ and $0\le t\le 1$ one has
$
A\gm_{H,t}B\in H
$
and
$$
A\gm_{H,t}B=A(A^{-1}B)^t.
$$
In particular,
$$
A\gm_H B=A(A^{-1}B)^{1/2}.
$$
\end{proposition}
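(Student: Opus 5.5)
The plan is to prove the identity $A^{1/2}(A^{-1/2}BA^{-1/2})^tA^{1/2}=A(A^{-1}B)^t$. Membership in $H$ then follows at once from Lemma~\ref{29-april-1}: $A^{-1}B\in H$ because $H$ is a group, so $(A^{-1}B)^t\in H$, and the product $A(A^{-1}B)^t$ lies in $H$. The key tool is the similarity covariance of the principal power. If $M$ has spectrum in $(0,\infty)$ and $P$ is invertible, then
\[
(PMP^{-1})^t=P\,M^t\,P^{-1}.
\]
This holds because $M^t=\exp(t\log M)$, and the principal logarithm, being given by holomorphic functional calculus (or by a Cauchy integral), commutes with conjugation. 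The spectrum is unchanged by conjugation, so the principal branch is the same on both sides.

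Next, apply this with $P=A^{1/2}$, which lies in $H$ by Lemma~\ref{29-april-1}, and with $M=A^{-1/2}BA^{-1/2}$. Since $H$ is a group, $M\in H$, so $M$ is lower triangular with positive diagonal and its spectrum lies in $(0,\infty)$. We have
\[
PMP^{-1}=A^{1/2}A^{-1/2}BA^{-1/2}A^{-1/2}=BA^{-1}.
\]
Here we use $A^{1/2}A^{1/2}=A$ and $(A^{1/2})^{-1}=A^{-1/2}$, both of which follow from $A^{s}=\exp(s\log A)$. Hence $(BA^{-1})^t=A^{1/2}M^tA^{-1/2}$, and therefore
\[
A\gm_{H,t}B=A^{1/2}M^tA^{1/2}=(BA^{-1})^tA.
\]

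Finally, apply the same covariance with $P=A$ and $M'=A^{-1}B\in H$. This gives $A(A^{-1}B)A^{-1}=BA^{-1}$, so $(BA^{-1})^t=A(A^{-1}B)^tA^{-1}$. Substituting into the previous expression yields $A\gm_{H,t}B=A(A^{-1}B)^t$, and the case $t=1/2$ is the stated special case.

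The main point needing care is that all principal powers are well defined and behave as claimed: each matrix whose power is taken lies in $H$ and so has positive spectrum. The conjugation identity also requires some justification, namely that $\log(PMP^{-1})=P(\log M)P^{-1}$. This holds because the right-hand side is a real logarithm of $PMP^{-1}$ with spectrum in the strip $|\operatorname{Im}z|<\pi$, and the principal logarithm is unique with that property. Everything else is formal algebra.
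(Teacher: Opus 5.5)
Your proof is correct and rests on the same key fact as the paper, the similarity covariance of principal powers. The only difference is that you conjugate twice, passing through $(BA^{-1})^tA$, whereas the paper conjugates once via $A^{-1/2}BA^{-1/2}=A^{1/2}(A^{-1}B)A^{-1/2}$ and obtains $A(A^{-1}B)^t$ directly; your justification of $\log(PMP^{-1})=P(\log M)P^{-1}$ is also more explicit than the paper's.
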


\begin{proof}
Because $H$ is closed under multiplication and inversion, $A^{-1/2}BA^{-1/2}\in H$. By Lemma~\ref{29-april-1}, its principal $t$th power also belongs to $H$, so the operator formula defines an element of $H$.

Since
$$
A^{-1/2}BA^{-1/2}=A^{1/2}(A^{-1}B)A^{-1/2},
$$
similarity invariance of principal powers gives
$$
\bigl(A^{-1/2}BA^{-1/2}\bigr)^t=A^{1/2}(A^{-1}B)^tA^{-1/2}.
$$
Multiplying by $A^{1/2}$ on both sides yields
$$
A\gm_{H,t}B=A(A^{-1}B)^t.
$$
This proves the assertions of the proposition.
\end{proof}

\begin{corollary}\label{Cor-Choi-Lim-Khalid}
For every $x,y\in\Omega$ and $0\le t\le 1$,
$$
\gamma^{\mathrm{cv}}_{x,y}(t)=\Phi\bigl(T_x\gm_{H,t}T_y\bigr).
$$
In particular,
$$
x\cv y=\Phi(T_x\gm_H T_y).
$$
\end{corollary}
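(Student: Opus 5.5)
This corollary follows from combining the definition of the Cholesky-Vinberg path with Proposition~\ref{prop:cv-choi-lim}. Fix $x,y\in\Omega$ and $0\le t\le 1$, and let $T_x,T_y\in H$ be the lower Cholesky factors given by \eqref{Tx}, so that $x=\Phi(T_x)$ and $y=\Phi(T_y)$.

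\textbf{Step 1.} By definition,
$$
\gamma^{\mathrm{cv}}_{x,y}(t)=\Phi\bigl(T_x(T_x^{-1}T_y)^t\bigr).
$$
Here $T_x^{-1}T_y\in H$ because $H$ is a group, and its principal $t$th power lies in $H$ by Lemma~\ref{29-april-1}. Hence the argument of $\Phi$ is a well-defined element of $H$.

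\textbf{Step 2.} Apply Proposition~\ref{prop:cv-choi-lim} with $A=T_x$ and $B=T_y$. It gives
$$
T_x\gm_{H,t}T_y=T_x(T_x^{-1}T_y)^t .
$$
Substituting this into Step~1 yields $\gamma^{\mathrm{cv}}_{x,y}(t)=\Phi(T_x\gm_{H,t}T_y)$.

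\textbf{Step 3.} Specializing to $t=1/2$, and using $A\gm_H B=A\gm_{H,1/2}B$ together with $x\cv y=\gamma^{\mathrm{cv}}_{x,y}(1/2)$, we obtain $x\cv y=\Phi(T_x\gm_H T_y)$.

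There is no real obstacle, since the content is entirely in Proposition~\ref{prop:cv-choi-lim}. The only point to check is that the powers appearing in both formulas are the same principal powers. The $t$th power in the definition of the path is $\exp(t\log(\cdot))$ with the principal logarithm. This is also the power used in Proposition~\ref{prop:cv-choi-lim}, where the similarity invariance $(CMC^{-1})^t=CM^tC^{-1}$ of the principal power was invoked for $C=A^{1/2}$ and $M=A^{-1}B$. So the identification is consistent.
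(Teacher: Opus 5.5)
Your proof is correct and is essentially the paper's argument: the paper also applies Proposition~\ref{prop:cv-choi-lim} with $A=T_x$, $B=T_y$ to get $T_x\gm_{H,t}T_y=T_x(T_x^{-1}T_y)^t$, then applies $\Phi$ and specializes to $t=1/2$. Your extra check that both formulas use the same principal powers is a harmless consistency remark.
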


\begin{proof}
By the proposition,
$$
T_x\gm_{H,t}T_y=T_x(T_x^{-1}T_y)^t.
$$
Applying $\Phi$ gives the result.
\end{proof}

\begin{corollary}\label{Cor2-Choi-Lim-Khalid}
Let $x, y\in\Omega$. The Cholesky factor of the Cholesky-Vinberg mean $x\cv y$ is the unique matrix $X\in H$ satisfying the Riccati equation
$$
XT_x^{-1}X=T_y.
$$
\end{corollary}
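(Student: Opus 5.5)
The plan is to identify the Cholesky factor of $x\cv y$ and then show it solves the Riccati equation, uniquely.

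By Corollary~\ref{Cor-Choi-Lim-Khalid}, $x\cv y=\Phi(T_x\gm_H T_y)$. By Proposition~\ref{prop:cv-choi-lim}, $X_0:=T_x\gm_H T_y=T_x(T_x^{-1}T_y)^{1/2}$ lies in $H$. Since $\Phi$ is a bijection, i.e.\ the lower Cholesky factor is unique, the Cholesky factor of $x\cv y$ is exactly $X_0$. It therefore suffices to prove two things: $X_0$ satisfies $XT_x^{-1}X=T_y$, and no other $X\in H$ does.

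\textbf{Existence.} Put $C=T_x^{-1}T_y\in H$ and $R=C^{1/2}\in H$, the principal square root from Lemma~\ref{29-april-1}. Then
\[
X_0T_x^{-1}X_0=T_xR\,T_x^{-1}T_xR=T_xR^2=T_xC=T_y .
\]

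\textbf{Uniqueness.} Let $X\in H$ satisfy the equation and set $R:=T_x^{-1}X\in H$. The equation becomes $T_xR\,T_x^{-1}T_xR=T_y$, that is, $R^2=C$. So uniqueness reduces to the following claim: \emph{a square root in $H$ of an element of $H$ is unique.}

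This claim is the only substantive step, and the computation in the proof of Lemma~\ref{29-april-1} settles it. Write $R\in H$ with diagonal $(b_1,b_2,b_3)$, all $b_i>0$, and off-diagonal entries $v_4,v_5$. Write $C$ with diagonal $(c_1,c_2,c_3)$ and off-diagonal entries $w_4,w_5$. Then $R^2=C$ gives
\[
b_i^2=c_i,\qquad (b_1+b_3)v_4=w_4,\qquad (b_2+b_3)v_5=w_5 .
\]
Positivity of the $b_i$ forces $b_i=\sqrt{c_i}$. Since $b_1+b_3>0$ and $b_2+b_3>0$, the entries $v_4,v_5$ are then uniquely determined. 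Hence $R=C^{1/2}$, and so $X=T_xC^{1/2}=X_0$.

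The only point needing care is that the square root must be taken inside $H$, where positive diagonal entries are what make it unique.
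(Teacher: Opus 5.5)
Your proposal is correct and follows essentially the same route as the paper: you identify the Cholesky factor as $T_x(T_x^{-1}T_y)^{1/2}$, check the Riccati equation directly, and reduce uniqueness to $(T_x^{-1}X)^2=T_x^{-1}T_y$ in $H$. The only difference is that you prove uniqueness of square roots in $H$ explicitly from the entrywise equations in the proof of Lemma~\ref{29-april-1}, whereas the paper simply invokes uniqueness of the principal square root in $H$.
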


\begin{proof}
By the previous corollary,
$$
T_{x\cv y}=T_x\gm_H T_y=T_x(T_x^{-1}T_y)^{1/2}.
$$
Therefore
$$
T_{x\cv y}T_x^{-1}T_{x\cv y}=T_x(T_x^{-1}T_y)^{1/2}T_x^{-1}T_x(T_x^{-1}T_y)^{1/2}=T_y.
$$
Conversely, if $X\in H$ satisfies $XT_x^{-1}X=T_y$, then
$$
(T_x^{-1}X)^2=T_x^{-1}T_y.
$$
Since $T_x^{-1}X\in H$, uniqueness of the principal square root in $H$ gives
$$
T_x^{-1}X=(T_x^{-1}T_y)^{1/2},
$$
hence $X=T_x(T_x^{-1}T_y)^{1/2}=T_{x\cv y}$.
\end{proof}

\begin{remark}  
This interpretation connects the Cholesky-Vinberg mean directly with Choi and Lim's geometric mean on 
the group $\mathcal L_m$ of lower triangular $m\times m$ matrices with positive diagonal entries,
 where the operator formula, the integral formula, and the arithmetic-harmonic iteration coincide; see \cite{ChoiLim2022}. Since $H$ is a closed Lie subgroup of $\mathcal L_3$ and is stable under inversion, arithmetic and harmonic means, and principal powers, the iteration
$$
A_{k+1}=\frac{A_k+B_k}{2},
\qquad
B_{k+1}=\left(\frac{A_k^{-1}+B_k^{-1}}{2}\right)^{-1},
$$
started at $A_0=T_x$ and $B_0=T_y$, stays in $H$ and converges to $T_x\#_H T_y=T_{x\cv y}$.

\end{remark}

\begin{theorem}\label{thm-4-jul}
For $x,y\in\Omega$ and $0\le t\le1$, the path $\gamma^{\mathrm{cv}}_{x,y}(t)$ has the following properties.

\begin{itemize}
\item[(i)] Endpoints and reversal:
$$
\gamma^{\mathrm{cv}}_{x,y}(0)=x,\qquad \gamma^{\mathrm{cv}}_{x,y}(1)=y,\qquad \gamma^{\mathrm{cv}}_{y,x}(t)=\gamma^{\mathrm{cv}}_{x,y}(1-t).
$$
In particular $x\cv y=y\cv x$.

\item[(ii)] Positive homogeneity:
$$
\gamma^{\mathrm{cv}}_{\lambda x,\mu y}(t)=\lambda^{1-t}\mu^t\gamma^{\mathrm{cv}}_{x,y}(t),\qquad \lambda,\mu>0.
$$
Hence
$$
(\lambda x)\cv(\mu y)=\sqrt{\lambda\mu}\,(x\cv y).
$$

\item[(iii)] $H$-equivariance:
$$
\gamma^{\mathrm{cv}}_{hxh^{\top},hyh^{\top}}(t)=h\gamma^{\mathrm{cv}}_{x,y}(t)h^{\top},\qquad h\in H.
$$
In particular,
$$
(hxh^{\top})\cv(hyh^{\top})=h(x\cv y)h^{\top}.
$$

\item[(iv)] Determinant interpolation:
$$
\det\gamma^{\mathrm{cv}}_{x,y}(t)=\det(x)^{1-t}\det(y)^t.
$$
In particular,
$$
\det(x\cv y)=\sqrt{\det x\,\det y}.
$$

\end{itemize}
\end{theorem}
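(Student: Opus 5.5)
The plan is to work entirely at the level of Cholesky factors. By the definition and Corollary~\ref{Cor-Choi-Lim-Khalid}, $\gamma^{\mathrm{cv}}_{x,y}(t)=\Phi(T_x(T_x^{-1}T_y)^t)$, and uniqueness of the lower Cholesky factor in $H$ (Proposition on $\Phi$) lets us identify $T_{\Phi(A)}=A$ for $A\in H$. All four items then reduce to identities in $H$ for $C:=T_x^{-1}T_y$ and its principal powers $C^t=\exp(t\log C)\in H$ (Lemma~\ref{29-april-1}).

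For (i), $C^0=I$ and $C^1=C$ give the endpoints. For reversal, note $T_y^{-1}T_x=C^{-1}$ and $(C^{-1})^t=C^{-t}$, since $\log(C^{-1})=-\log C$ for the principal logarithm (spectrum in $(0,\infty)$). Then $T_y(C^{-1})^t=T_xC\,C^{-t}=T_xC^{1-t}$, because powers of $C$ commute as exponentials of multiples of $\log C$. Applying $\Phi$ gives $\gamma^{\mathrm{cv}}_{y,x}(t)=\gamma^{\mathrm{cv}}_{x,y}(1-t)$, and setting $t=1/2$ gives symmetry of the mean.

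For (ii), use $T_{\lambda x}=\sqrt\lambda\,T_x$, which lies in $H$ and satisfies the factorization. Then $T_{\lambda x}^{-1}T_{\mu y}=\sqrt{\mu/\lambda}\,C$. For a positive scalar $s$, $\log(sC)=(\log s)I+\log C$ since the two terms commute, so $(sC)^t=s^tC^t$. The factor becomes $\lambda^{(1-t)/2}\mu^{t/2}T_xC^t$, and $\Phi$ squares the scalar. For (iii), with $h\in H$, uniqueness gives $T_{hxh^\top}=hT_x$. Hence $T_{hxh^\top}^{-1}T_{hyh^\top}=T_x^{-1}h^{-1}hT_y=C$ is unchanged, the new factor is $hT_xC^t$, and $\Phi(hA)=h\Phi(A)h^\top$.

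For (iv), $\det\Phi(A)=(\det A)^2$ and $\det C^t=\det\exp(t\log C)=e^{t\operatorname{tr}\log C}=(\det C)^t$. Thus $\det\gamma(t)=(\det T_x)^2(\det T_y/\det T_x)^{2t}=\det(x)^{1-t}\det(y)^t$. The only step requiring care is the power identities for the principal power: $(C^{-1})^t=C^{-t}$, $C\,C^{-t}=C^{1-t}$, and $(sC)^t=s^tC^t$. I would justify these uniformly through $C^t=\exp(t\log C)$, with $\log$ well defined and commuting as stated because $C$ is triangular with positive diagonal. Everything else is bookkeeping.
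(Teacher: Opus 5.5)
Your proposal is correct and follows essentially the same argument as the paper's proof. Both work at the level of the Cholesky factors with $T_x^{-1}T_y$, use uniqueness of the factor to get $T_{\lambda x}=\sqrt{\lambda}\,T_x$ and $T_{hxh^{\top}}=hT_x$, and use $\det\Phi(T)=(\det T)^2$; your explicit justification of $(C^{-1})^t=C^{-t}$, $C\,C^{-t}=C^{1-t}$, $(sC)^t=s^tC^t$ and $\det C^t=(\det C)^t$ through $C^t=\exp(t\log C)$ only spells out steps the paper uses without comment.
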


\begin{proof} 
For brevity set $K=T_x^{-1}T_y$.

For (i), the endpoint identities are immediate,
$$
\gamma^{\mathrm{cv}}_{x,y}(0)=\Phi(T_x)=x \;\; \text{and}\; \;
\gamma^{\mathrm{cv}}_{x,y}(1)=\Phi(T_xK)=\Phi(T_y)=y.
$$
Also
$$
\gamma^{\mathrm{cv}}_{y,x}(t)=\Phi\bigl(T_y(T_y^{-1}T_x)^t\bigr)=\Phi\bigl(T_xK\,K^{-t}\bigr)=\Phi\bigl(T_xK^{1-t}\bigr)=\gamma^{\mathrm{cv}}_{x,y}(1-t).
$$
Evaluating at $t=1/2$ gives the symmetry of the midpoint.

For (ii), note that $T_{\lambda x}=\sqrt{\lambda}\,T_x$ and $T_{\mu y}=\sqrt{\mu}\,T_y$. Hence
$$
T_{\lambda x}^{-1}T_{\mu y}=\left(\frac{\mu}{\lambda}\right)^{1/2}K,
$$
so
$$
\begin{aligned}
\gamma^{\mathrm{cv}}_{\lambda x,\mu y}(t)&=\Phi\left(\sqrt{\lambda}\,T_x\left(\left(\frac{\mu}{\lambda}\right)^{1/2}K\right)^t\right)\\
&=\Phi\left(\lambda^{1/2}\left(\frac{\mu}{\lambda}\right)^{t/2}T_xK^t\right)\\
&=\lambda^{1-t}\mu^t\Phi(T_xK^t)=\lambda^{1-t}\mu^t\gamma^{\mathrm{cv}}_{x,y}(t).
\end{aligned}
$$
The midpoint identity follows by setting $t=1/2$.

For (iii), because $h,T_x\in H$, the Cholesky factor of $hxh^{\top}$ is $hT_x$, and similarly the Cholesky factor of $hyh^{\top}$ is $hT_y$. Therefore
$$
T_{hxh^{\top}}^{-1}T_{hyh^{\top}}=T_x^{-1}T_y=K,
$$
and thus
$$
\gamma^{\mathrm{cv}}_{hxh^{\top},hyh^{\top}}(t)=\Phi\bigl(hT_xK^t\bigr)=h\Phi(T_xK^t)h^{\top}=h\gamma^{\mathrm{cv}}_{x,y}(t)h^{\top}.
$$

For (iv), since $\det\Phi(T)=(\det T)^2$,
$$
\det\gamma^{\mathrm{cv}}_{x,y}(t)=\det(T_x)^2\det(K)^{2t}.
$$
Now $\det(T_x)^2=\det x$ and
$$
\det K=\frac{\det T_y}{\det T_x}=\left(\frac{\det y}{\det x}\right)^{1/2}.
$$
Hence
$$
\det\gamma^{\mathrm{cv}}_{x,y}(t)=\det(x)\left(\frac{\det y}{\det x}\right)^t=\det(x)^{1-t}\det(y)^t.
$$
In particular,
$$
\det(x\cv y)=\sqrt{\det x\,\det y}.
$$
\end{proof}

Next we prove the segment property of the weighted Cholesky-Vinberg mean.
\begin{proposition}\label{recursion}
For $x,y\in\Omega$ and $0\le t\le 1$, set
$$
x\cv_{t} y:=\gamma_{x,y}^{\mathrm{cv}}(t)=\Phi\bigl(T_x(T_x^{-1}T_y)^t\bigr).
$$
Then for every $0\le s\le t\le 1$ and every $0\le \lambda\le 1$,
$$
\bigl(x\cv_s y\bigr)\cv_\lambda\bigl(x\cv_t y\bigr)
=
x\cv_{(1-\lambda)s+\lambda t} y.
$$
In particular,
$$
\bigl(x\cv_s y\bigr)\cv\bigl(x\cv_t y\bigr)
=
x\cv_{(s+t)/2}y,
$$
where $\cv=\cv_{1/2}$.
Hence
$$
x\cv_t y=
\begin{cases}
x\cv\bigl(x\cv_{2t}y\bigr), & 0\le t\le \frac12,\\[1ex]
\bigl(x\cv_{2t-1}y\bigr)\cv y, & \frac12\le t\le 1.
\end{cases}
$$
\end{proposition}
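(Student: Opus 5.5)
The plan is to work entirely at the level of Cholesky factors, using that $\Phi$ is a bijection and that $T_{\Phi(T)}=T$ for $T\in H$ (uniqueness of the lower Cholesky factor). Set $K=T_x^{-1}T_y\in H$. By definition, the Cholesky factors of the two intermediate points are
$$
T_{x\cv_s y}=T_xK^s,\qquad T_{x\cv_t y}=T_xK^t .
$$
The idea is that everything reduces to the one-parameter subgroup $\{K^r\}$, where powers compose additively.

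First, compute the quotient entering the outer path:
$$
T_{x\cv_s y}^{-1}T_{x\cv_t y}=K^{-s}T_x^{-1}T_xK^t=K^{t-s}.
$$
Here I use that $K^r=\exp(r\log K)$ (Lemma~\ref{29-april-1}), so $K^{-s}K^t=K^{t-s}$. Then
$$
\bigl(x\cv_s y\bigr)\cv_\lambda\bigl(x\cv_t y\bigr)=\Phi\bigl(T_xK^s(K^{t-s})^\lambda\bigr).
$$
The only point needing care is the identity $(K^{t-s})^\lambda=K^{\lambda(t-s)}$. This is the main (mild) obstacle, because it requires the principal power of $K^{t-s}$ to be computed via $\log(K^{t-s})=(t-s)\log K$. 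That holds since $\log K\in\hLie$ has real spectrum (the diagonal entries are $\log$ of positive numbers), and $\exp$ is a bijection from $\hLie$ onto $H$ with the principal logarithm as inverse, as used in the proposition on $\ExpD$. Hence $\log\exp((t-s)\log K)=(t-s)\log K$. Combining exponents then gives $K^{s+\lambda(t-s)}=K^{(1-\lambda)s+\lambda t}$, and applying $\Phi$ yields $x\cv_{(1-\lambda)s+\lambda t}y$. Note that no ordering $s\le t$ is actually needed for this computation.

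The special case is $\lambda=1/2$. For the recursion formula with $0\le t\le 1/2$, take $s=0$ and replace $t$ by $2t\in[0,1]$, using $x\cv_0 y=x$ from Theorem~\ref{thm-4-jul}(i); this gives $x\cv(x\cv_{2t}y)=x\cv_t y$. For $1/2\le t\le 1$, take $s=2t-1$, $t'=1$ and $\lambda=1/2$, using $x\cv_1 y=y$; the weight becomes $\tfrac12(2t-1)+\tfrac12=t$.
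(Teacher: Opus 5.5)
Your proposal is correct and follows essentially the same route as the paper's proof: you pass to Cholesky factors, observe that the quotient is $K^{t-s}$, add exponents along the one-parameter subgroup $\{K^r\}$, and then specialize to $(s,t)=(0,2t)$ and $(2t-1,1)$. Your added justifications, namely $T_{\Phi(T)}=T$ and $(K^{t-s})^\lambda=K^{\lambda(t-s)}$ via $\log\circ\exp=\mathrm{id}$ on $\hLie$, make explicit two steps the paper uses tacitly.
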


\begin{proof}
Let
$
K:=T_x^{-1}T_y.
$ 
Then
$$
x\cv_s y=\Phi(T_xK^s),
\qquad
x\cv_t y=\Phi(T_xK^t).
$$
Therefore
$$
(T_xK^s)^{-1}(T_xK^t)=K^{t-s}.
$$
It follows that
$$
\bigl(x\cv_s y\bigr)\cv_\lambda\bigl(x\cv_t y\bigr)
=
\Phi\bigl(T_xK^s(K^{t-s})^\lambda\bigr)
=
\Phi\bigl(T_xK^{\,s+\lambda(t-s)}\bigr)
=
x\cv_{(1-\lambda)s+\lambda t} y.
$$
Taking $\lambda=\frac12$ gives the midpoint identity.
The last recursive formula follows by choosing $(s,t)=(0,2t)$ when $0\le t\le\frac12$
and $(s,t)=(2t-1,1)$ when $\frac12\le t\le1$.
\end{proof}

\section{The canonical Hessian metric}\label{Hessian}
We refer the reader to \cite{FarautKoranyi1994,Terras1988} for a detailed
account of the Riemannian symmetric space structure of $\Sym^{++}(3,\mathbb R)$. The
Riemannian  geometric notions used in this section, and later in
Sections~6 and~7, are standard; we follow the terminology and conventions of
\cite{John-Lee}.

We now compare the Cholesky-Vinberg construction with the canonical Riemannian geometry of the Vinberg cone. For the  Vinberg cone, the Koszul-Vinberg characteristic function is known explicitly,
$$
\varphi_{\Omega}(x):=\int_{\Omega^*} e^{-(x,y)}dy=c\,x_1^{1/2}x_2^{1/2}(\det x)^{-2},\qquad c>0,
$$
where $\Omega^*$ is the dual cone with respect to the trace pairing $(x, y)=\operatorname{tr}(x y)$. 
Hence the canonical Hessian metric is, see e.g. \cite{Ishi-Koufany},
\begin{equation}\label{hess-metric}
\cg_x^{\mathrm{can}}(u,v)=D_uD_v\log\varphi_{\Omega}(x)=-\frac12\left(\frac{u_1v_1}{x_1^2}+\frac{u_2v_2}{x_2^2}\right)+2\tr(x^{-1}ux^{-1}v).
\end{equation}
 
This metric is invariant under the automorphism group $G(\Omega)$ of $\Omega$, and hence in particular under the simply transitive subgroup $H$.

For comparison, on $\Sym^{++}(3,\R)$ the  invariant Riemannian metric is
$$
\cg_x^{\mathrm{SPD}}(u,v)=\operatorname{tr}(x^{-1}ux^{-1}v).
$$
Thus, after restriction to $\Omega$, one has
$$
\cg_x^{\mathrm{can}}(u,v)
=
2\,\cg_x^{\mathrm{SPD}}(u,v)
-\frac12\left(\frac{u_1v_1}{x_1^2}+\frac{u_2v_2}{x_2^2}\right).
$$
In particular, the canonical metric on $\Omega$ is not simply the restriction of the ambient SPD metric. The correction term is one manifestation of the non-symmetric nature of $\Omega$.

\begin{theorem}\label{thm:not-lc-geodesic-can}
The Cholesky-Vinberg path is generally not a geodesic for the Levi-Civita connection of the canonical Hessian metric $\cg^{\mathrm{can}}$.
\end{theorem}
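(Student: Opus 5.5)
The plan is to produce one explicit pair $x,y\in\Omega$ for which the Cholesky-Vinberg path fails the geodesic equation of $\cg^{\mathrm{can}}$ already at $t=0$.

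\emph{Reduction to the base point.} Since $\cg^{\mathrm{can}}$ is $H$-invariant, each $\rho(h)$ is an isometry and therefore maps Levi-Civita geodesics to Levi-Civita geodesics. By Theorem~\ref{thm-4-jul}(iii), $\rho(h)$ also maps Cholesky-Vinberg paths to Cholesky-Vinberg paths. So it suffices to take $x=e$ and $T_x=I_3$. The path is then
$$
\gamma(t)=\Phi(K^t)=\exp(tX)\exp(tX)^\top,\qquad X=\log K\in\hLie .
$$
I would pick the simplest non-diagonal $X$, namely $X=E_{31}$ (only $u_4=1$). Then $X^2=0$ and
$$
\gamma(t)=(I+tE_{31})(I+tE_{13})=I+t(E_{31}+E_{13})+t^2E_{33},
$$
which corresponds to $y=\gamma(1)$ with $y_1=y_2=1$, $y_4=1$, $y_5=0$, $y_3=2$. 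Hence $\gamma'(0)=u:=E_{31}+E_{13}$ and $\gamma''(0)=2E_{33}$.

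\emph{Geodesic equation for a Hessian metric.} In the flat affine coordinates of $V$ we have $\cg^{\mathrm{can}}=D^2\psi$ with
$$
\psi=\log\varphi_\Omega=\tfrac12\log x_1+\tfrac12\log x_2-2\log\det x+\text{const}.
$$
For a Hessian metric the Christoffel symbols satisfy $\cg(\Gamma(u,u),w)=\tfrac12D^3\psi(u,u,w)$. A geodesic must therefore satisfy
$$
\cg_\gamma(\gamma'',w)+\tfrac12D^3\psi_\gamma(\gamma',\gamma',w)=0\qquad\text{for all } w\in V.
$$
I evaluate this at $t=0$, where $x=e$, and compute the two terms separately.

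First, $D^3(-2\log\det)(u,u,w)=-4\tr(x^{-1}ux^{-1}ux^{-1}w)$. The $\log x_1$ and $\log x_2$ terms contribute multiples of $u_1^2w_1$ and $u_2^2w_2$, and these vanish because $u_1=u_2=0$. Since $u^2=E_{11}+E_{33}$, the cubic term is
$$
\tfrac12D^3\psi_e(u,u,w)=-2(w_1+w_3).
$$
Second, from \eqref{hess-metric}, and because $\gamma''(0)$ has no $(1,1)$ or $(2,2)$ entries,
$$
\cg_e(2E_{33},w)=2\tr(2E_{33}w)=4w_3.
$$
The sum is $2w_3-2w_1$. Taking $w=E_{33}$ gives $2\neq0$, so the geodesic equation fails at $t=0$ and $\gamma$ is not a Levi-Civita geodesic. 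By the reduction above, neither are its $H$-translates.

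\emph{Main obstacle.} The only delicate point is getting the constants and sign in the third-derivative formula right, namely the factor $\tfrac12$ in the Hessian Christoffel identity and the $-2$ from differentiating $x^{-1}$. The counterexample is robust to these details: any nonzero constant multiple of the two contributions still leaves a nonzero residual proportional to $w_3$ versus $w_1$. As a cross-check, I would redo the computation using the Christoffel symbols of the ambient term $2\tr(x^{-1}ux^{-1}v)$ alone, which are known to be $\Gamma(u,u)=-ux^{-1}u$, and then add the diagonal correction, which vanishes here.
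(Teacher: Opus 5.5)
Your proof is correct and follows the paper's argument: you use the same curve $\Phi(\exp(tE_{31}))$ through $e$, with the geodesic equation failing at $t=0$. The only difference is that you get the Christoffel term from the Hessian identity $\cg(\Gamma(u,u),w)=\tfrac12 D^3\psi(u,u,w)$ and pair with $w=E_{33}$, instead of computing $\Gamma^1_{44}(e)=-\tfrac43$ from metric derivatives; your residual $2w_3-2w_1$ is consistent with that value.

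Your side cross-check needs a correction. Second-kind Christoffel symbols are not additive in the metric, so the ambient $-ux^{-1}u$ can be combined with the vanishing correction only after lowering each with its own metric term, as in your main computation. Adding them directly would give $\Gamma^1_{44}(e)=-1$ rather than $-\tfrac43$.
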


\begin{proof}
It is enough to produce one counterexample. Take
$$
A=\begin{pmatrix}0&0&0\\0&0&0\\a&0&0\end{pmatrix}\in\mathfrak h,\qquad a\ne0,
$$
and consider the Cholesky-Vinberg path starting from the identity,
$
\gamma(t)=\Phi(\exp(tA)).
$
Since $A^2=0$, we have $\exp(tA)=I+tA$, so
$$
\gamma(t)=\begin{pmatrix}1&0&at\\0&1&0\\at&0&1+a^2t^2\end{pmatrix}.
$$
In the clan coordinates $(x_1,x_2,x_3,x_4,x_5)$ on $V$,  
$$
x_1(t)=1,\qquad x_2(t)=1,\qquad x_3(t)=1+a^2t^2,\qquad x_4(t)=at,\qquad x_5(t)=0,
$$
we have
$$
\dot x_1(0)=\dot x_2(0)=\dot x_3(0)=\dot x_5(0)=0,\qquad \dot x_4(0)=a,
$$
and
$$
\ddot x_1(0)=\ddot x_2(0)=\ddot x_4(0)=\ddot x_5(0)=0,\qquad \ddot x_3(0)=2a^2.
$$

We now compute the single Christoffel symbol $\Gamma^1_{44}$ that will matter. From the metric formula one finds
$$
\cg_{11}(e)=\frac32,\qquad \partial_4\cg_{14}(e)=-4,\qquad \partial_1\cg_{44}(e)=-4,
$$
so, because $\cg^{11}(e)=2/3$ and $\cg_{14}(e)=0$,
$$
\Gamma^1_{44}(e)=\frac12\cg^{11}(e)\bigl(2\partial_4\cg_{14}(e)-\partial_1\cg_{44}(e)\bigr)=-\frac43.
$$
Therefore the first coordinate of the geodesic equation, $\nabla_{\dot{\gamma}(t)} \dot{\gamma}(t)=0$, at $t=0$ becomes
$$
\ddot x_1(0)+\Gamma^1_{44}(e)\dot x_4(0)^2=0-\frac43a^2\ne0.
$$
Thus $\gamma$ is not a geodesic for the Levi-Civita connection of $\cg^{\mathrm{can}}$.
\end{proof}

\begin{corollary}\label{cor:cv-not-midpoint}
In general, the Cholesky-Vinberg mean $x\cv y$ is not the Riemannian midpoint of $x$ and $y$
for the canonical Hessian metric.  
\end{corollary}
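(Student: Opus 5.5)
Since $\cg^{\mathrm{can}}$ is the Hessian metric of a homogeneous cone, it is a complete Riemannian metric of nonpositive curvature (in fact $(\Omega,\cg^{\mathrm{can}})$ is isometric to the solvable group $H$ with a left-invariant metric, and the Koszul metric on a homogeneous convex cone is a Hadamard manifold). Hence any two points $x,y$ are joined by a unique geodesic, and the Riemannian midpoint is the point of that geodesic at parameter $1/2$. I would argue by contradiction. Suppose that for every $x,y\in\Omega$ the mean $x\cv y$ equals the Riemannian midpoint $m(x,y)$. The segment property of Proposition~\ref{recursion} then gives, for dyadic $t=k/2^n$, that $x\cv_t y$ is obtained by iterated Cholesky-Vinberg midpoints of points on the path. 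By the same iteration, these points are iterated Riemannian midpoints, so they lie on the Levi-Civita geodesic from $x$ to $y$ at parameter $t$. Density of dyadics and continuity of both curves in $t$ would force $\gamma^{\mathrm{cv}}_{x,y}$ to be the affinely parametrized geodesic, contradicting Theorem~\ref{thm:not-lc-geodesic-can}.

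The key steps are as follows. First, justify uniqueness of geodesics, so that midpoints are well defined and unique. Either quote Hadamard-ness of the canonical metric, or use $H$-invariance to transport the problem to $H$ with a left-invariant metric on a completely solvable group (exponential type, which is diffeomorphic to $\R^5$ with nonpositive curvature). Second, carry out the induction on $n$. If $z_s=\gamma(s)$ and $z_t=\gamma(t)$ lie on the unique geodesic $\sigma$ from $x$ to $y$ with $\sigma(s)=z_s$ and $\sigma(t)=z_t$, then the Riemannian midpoint of $z_s,z_t$ is $\sigma((s+t)/2)$, because the restriction of $\sigma$ is the unique geodesic between them. The hypothesis together with Proposition~\ref{recursion} identifies this midpoint with $x\cv_{(s+t)/2}y$. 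Third, pass to the limit by continuity to conclude that $\gamma^{\mathrm{cv}}_{x,y}=\sigma$ on $[0,1]$. Applying this to $x=e$ and $y=\Phi(I+A)$ from the proof of Theorem~\ref{thm:not-lc-geodesic-can} gives the contradiction.

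Alternatively, one could avoid the uniqueness issue by exhibiting an explicit pair. Take $x=e$ and $y=\gamma(2\epsilon)$ with the path from Theorem~\ref{thm:not-lc-geodesic-can}. The midpoint is then $\gamma(\epsilon)$, whereas the Riemannian midpoint differs at second order in $\epsilon$, by $\tfrac{1}{2}\epsilon^2$ times the nonzero acceleration defect computed there ($\Gamma^1_{44}(e)a^2\ne 0$ in the $x_1$ component). Since $x_1(\gamma(\epsilon))=1$ exactly while the Riemannian midpoint has $x_1$ component $1+c\epsilon^2+O(\epsilon^3)$ with $c\neq0$, they differ for small $\epsilon$. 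The main obstacle in this approach is controlling the Taylor expansion of the Riemannian midpoint rigorously: one would write the midpoint as $\exp_x(\tfrac12\exp_x^{-1}y)$ and expand. For that reason I would prefer the first, global argument, whose only delicate point is citing nonpositive curvature and completeness for uniqueness of geodesics.
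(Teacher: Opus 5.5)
Your argument has the same core as the paper's proof. It uses the dyadic induction driven by Proposition~\ref{recursion}, the density-and-continuity limit, and the contradiction with the failure of the geodesic equation at $t=0$ from Theorem~\ref{thm:not-lc-geodesic-can}. You diverge in how you make Riemannian midpoints well defined and guarantee that a sub-segment of $\sigma$ is the geodesic between its endpoints. That step has a gap. You need $(\Omega,\cg^{\mathrm{can}})$ to be globally uniquely geodesic, and neither reason you give establishes this. ``The canonical metric of a homogeneous convex cone is Hadamard'' is asserted, not a theorem you can quote. Your parenthetical reasoning is false as a principle: complete solvability, exponential type, or being diffeomorphic to $\R^5$ gives neither nonpositive curvature nor uniqueness of geodesics. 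The Heisenberg group is nilpotent, yet by Milnor's theorem on nilpotent groups it admits no left-invariant metric of nonpositive curvature.

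There are two ways to close the gap. The paper's way is to localize. It fixes a strongly convex normal neighbourhood $U$ of $e$, takes $y=\gamma(\varepsilon)$ with $\gamma([0,\varepsilon])\subset U$, and assumes $p\cv q$ equals the Riemannian midpoint only for $p,q\in U$. Sub-segments of the minimizing geodesic $\sigma$ are then the unique minimizing geodesics between their endpoints, so the induction runs verbatim. No curvature information is needed, and the conclusion is slightly stronger: disagreement already occurs for pairs arbitrarily close to $e$.

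Alternatively, you can keep your global version by proving nonpositive curvature, which does hold for this cone. By Proposition~\ref{prop:cv-can-metric}, $\Phi^*\cg^{\mathrm{can}}$ is the left-invariant metric equal to $Q_{\hLie}$ at $I$. Write $\hLie=\mathfrak a\oplus\mathfrak n$, with $\mathfrak a$ the diagonal matrices $D=\operatorname{diag}(\alpha_1,\alpha_2,\alpha_3)$ and $\mathfrak n$ the span of the matrix units $E_{31},E_{32}$. These are $Q_{\hLie}$-orthogonal abelian subalgebras, and $\operatorname{ad}D|_{\mathfrak n}$ is diagonal with weights $\lambda_4(D)=\alpha_3-\alpha_1$ and $\lambda_5(D)=\alpha_3-\alpha_2$. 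Take $X=D+\xi$ and $Y=D'+\eta$, with $\xi,\eta$ written in the orthonormal basis $\tfrac12E_{31},\tfrac12E_{32}$. The Koszul formula then gives
\[
\langle R(X,Y)Y,X\rangle=-\bigl|[X,Y]\bigr|^2-(\xi_4\eta_5-\xi_5\eta_4)^2\,\langle\Lambda_4,\Lambda_5\rangle,
\qquad \langle\Lambda_4,\Lambda_5\rangle=\tfrac18 ,
\]
where $\Lambda_j\in\mathfrak a$ is the $Q_{\hLie}$-dual of $\lambda_j$ and $Q_{\hLie}|_{\mathfrak a}=6\alpha_1^2+6\alpha_2^2+8\alpha_3^2$. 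So the curvature is $\le 0$, Cartan--Hadamard applies, and your argument with $y=\Phi(I+A)$ goes through.

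Your Taylor-expansion alternative is also correct in substance. The Riemannian midpoint of $e$ and $\gamma(2\epsilon)$ differs from $\gamma(\epsilon)$ by $\tfrac12\epsilon^2\bigl(\ddot\gamma(0)+\Gamma(\dot\gamma(0),\dot\gamma(0))\bigr)+O(\epsilon^3)$. Its $x_1$-component is $-\tfrac23a^2\epsilon^2+O(\epsilon^3)$.
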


\begin{proof}
Take the matrix $A\in\mathfrak h$ constructed in the proof of
Theorem~\ref{thm:not-lc-geodesic-can}, and put
$$
\gamma(t)=\Phi(\exp(tA)).
$$
For this curve, the Levi-Civita geodesic equation fails at
$t=0$. Choose a strongly convex normal neighborhood $U$ of $e$
and $\varepsilon>0$ such that $\gamma([0,\varepsilon])\subset U$.
Put $y=\gamma(\varepsilon)$. Then
$$
\gamma^{\mathrm{cv}}_{e,y}(t)=\gamma(\varepsilon t),
\qquad 0\le t\le1.
$$

Suppose, for a contradiction, that for every $p,q\in U$,
the Cholesky-Vinberg mean $p\cv q$ agrees with the midpoint
of the unique minimizing Levi-Civita geodesic from $p$ to $q$.
Let $\sigma:[0,1]\to U$ be the affinely parametrized minimizing
Levi-Civita geodesic from $e$ to $y$.

We prove by induction on $n$ that
$$
\gamma^{\mathrm{cv}}_{e,y}\left(\frac{k}{2^n}\right)
=
\sigma\left(\frac{k}{2^n}\right),
\qquad 0\le k\le2^n.
$$
For $n=0$, this follows from the common endpoints.
Suppose the assertion holds at level $n$.
At level $n+1$, the even indices follow immediately from the
induction hypothesis. For an odd index $2j+1$, with
$0\le j<2^n$, Proposition~\ref{recursion} gives
$$
\gamma^{\mathrm{cv}}_{e,y}
\left(\frac{2j+1}{2^{n+1}}\right)
=
\gamma^{\mathrm{cv}}_{e,y}\left(\frac{j}{2^n}\right)
\cv
\gamma^{\mathrm{cv}}_{e,y}\left(\frac{j+1}{2^n}\right).
$$
By the induction hypothesis, the two points on the right are
$\sigma(j/2^n)$ and $\sigma((j+1)/2^n)$.
They belong to $U$, so our assumption identifies their
Cholesky-Vinberg mean with their Riemannian midpoint.
Since $\sigma$ is affinely parametrized and minimizing,
this midpoint is
$$
\sigma\left(\frac{2j+1}{2^{n+1}}\right).
$$
This completes the induction.

The dyadic points are dense in $[0,1]$, so continuity gives
$\gamma^{\mathrm{cv}}_{e,y}=\sigma$.
Thus $\gamma(\varepsilon t)$ is a Levi-Civita geodesic,
contradicting the failure of the geodesic equation for
$\gamma$ at $t=0$. Therefore the two midpoint constructions
cannot agree for every pair in $U$.
\end{proof}

The canonical and ambient metrics therefore do not supply the desired geodesic interpretation. 
This suggests looking for a geometry adapted to the homogeneous group action.

Such structures are naturally provided by the simply transitive solvable group attached to the
cone and by the underlying left-symmetric  algebra, or clan. This aligns closely with the work 
  of Koszul, Milnor, and Vinberg on flat affine, left-invariant
geometries, see \cite{Shima2007,Shima1976,Burde2006, Milnor, Koszul,Vinberg1963}. In our case, these constructions lead to three affine connections as we will see in the following sections. Two of them are
flat connections with torsion, naturally related to the Cholesky-Vinberg path, while the third
is a torsion-free flat logarithmic connection associated with another mean, called  Log-Vinberg mean.

\section{Torsionful flat affine geometries}\label{affine}

In this section we describe two flat connections with torsion and the metrics naturally attached to them.

\subsection{The clan inner product and the differential of the orbit map}

Recall that for
$$
u=
\begin{pmatrix}u_1&0&u_4\\0&u_2&u_5\\u_4&u_5&u_3\end{pmatrix}\in V
$$
we write
$$
\underline u=
\begin{pmatrix}u_1/2&0&0\\0&u_2/2&0\\u_4&u_5&u_3/2\end{pmatrix},
$$
and the clan product is
$$
u\Delta w=\underline u\,w+w\,\underline u^{\top}.
$$

\begin{definition}
The canonical clan inner product on $V$ is
$$
Q_{\Delta}(u,v):=\tr\bigl(L_{u\Delta v}\bigr),\qquad u,v\in V,
$$
where
$
L_u:V\to V$ is the left multiplication operator $L_u(w)=u\Delta w$.

We transport it to $\hLie$ by
\begin{equation}\label{def-Q_h}
Q_{\hLie}(A,B):=Q_{\Delta}(A+A^{\top},B+B^{\top}),\qquad A,B\in\hLie.
\end{equation}
\end{definition}

\begin{proposition}\label{prop:QDelta-formula}
For
$$
u=
\begin{pmatrix}u_1&0&u_4\\0&u_2&u_5\\u_4&u_5&u_3\end{pmatrix},
\qquad
v=
\begin{pmatrix}v_1&0&v_4\\0&v_2&v_5\\v_4&v_5&v_3\end{pmatrix},
$$
one has
\begin{equation}\label{Qh}
Q_{\Delta}(u,v)=\frac32u_1v_1+\frac32u_2v_2+2u_3v_3+4u_4v_4+4u_5v_5.
\end{equation}
Moreover,
$$
Q_{\Delta}(u,v)=\cg^{\mathrm{can}}_e(u,v),
$$
where $\cg^{\mathrm{can}}$ is the canonical Hessian metric \eqref{hess-metric} of the cone.
\end{proposition}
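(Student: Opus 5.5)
The plan is a direct computation in two steps. First I find a closed formula for the linear form $w\mapsto\tr L_w$ on $V$. Then I evaluate it at $w=u\Delta v$, which only requires the diagonal entries of $u\Delta v$. For the second assertion I compare the result with \eqref{hess-metric} at $x=e=I_3$.

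\emph{Step 1: the trace of $L_w$.} I split $\underline w=D+N$, where $D=\diag(w_1/2,w_2/2,w_3/2)$ and $N=w_4E_{31}+w_5E_{32}$ is strictly lower triangular. I use the coordinate basis of $V$ associated with $y_1,\dots,y_5$. The diagonal part acts by $y\mapsto Dy+yD$, which multiplies the $(i,j)$ entry by $d_i+d_j$. So it is diagonal in this basis, with eigenvalues $w_1,\ w_2,\ w_3,\ (w_1+w_3)/2,\ (w_2+w_3)/2$ on $y_1,\dots,y_5$ respectively. The part $y\mapsto Ny+yN^{\top}$ only adds to row/column $3$ combinations of rows/columns $1,2$. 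Hence it sends each coordinate $y_i$ into other coordinates, strictly ``down'' in the order $(y_1,y_2)\to(y_4,y_5)\to y_3$. It is therefore nilpotent, and in particular traceless. Summing the diagonal contributions gives
$$
\tr L_w=\tfrac32 w_1+\tfrac32 w_2+2w_3 .
$$
This is the step where care is needed: I must check that the $N$-part really contributes no diagonal coefficient in the chosen basis. I would verify this by writing out $Ny+yN^{\top}$ explicitly.

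\emph{Step 2: the diagonal of $u\Delta v$.} From $u\Delta v=\underline u\,v+v\,\underline u^{\top}$, the $(i,i)$ entry equals $2(\underline u\,v)_{ii}$. The first two rows of $\underline u$ are diagonal, which gives $(u\Delta v)_{11}=u_1v_1$ and $(u\Delta v)_{22}=u_2v_2$. Row $3$ of $\underline u$ is $(u_4,u_5,u_3/2)$, so
$$
(u\Delta v)_{33}=2\bigl(u_4v_4+u_5v_5+\tfrac12u_3v_3\bigr).
$$
Substituting these three entries into the formula from Step 1 yields \eqref{Qh}.

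\emph{Step 3: comparison with the Hessian metric.} At $x=e$, formula \eqref{hess-metric} reads $\cg^{\mathrm{can}}_e(u,v)=-\tfrac12(u_1v_1+u_2v_2)+2\tr(uv)$. The sparsity pattern of $V$ gives
$$
\tr(uv)=u_1v_1+u_2v_2+u_3v_3+2u_4v_4+2u_5v_5 .
$$
Combining the two expressions gives coefficients $\tfrac32,\tfrac32,2,4,4$, which agree with \eqref{Qh}. Both Step 2 and Step 3 are routine bookkeeping once Step 1 is settled.
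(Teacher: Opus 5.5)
Your proposal is correct and follows essentially the same route as the paper: both compute $\tr L_w=\frac32w_1+\frac32w_2+2w_3$ from the action of $L_w$ on the coordinate basis, read off the diagonal entries of $u\Delta v$, substitute, and compare with \eqref{hess-metric} at $e$. Your $D+N$ splitting is simply a tidier organization of the paper's explicit list of the $L_u(E_i)$. In that list the only off-diagonal contributions are $u_4E_4$, $u_5E_5$, $2u_4E_3$ and $2u_5E_3$, which confirms the nilpotency claim you flagged for checking.
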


\begin{proof}
Let $E_1,\dots,E_5$ be the coordinate basis of $V$. A direct computation gives
$$
L_u(E_1)=u_1E_1+u_4E_4,
\qquad
L_u(E_2)=u_2E_2+u_5E_5,
\qquad
L_u(E_3)=u_3E_3,
$$
$$
L_u(E_4)=2u_4E_3+\frac{u_1+u_3}{2}E_4,
\qquad
L_u(E_5)=2u_5E_3+\frac{u_2+u_3}{2}E_5.
$$
Hence
$$
\tr(L_u)=\frac32u_1+\frac32u_2+2u_3.
$$
Now the first, second, and third coordinates of $u\Delta v$ are
$$
(u\Delta v)_1=u_1v_1,
\qquad
(u\Delta v)_2=u_2v_2,
\qquad
(u\Delta v)_3=u_3v_3+2u_4v_4+2u_5v_5.
$$
Therefore
$$
Q_{\Delta}(u,v)=\tr(L_{u\Delta v})=\frac32(u\Delta v)_1+\frac32(u\Delta v)_2+2(u\Delta v)_3,
$$
which is the desired  formula.

To compare with the canonical metric, recall its explicit formula,
$$
\cg^{\mathrm{can}}_x(a,b)=-\frac12\left(\frac{a_1b_1}{x_1^2}+\frac{a_2b_2}{x_2^2}\right)+2\tr(x^{-1}ax^{-1}b).
$$
At the identity $e=I_3$, this becomes
$$
\begin{aligned}
\cg^{\mathrm{can}}_e(u,v)&=-\frac12(u_1v_1+u_2v_2)+2\tr(uv)\\
&=\frac32u_1v_1+\frac32u_2v_2+2u_3v_3+4u_4v_4+4u_5v_5\\
&=Q_{\Delta}(u,v).
\end{aligned}
$$
 \end{proof}

For later use we study the differential of the orbit map. For $T\in H$ and $U\in T_TH\simeq \hLie$, the differential of
$
\Phi:H\to\Omega$, $\Phi(T)=TT^{\top}
$
is
$$
d\Phi_T(U)=UT^{\top}+TU^{\top}.
$$
Indeed,
$$
\Phi(T+\varepsilon U)=TT^{\top}+\varepsilon\bigl(UT^{\top}+TU^{\top}\bigr)+O(\varepsilon^2).
$$

\begin{proposition}\label{diff-inverse}
Let $x=\Phi(T)\in\Omega$ and let $u\in T_x\Omega\simeq V$. Then
\begin{equation}\label{diff-i}
(d\Phi_T)^{-1}(u)=T\,\underline{\,T^{-1}uT^{-\top}\,}.
\end{equation}
Explicitly, if
$$
T=\begin{pmatrix}a_1&0&0\\0&a_2&0\\u_4&u_5&a_3\end{pmatrix}
\; \text{ and }\; 
u=\begin{pmatrix}y_1&0&y_4\\0&y_2&y_5\\y_4&y_5&y_3\end{pmatrix},
$$
then the unique matrix $A=(d\Phi_T)^{-1}(u)\in\hLie$ is
$$
A=\begin{pmatrix}\alpha_1&0&0\\0&\alpha_2&0\\\beta_4&\beta_5&\alpha_3\end{pmatrix}
$$
with
$$
\alpha_1=\frac{y_1}{2a_1},\qquad
\alpha_2=\frac{y_2}{2a_2},\qquad
\alpha_3=\frac{1}{a_3}\left(\frac{y_3}{2}-u_4\beta_4-u_5\beta_5\right),
$$
$$
\beta_4=\frac{y_4-u_4\alpha_1}{a_1},\qquad
\beta_5=\frac{y_5-u_5\alpha_2}{a_2}.
$$
\end{proposition}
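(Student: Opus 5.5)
The plan is to prove formula \eqref{diff-i} by a direct check that the proposed matrix is a preimage. Injectivity then follows because $\Phi$ is a diffeomorphism, so $d\Phi_T$ is a linear isomorphism $\hLie\to V$.

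Set $A:=T\,\underline{T^{-1}uT^{-\top}}$. First I would check that $w:=T^{-1}uT^{-\top}$ lies in $V$. Congruence by any element of $H$ preserves $V$, as in the $H$-action on $\Omega$, and $T^{-1}\in H$; hence $\underline w\in\hLie$ is defined, and $A\in\hLie$ because $\hLie$ is closed under left multiplication by $H$. Then I would compute, using $w=\underline w+\underline w^{\top}$,
$$
d\Phi_T(A)=AT^{\top}+TA^{\top}=T\underline w T^{\top}+T\underline w^{\top}T^{\top}=TwT^{\top}=u.
$$
This gives \eqref{diff-i}.

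For the explicit entries, I would not expand $T^{-1}uT^{-\top}$. Instead I would solve the linear system $AT^{\top}+TA^{\top}=u$ entry by entry, with $A$ lower triangular of the stated shape; by uniqueness this recovers the same $A$. Writing $M=AT^{\top}$, the equation reads $M+M^{\top}=u$. The relevant entries of $M$ are
$$
M_{11}=\alpha_1a_1,\qquad M_{22}=\alpha_2a_2,\qquad M_{31}=\beta_4a_1,\qquad M_{13}=\alpha_1u_4,
$$
$$
M_{32}=\beta_5a_2,\qquad M_{23}=\alpha_2u_5,\qquad M_{33}=\beta_4u_4+\beta_5u_5+\alpha_3a_3.
$$
The $(1,2)$ entry vanishes automatically. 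The diagonal $(1,1)$ and $(2,2)$ equations give $\alpha_1,\alpha_2$. The $(1,3)$ and $(2,3)$ equations, $a_1\beta_4+u_4\alpha_1=y_4$ and $a_2\beta_5+u_5\alpha_2=y_5$, give $\beta_4,\beta_5$. Finally, the $(3,3)$ equation $2(u_4\beta_4+u_5\beta_5+a_3\alpha_3)=y_3$ gives $\alpha_3$.

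The system is triangular, so no real obstacle arises. The only point needing care is the bookkeeping of the $(3,3)$ entry.
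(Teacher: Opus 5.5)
Your proposal is correct and is essentially the paper's argument. The paper substitutes $B=T^{-1}A$ to turn $AT^{\top}+TA^{\top}=u$ into $B+B^{\top}=T^{-1}uT^{-\top}$ and reads off $B$ via $\iota$, so uniqueness comes from the bijectivity of symmetrization $\hLie\to V$ rather than from $\Phi$ being a diffeomorphism. Your triangular forward substitution for the explicit entries is exactly the ``writing out'' the paper leaves to the reader, and your entries of $AT^{\top}$ are correct.
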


\begin{proof}
The identity $d\Phi_T(U)=UT^{\top}+TU^{\top}$ shows that solving $d\Phi_T(A)=u$ amounts to solving
$$
AT^{\top}+TA^{\top}=u.
$$
Set $B=T^{-1}A\in\hLie$. Then $A=TB$, and the equation becomes
$$
B+B^{\top}=T^{-1}uT^{-\top}.
$$
Since $T^{-1}uT^{-\top}\in V$ and $\iota:v\mapsto \underline v$ is the inverse of the symmetrization map $B\mapsto B+B^{\top}$ from $\hLie$ onto $V$, we get
$$
B=\underline{\,T^{-1}uT^{-\top}\,},
$$
which proves the formula \eqref{diff-i}. The coordinate expressions follow by writing out this identity explicitly.
\end{proof}

For $A\in\hLie$, let
$$
E_A^{\ell}(T)=TA,
\qquad
E_A^{r}(T)=AT
$$
denote the left-invariant and right-invariant vector fields on $H$. Their pushforwards under $\Phi$ are the following frames on $\Omega$,
$$
d\Phi_T\bigl(E_A^{\ell}(T)\bigr)=T(A+A^{\top})T^{\top}
$$
and
$$
d\Phi_T\bigl(E_A^{r}(T)\bigr)=A\Phi(T)+\Phi(T)A^{\top}.
$$
If $v\in V\simeq T_e\Omega$, then the clan fundamental vector field is
$$
X_v(x)=\underline v\,x+x\,\underline v^{\top},\qquad x\in\Omega.
$$
Hence
$$
X_v\bigl(\Phi(T)\bigr)=d\Phi_T\bigl(E^r_{\underline v}(T)\bigr).
$$
So the clan frame on $\Omega$ is the pushforward of the right-invariant frame on $H$.

\subsection{The Cholesky flat connection and its metric}

We use the standard connection defined by the left-invariant parallelism on $H$.

 \begin{proposition}\label{lft-in-met-H}
There is a unique left-invariant affine connection $\nabla^{H,\ell}$ on $H$ such that
$$
\nabla^{H,\ell}{}_{E_A^{\ell}}E_B^{\ell}=0,\qquad A,B\in\hLie.
$$
This connection is flat, and its torsion is
$$
\operatorname{Tor}^{\nabla^{H,\ell}}\bigl(E_A^{\ell},E_B^{\ell}\bigr)=-E_{[A,B]}^{\ell}.
$$
Its geodesics are exactly the curves
$$
\Gamma_{T_0,A}^{\ell}(t)=T_0e^{tA},\qquad T_0\in H,\quad A\in\hLie.
$$
\end{proposition}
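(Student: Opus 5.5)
The plan is to build the connection directly from the global left-invariant frame and then read off torsion, curvature and geodesics from the frame relations. Fix a basis $A_1,\dots,A_5$ of $\hLie$. The fields $E^{\ell}_{A_i}$ form a global frame of $TH$, because left translation $T\mapsto TA$ is a linear isomorphism $\hLie\to T_TH$ at every $T$. Any vector field $Y$ can then be written uniquely as $Y=\sum_i f_iE^{\ell}_{A_i}$ with smooth $f_i$. Requiring $\nabla E^{\ell}_B=0$ for all $B$, together with the Leibniz rule, forces
$$
\nabla_XY=\sum_i (Xf_i)\,E^{\ell}_{A_i}.
$$
This gives uniqueness. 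For existence, I would check directly that this formula is $C^\infty$-linear in $X$, satisfies Leibniz in $Y$, and is independent of the chosen basis (a change of basis has constant coefficients). Left invariance follows because $d(L_g)E^{\ell}_A=E^{\ell}_A$, so $L_g$ maps the frame to itself and preserves the defining condition.

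For flatness, I would compute $R(E^\ell_A,E^\ell_B)E^\ell_C=\nabla\nabla E^\ell_C-\nabla\nabla E^\ell_C-\nabla_{[E^\ell_A,E^\ell_B]}E^\ell_C=0$, since every term involves $\nabla E^\ell_C=0$. Tensoriality of $R$ then gives $R\equiv0$. For the torsion,
$$
\operatorname{Tor}(E^\ell_A,E^\ell_B)=-[E^\ell_A,E^\ell_B].
$$
The one point needing care is the Lie bracket of left-invariant fields in this matrix realization. Viewing $E^\ell_A(T)=TA$ as a linear vector field on the matrix space, the bracket is
$$
[X,Y](T)=dY_T(X(T))-dX_T(Y(T))=TAB-TBA=E^\ell_{[A,B]}(T).
$$
This yields $\operatorname{Tor}=-E^\ell_{[A,B]}$ with the stated sign. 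The main obstacle is keeping this sign convention straight.

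For the geodesics, let $\gamma(t)$ be a curve and write $\dot\gamma(t)=\gamma(t)C(t)$, so that $C(t)=\gamma^{-1}\dot\gamma\in\hLie$. In the frame, the coefficients of $\dot\gamma$ are those of $C(t)$, so by the formula above $\nabla_{\dot\gamma}\dot\gamma=E^\ell_{\dot C(t)}$. Hence $\gamma$ is a geodesic iff $C$ is constant, equal to some $A$. The equation $\dot\gamma=\gamma A$ with $\gamma(0)=T_0$ has the unique solution $T_0e^{tA}$, which lies in $H$ because $e^{tA}\in H$. Conversely, every such curve has constant $C=A$ and is therefore a geodesic.
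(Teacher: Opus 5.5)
Your proposal is correct and follows essentially the same route as the paper: you define $\nabla^{H,\ell}$ by differentiating coefficients in the left-invariant frame, read off flatness and the torsion from $[E^\ell_A,E^\ell_B]=E^\ell_{[A,B]}$, and characterize geodesics by constancy of $\gamma^{-1}\dot\gamma$. The only difference is that you also verify explicitly the bracket identity (with its sign), the basis independence and the left invariance, which the paper takes for granted.
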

 
\begin{proof}
Choose a basis $A_1,\dots,A_5$ of $\hLie$ and write $E_i=E_{A_i}^{\ell}$. If $Y=\sum_jg_jE_j$, set
$$
\nabla^{H,\ell}{}_X Y:=\sum_{j=1}^5 X(g_j)\,E_j.
$$
This formula proves existence and uniqueness. The frame is parallel, hence the connection is left-invariant and flat. Since $[E_A^{\ell},E_B^{\ell}]=E_{[A,B]}^{\ell}$, its torsion is
$$
\operatorname{Tor}^{\nabla^{H,\ell}}\bigl(E_A^{\ell},E_B^{\ell}\bigr)=-E_{[A,B]}^{\ell}.
$$
Finally, a curve is geodesic precisely when its left-trivialized velocity $T(t)^{-1}\dot T(t)$ is constant. If this constant is $A$, then
$$
\dot T(t)=T(t)A.
$$
The solution with $T(0)=T_0$ is $T(t)=T_0e^{tA}$.
\end{proof}

The corresponding connection on the cone $\Omega$ is the pushforward
$$
\nabla^{\mathrm{cv}}_UV:=\Phi_*\Bigl(\nabla^{H,\ell}{}_{\Phi_*^{-1}U}\,\Phi_*^{-1}V\Bigr).
$$

\begin{theorem}\label{cho-geo}
The connection $\nabla^{\mathrm{cv}}$ on $\Omega$ is flat and has nonzero torsion. For every $x,y\in\Omega$, the Cholesky-Vinberg path
$$
\gamma^{\mathrm{cv}}_{x,y}(t)=\Phi\bigl(T_x(T_x^{-1}T_y)^t\bigr)
$$
is the unique $\nabla^{\mathrm{cv}}$-geodesic joining $x$ to $y$, and its midpoint is the Cholesky-Vinberg mean,
$$
x\cv y=\gamma^{\mathrm{cv}}_{x,y}(1/2).
$$
\end{theorem}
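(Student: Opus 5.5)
The plan is to transport everything through the diffeomorphism $\Phi$ and read off the statement from Proposition~\ref{lft-in-met-H}. By construction, $\Phi$ is an affine diffeomorphism from $(H,\nabla^{H,\ell})$ onto $(\Omega,\nabla^{\mathrm{cv}})$. Concretely, curvature and torsion are natural under diffeomorphisms:
\[
R^{\nabla^{\mathrm{cv}}}(\Phi_*X,\Phi_*Y)\Phi_*Z=\Phi_*\bigl(R^{\nabla^{H,\ell}}(X,Y)Z\bigr),
\qquad
\operatorname{Tor}^{\nabla^{\mathrm{cv}}}(\Phi_*X,\Phi_*Y)=\Phi_*\operatorname{Tor}^{\nabla^{H,\ell}}(X,Y).
\]
These identities use only that $\Phi_*$ commutes with Lie brackets of vector fields.

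Flatness then follows at once from the flatness of $\nabla^{H,\ell}$. For the torsion, Proposition~\ref{lft-in-met-H} gives $\operatorname{Tor}(E^\ell_A,E^\ell_B)=-E^\ell_{[A,B]}$, so it suffices to exhibit a noncommuting pair in $\hLie$. Take $A=\diag(1,0,0)$ and $B=E_{31}$ (the matrix unit in position $(3,1)$). Then $[A,B]=-E_{31}\neq0$. Since $\Phi_*$ is injective, the torsion of $\nabla^{\mathrm{cv}}$ is nonzero everywhere. It even gives an explicit nonvanishing value $d\Phi_T(TE_{31})$.

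Geodesics correspond the same way: a curve $c$ in $\Omega$ is a $\nabla^{\mathrm{cv}}$-geodesic iff $\Phi^{-1}\circ c$ is a $\nabla^{H,\ell}$-geodesic. This holds because $\nabla^{\mathrm{cv}}_{\dot c}\dot c=\Phi_*(\nabla^{H,\ell}_{\dot\Gamma}\dot\Gamma)$ along $\Gamma=\Phi^{-1}\circ c$. The pushforward definition is stated for vector fields, so I will check this covariant-derivative-along-curves compatibility via local extensions, or directly in the left-invariant frame. In that frame, $\nabla^{\mathrm{cv}}_{\dot c}\dot c=\sum_j \dot g_j\,\Phi_*E^\ell_{A_j}$, where $\dot\Gamma=\sum g_jE^\ell_{A_j}$. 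By Proposition~\ref{lft-in-met-H}, the $\nabla^{\mathrm{cv}}$-geodesics are therefore exactly the curves $t\mapsto\Phi(T_0e^{tA})$.

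For existence, set $T_0=T_x$ and $A=\log(T_x^{-1}T_y)\in\hLie$, which lies in $\hLie$ by Lemma~\ref{29-april-1}. Then $T_xe^{tA}=T_x(T_x^{-1}T_y)^t$, so $\gamma^{\mathrm{cv}}_{x,y}$ is a geodesic. Its endpoints are $x$ and $y$ by Theorem~\ref{thm-4-jul}(i).

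For uniqueness, any geodesic $c:[0,1]\to\Omega$ with $c(0)=x$ and $c(1)=y$ has the form $\Phi(T_0e^{tA})$. Injectivity of $\Phi$ forces $T_0=T_x$ and $T_xe^{A}=T_y$, that is, $e^A=T_x^{-1}T_y$. The main obstacle is to conclude $A=\log(T_x^{-1}T_y)$, which requires injectivity of $\exp:\hLie\to H$. I would argue this directly. The diagonal of $e^A$ is $(e^{a_1},e^{a_2},e^{a_3})$, so the diagonal entries $a_i$ are determined. By the divided-difference formula from Proposition~\ref{prop:explicit-log-coordinates}, the $(3,1)$ entry of $e^A$ equals $u_4\,\frac{e^{a_1}-e^{a_3}}{a_1-a_3}$, or $u_4e^{a_1}$ when $a_1=a_3$. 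This factor is nonzero, so $u_4$ is determined, and $u_5$ is handled in the same way. Alternatively, one can cite that $\exp$ is a diffeomorphism $\hLie\to H$, which the paper already uses in the proof of the clan exponential proposition.

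Finally, the midpoint statement is the definition $x\cv y=\gamma^{\mathrm{cv}}_{x,y}(1/2)$.
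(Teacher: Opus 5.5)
Your proposal is correct and follows the same route as the paper: you push $(H,\nabla^{H,\ell})$ forward through $\Phi$, recognize $\gamma^{\mathrm{cv}}_{x,y}$ as the image of the left-invariant geodesic $t\mapsto T_xe^{t\log(T_x^{-1}T_y)}$, and obtain uniqueness from injectivity of $\exp:\hLie\to H$. You also make explicit two points the paper only asserts, namely a concrete noncommuting pair $\diag(1,0,0)$, $E_{31}$ showing that the torsion is genuinely nonzero, and a direct divided-difference check that $\exp$ is injective on $\hLie$; both are valid.
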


\begin{proof}
Flatness and torsion properties are transported by the diffeomorphism $\Phi$. Let
$$
A_{x,y}=\log(T_x^{-1}T_y)\in\hLie.
$$
Then the $\nabla^{H,\ell}$-geodesic through $T_x$ with constant  
velocity $A_{x,y}$ is
$$
\Gamma_{x,y}^{\ell}(t)=T_xe^{tA_{x,y}}=T_x(T_x^{-1}T_y)^t.
$$
Applying $\Phi$ gives
$$
\Phi\bigl(\Gamma_{x,y}^{\ell}(t)\bigr)=\gamma^{\mathrm{cv}}_{x,y}(t),
$$
which is therefore the $\nabla^{\mathrm{cv}}$-geodesic from $x$ to $y$.  
Because $\exp:\hLie\to H$ is a diffeomorphism, $A_{x,y}$ is uniquely determined by the endpoints; uniqueness of the geodesic follows.
\end{proof}

\begin{proposition}\label{prop-left-inv}
The natural metric on $H$ associated with $\nabla^{H,\ell}$ is the left-invariant metric
\begin{equation}\label{fl-in-met}
\cg^{H,\ell}_T(U,V)=Q_{\hLie}(T^{-1}U,T^{-1}V),\qquad T\in H, \;\; U, V \in T_T H.
\end{equation}
\end{proposition}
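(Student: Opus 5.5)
The plan is to show that \eqref{fl-in-met} is forced once two requirements are imposed. First, the metric must be parallel for $\nabla^{H,\ell}$. Second, its value at the identity must be the one supplied by the cone, namely the clan inner product, which by Proposition~\ref{prop:QDelta-formula} equals $\cg^{\mathrm{can}}_e$. I would then identify the resulting metric as the pullback $\Phi^*\cg^{\mathrm{can}}$. This explains the word ``natural'': the metric is not chosen ad hoc but is the one making $\Phi$ an isometry onto the canonical Hessian geometry.

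\emph{Step 1: parallel metrics are left-invariant.} By Proposition~\ref{lft-in-met-H}, the left-invariant fields $E^{\ell}_A$ form a global $\nabla^{H,\ell}$-parallel frame. For any metric $\cg$ and $A,B\in\hLie$,
$$
X\bigl(\cg(E^\ell_A,E^\ell_B)\bigr)=(\nabla_X\cg)(E^\ell_A,E^\ell_B).
$$
Hence $\cg$ is $\nabla^{H,\ell}$-parallel if and only if $\cg(E^\ell_A,E^\ell_B)$ is constant on the connected group $H$. This is equivalent to
$$
\cg_T(U,V)=\cg_I(T^{-1}U,T^{-1}V),
$$
so a parallel metric is exactly a left-invariant metric, and it is determined by the inner product $\cg_I$ on $\hLie$.

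\emph{Step 2: fixing $\cg_I$ by the cone.} The identification $\hLie\to V$ used throughout is $d\Phi_I(A)=A+A^{\top}$, whose inverse is $\iota$. The clan inner product is $Q_\Delta=\cg^{\mathrm{can}}_e$ by Proposition~\ref{prop:QDelta-formula}. Transporting it through $d\Phi_I$ gives
$$
\cg_I(A,B)=Q_\Delta(A+A^\top,B+B^\top)=Q_{\hLie}(A,B),
$$
which is the definition \eqref{def-Q_h}. Combined with Step 1, this yields \eqref{fl-in-met}.

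\emph{Step 3: consistency check via $\Phi^*\cg^{\mathrm{can}}$.} I would verify that \eqref{fl-in-met} coincides with the pullback $\Phi^*\cg^{\mathrm{can}}$. The map $\Phi$ intertwines left translation with the congruence action, $\Phi(hT)=\rho(h)\Phi(T)$. The canonical Hessian metric $\cg^{\mathrm{can}}$ is $G(\Omega)$-invariant, hence $\rho(H)$-invariant. Therefore $\Phi^*\cg^{\mathrm{can}}$ is left-invariant, and at $I$ it equals $\cg^{\mathrm{can}}_e(A+A^\top,B+B^\top)=Q_{\hLie}(A,B)$.

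As an explicit check, one can use $d\Phi_T(TA)=T(A+A^\top)T^\top$ together with the invariance $\cg^{\mathrm{can}}_{TT^\top}(TuT^\top,TvT^\top)=\cg^{\mathrm{can}}_e(u,v)$. Taking $U=TA$ and $V=TB$, this gives
$$
(\Phi^*\cg^{\mathrm{can}})_T(U,V)=Q_{\hLie}(T^{-1}U,T^{-1}V).
$$
In particular, $Q_{\hLie}$ is positive definite, since by \eqref{Qh} it is a diagonal form with positive coefficients, so \eqref{fl-in-met} is indeed a Riemannian metric.

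The main obstacle is conceptual rather than computational. I need to justify that the normalization at $I$ is canonical and not arbitrary, because Step 1 alone allows any inner product on $\hLie$. I would address this with Step 3: among all $\nabla^{H,\ell}$-parallel metrics, \eqref{fl-in-met} is the unique one for which $\Phi:(H,\cg)\to(\Omega,\cg^{\mathrm{can}})$ is an isometry. The only delicate point is the invariance identity $\cg^{\mathrm{can}}_{\rho(h)x}(\rho(h)u,\rho(h)v)=\cg^{\mathrm{can}}_x(u,v)$. This follows from $\varphi_\Omega(gx)=|\det g|^{-1}\varphi_\Omega(x)$, since the constant $\log|\det g|$ disappears after taking two derivatives.
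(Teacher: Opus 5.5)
Your proposal is correct. Its core is the paper's entire proof: the paper only observes that \eqref{fl-in-met} gives $\cg^{H,\ell}_T\bigl(E^\ell_A(T),E^\ell_B(T)\bigr)=Q_{\hLie}(A,B)$, so the metric has constant coefficients in the $\nabla^{H,\ell}$-parallel frame and therefore $\nabla^{H,\ell}\cg^{H,\ell}=0$. That is the \emph{if} half of your Step~1.

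Where you depart is in reading \emph{natural} as a characterization rather than a compatibility check. First, you prove the converse: every $\nabla^{H,\ell}$-parallel metric is left-invariant and is determined by $\cg_I$. Second, you justify the normalization $\cg_I=Q_{\hLie}$ by identifying the resulting metric with $\Phi^*\cg^{\mathrm{can}}$. That second step is exactly Proposition~\ref{prop:cv-can-metric}, which the paper proves separately by the same argument. It uses the equivariance $\Phi(ST)=\rho(S)\Phi(T)$, the $H$-invariance of $\cg^{\mathrm{can}}$, and a comparison at $I$ via $d\Phi_I(A)=A+A^{\top}$.

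What your route buys is an explanation of why the normalization at the identity is not arbitrary. It also supplies a proof of the $G(\Omega)$-invariance of $\cg^{\mathrm{can}}$ from $\varphi_\Omega(gx)=|\det g|^{-1}\varphi_\Omega(x)$, which the paper only asserts. The paper's route is shorter because it takes $Q_{\hLie}$ as the given clan datum and defers the link with the Hessian metric to the later proposition.
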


\begin{proof}
For left-invariant fields $E_A^\ell(T)=TA$ and $E_B^\ell(T)=TB$,
$$
\cg_T^{H,\ell}\bigl(E_A^\ell(T),E_B^\ell(T)\bigr)
=
Q_{\mathfrak h}(A,B).
$$
Thus the metric is left-invariant and has constant coefficients in the $\nabla^{H,\ell}$-parallel frame. Consequently $\nabla^{H,\ell}\cg^{H,\ell}=0$.
\end{proof}

\begin{remark}
If
$$
T=
\begin{pmatrix}
a_1&0&0\\
0&a_2&0\\
u_4&u_5&a_3
\end{pmatrix},
$$
then the left Maurer-Cartan form $T^{-1}dT$ gives the  left-invariant coframe
$$
\omega_1=\frac{da_1}{a_1},
\;
\omega_2=\frac{da_2}{a_2},
\;
\omega_3=\frac{da_3}{a_3},
\;
\omega_4=\frac{du_4}{a_3}-\frac{u_4}{a_1a_3}\,da_1,
\;
\omega_5=\frac{du_5}{a_3}-\frac{u_5}{a_2a_3}\,da_2.
$$
and
$$
\cg^{H,\ell}
=
6\omega_1^2+6\omega_2^2+8\omega_3^2+4\omega_4^2+4\omega_5^2.
$$
\end{remark}

\begin{proposition}\label{prop:cv-can-metric}
Let $\cg^{\mathrm{cv}}$ be the pushforward metric of $\cg^{H,\ell}$ to $\Omega$ through $\Phi$. 
We have
$$
\Phi^*\cg^{\mathrm{can}}=\cg^{H,\ell},
$$
and consequently
$$
\cg^{\mathrm{cv}}=\cg^{\mathrm{can}}.
$$
In particular,
$$
\nabla^{\mathrm{cv}}\cg^{\mathrm{can}}=0.
$$
\end{proposition}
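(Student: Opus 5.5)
The strategy is to compare both metrics at the identity and then use invariance. Both $\Phi^*\cg^{\mathrm{can}}$ and $\cg^{H,\ell}$ should be left-invariant metrics on $H$, and they should agree at $I$; a left-invariant metric is determined by its value at $I$, so they must coincide.

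\emph{Left-invariance of $\Phi^*\cg^{\mathrm{can}}$.} Fix $h\in H$ and let $L_h(T)=hT$. The orbit map is equivariant:
\[
\Phi(hT)=hTT^\top h^\top=\rho(h)\Phi(T),
\qquad\text{so}\qquad
\Phi\circ L_h=\rho(h)\circ\Phi .
\]
The canonical Hessian metric is $G(\Omega)$-invariant, hence $\rho(h)$-invariant. Therefore
\[
L_h^*\Phi^*\cg^{\mathrm{can}}=\Phi^*\rho(h)^*\cg^{\mathrm{can}}=\Phi^*\cg^{\mathrm{can}} .
\]
If one prefers not to quote $G(\Omega)$-invariance, it can be checked directly from \eqref{hess-metric}. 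The term $\tr(x^{-1}ux^{-1}v)$ is congruence invariant. For the correction term, note that when $x\mapsto hxh^\top$ with $h\in H$, the entries $x_1,x_2$ become $a_1^2x_1,a_2^2x_2$, and a tangent vector $u$ has $u_1,u_2$ scaled by the same factors. Hence $u_1v_1/x_1^2$ and $u_2v_2/x_2^2$ are unchanged.

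\emph{Agreement at the identity.} Since $\Phi(I)=e$ and $d\Phi_I(A)=A+A^\top$, we get
\[
(\Phi^*\cg^{\mathrm{can}})_I(A,B)=\cg^{\mathrm{can}}_e\bigl(A+A^\top,B+B^\top\bigr).
\]
By Proposition~\ref{prop:QDelta-formula} this equals $Q_\Delta(A+A^\top,B+B^\top)$, which is $Q_{\hLie}(A,B)$ by \eqref{def-Q_h}. This in turn is $\cg^{H,\ell}_I(A,B)$ by \eqref{fl-in-met}. Both metrics are left-invariant and agree at $I$, so $\Phi^*\cg^{\mathrm{can}}=\cg^{H,\ell}$.

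\emph{Consequences.} Pushing forward by the diffeomorphism $\Phi$ gives $\cg^{\mathrm{cv}}=(\Phi^{-1})^*\cg^{H,\ell}=\cg^{\mathrm{can}}$. Proposition~\ref{prop-left-inv} shows that $\nabla^{H,\ell}\cg^{H,\ell}=0$. Parallelism is natural under diffeomorphisms, and $\nabla^{\mathrm{cv}}$ is by definition the pushforward of $\nabla^{H,\ell}$. Hence $\nabla^{\mathrm{cv}}\cg^{\mathrm{cv}}=0$, that is, $\nabla^{\mathrm{cv}}\cg^{\mathrm{can}}=0$.

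The only delicate point is the invariance of $\cg^{\mathrm{can}}$ under $\rho(H)$. The paper asserts it, and the direct check above covers it. Everything else is bookkeeping already supplied by Propositions~\ref{prop:QDelta-formula} and~\ref{prop-left-inv}.
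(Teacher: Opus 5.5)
Your proposal is correct and follows the paper's proof step for step. You obtain left-invariance of $\Phi^*\cg^{\mathrm{can}}$ from $\Phi\circ L_h=\rho(h)\circ\Phi$ and the $H$-invariance of $\cg^{\mathrm{can}}$, check agreement at $I$ via $d\Phi_I(A)=A+A^\top$ and Proposition~\ref{prop:QDelta-formula}, and transport $\nabla^{H,\ell}\cg^{H,\ell}=0$ through $\Phi$. Your direct check that the correction term is $\rho(H)$-invariant is correct: the entries $x_1,x_2,u_1,u_2$ scale by $a_1^2,a_2^2$. The paper instead relies on the general $G(\Omega)$-invariance.
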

\begin{proof}
The equivariance $\Phi(ST)=\rho(S)\Phi(T)$ and the $H$-invariance of $\cg^{\mathrm{can}}$ imply that $\Phi^*\cg^{\mathrm{can}}$ is left-invariant. It is therefore enough to compare it with $\cg^{H,\ell}$ at the identity. Since $d\Phi_I(A)=A+A^\top$, Definition~\eqref{def-Q_h} gives
$$
(\Phi^*\cg^{\mathrm{can}})_I(A,B)
=
\cg_e^{\mathrm{can}}(A+A^\top,B+B^\top).
$$
The right-hand side equals $Q_{\mathfrak h}(A,B)=\cg^{H,\ell}_I(A,B)$, hence
$$
\Phi^*\cg^{\mathrm{can}}=\cg^{H,\ell}.
$$
This is equivalent to $\cg^{\mathrm{cv}}=\cg^{\mathrm{can}}$. Formula~\eqref{diff-i} also yields
$$
\cg^{\mathrm{cv}}_x(u,v)
=
Q_{\mathfrak h}\bigl(\underline{\,T_x^{-1}uT_x^{-\top}\,},
\underline{\,T_x^{-1}vT_x^{-\top}\,}\bigr).
$$
Finally,  Proposition~\ref{prop-left-inv} implies
$$
\Phi^*(\nabla^{\mathrm{cv}}\cg^{\mathrm{cv}})
=
\nabla^{H,\ell}(\Phi^*\cg^{\mathrm{cv}})
=
\nabla^{H,\ell} \cg^{H,\ell}
=
0.
$$
Thus $\nabla^{\mathrm{cv}}\cg^{\mathrm{can}}=0$.
\end{proof}

\begin{remark}
The identity $\nabla^{\mathrm{cv}}\cg^{\mathrm{can}}=0$ should not be confused with Levi-Civita compatibility. The connection $\nabla^{\mathrm{cv}}$ has torsion, whereas the Levi-Civita connection of $\cg^{\mathrm{can}}$ is torsion-free.
\end{remark}

\begin{proposition}\label{prop:no-left-invariant-riemannian}
There is no left-invariant Riemannian metric on $H$ whose Levi-Civita geodesics
include all the affinely parametrized curves
$$
t\longmapsto T_0e^{tA},
\qquad T_0\in H,\quad A\in\mathfrak h.
$$
Consequently, no metric on $\Omega$ obtained by pushing forward a left-invariant
Riemannian metric on $H$ through $\Phi$ has all Cholesky-Vinberg paths as
affinely parametrized Levi-Civita geodesics.
\end{proposition}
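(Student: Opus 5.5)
The idea is to reduce the statement to an algebraic condition on $\hLie$ and then show that this condition fails for the present solvable algebra.

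Suppose $\cg$ is a left-invariant Riemannian metric on $H$ with Levi-Civita connection $\nabla$, and let $\langle\cdot,\cdot\rangle$ denote its value at $I$ on $\hLie$. The curves $t\mapsto T_0e^{tA}$ are left translates of one-parameter subgroups, and left translations are isometries. So the hypothesis amounts to asking that every one-parameter subgroup $e^{tA}$ be an affinely parametrized geodesic.

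The next step uses the Koszul formula for left-invariant fields. For left-invariant fields one has
$$
\nabla_{E_A^\ell}E_B^\ell=\tfrac12 E^\ell_{[A,B]}-\tfrac12 E^\ell_{\operatorname{ad}_A^*B+\operatorname{ad}_B^*A},
$$
where $\operatorname{ad}^*$ is the adjoint with respect to $\langle\cdot,\cdot\rangle$. Along $T(t)=e^{tA}$ the velocity is $E_A^\ell$, so the geodesic equation reduces to $\operatorname{ad}_A^*A=0$. Equivalently, $\langle [A,B],A\rangle=0$ for all $A,B\in\hLie$. Polarizing in $A$ gives $\langle[A,B],C\rangle=-\langle [C,B],A\rangle$, that is, $\operatorname{ad}_B$ is skew-adjoint for every $B$. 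Thus the metric would be bi-invariant.

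A skew-adjoint operator has purely imaginary spectrum, so we obtain a contradiction by exhibiting some $\operatorname{ad}_B$ with a nonzero real eigenvalue. Take $B=\diag(1,0,0)\in\hLie$ and $A=E_{31}$, the matrix with a single $1$ in position $(3,1)$, which is the matrix already used in the proof of Theorem~\ref{thm:not-lc-geodesic-can}. Then
$$
[B,A]=BA-AB=0-E_{31}=-E_{31}.
$$
Hence $E_{31}$ is an eigenvector of $\operatorname{ad}_B$ with eigenvalue $-1$. On the other hand, skew-adjointness gives
$$
\langle\operatorname{ad}_B E_{31},E_{31}\rangle=0,
$$
while the left-hand side equals $-\langle E_{31},E_{31}\rangle<0$. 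This is a contradiction. Alternatively, $\hLie$ is nonabelian solvable and so admits no bi-invariant metric.

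For the consequence on $\Omega$, a pushed-forward metric $\Phi_*\cg$ has Levi-Civita connection $\Phi_*\nabla$, and its geodesics are the images of the $\nabla$-geodesics. By Theorem~\ref{cho-geo}, the Cholesky-Vinberg paths are exactly the images $\Phi(T_xe^{tA})$. As $x$ and $y$ range over $\Omega$, the factor $T_x$ ranges over all of $H$ and $A=\log(T_x^{-1}T_y)$ ranges over all of $\hLie$, so every curve $T_0e^{tA}$ arises in this way. The statement therefore transfers from $H$ to $\Omega$.

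The only delicate step is justifying that the geodesic condition for one-parameter subgroups reduces exactly to $\langle[A,B],A\rangle=0$, which requires getting the sign conventions in the Koszul formula right. I would verify it directly from $2\langle\nabla_X X,Y\rangle=2\langle[Y,X],X\rangle$, valid for left-invariant $X,Y$.
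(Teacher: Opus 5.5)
Your proposal is correct and follows essentially the same route as the paper. You use the Koszul formula to reduce the geodesic condition on one-parameter subgroups to $\langle [X,Y],X\rangle=0$ (your sign check $2\langle\nabla_XX,Y\rangle=2\langle[Y,X],X\rangle$ agrees with the paper's $-2\langle[X,Y],X\rangle$), then polarize to make every $\operatorname{ad}_X$ skew-adjoint, reach the contradiction via $[\diag(1,0,0),E_{31}]=-E_{31}$, and transfer to $\Omega$ through the isometry $\Phi$ with $x=\Phi(T_0)$, $y=\Phi(T_0e^{A})$, exactly as the paper does.
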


\begin{proof}
Assume, by contradiction, that $\cg$ is a left-invariant Riemannian metric on
$H$ with the stated property.

We identify each $X\in\mathfrak h$ with the corresponding left-invariant vector field $E_X^\ell(T)=TX$, for $T\in H$.
Since $\cg$ is left-invariant, its Levi-Civita connection $\nabla^{LC}$ is also
left-invariant.

Let $X\in\mathfrak h$ and consider the one-parameter subgroup
$
\gamma_X(t)=e^{tX}.
$
Its velocity is
$
\dot\gamma_X(t)=e^{tX}X=E_X^\ell(\gamma_X(t)).
$
Thus $\gamma_X$ is a Levi-Civita geodesic if and only if
$$
\nabla^{LC}_{E_X^\ell}E_X^\ell=0
$$
along $\gamma_X$. Since the vector field
$
\nabla^{LC}_{E_X^\ell}E_X^\ell
$
is left-invariant, it is enough to test this equality at the identity. Hence
the geodesic condition is equivalent to
$$
\nabla^{LC}_{X}X=0.
$$

Let $\langle\cdot,\cdot\rangle$ be the inner product on $\mathfrak h$ induced by $\cg$ at the identity.
For left-invariant vector fields, the derivative terms in the Koszul formula vanish, and therefore
$$
2\langle \nabla^{LC}_X X,Y\rangle=-2\langle [X,Y],X\rangle.
$$
Thus $\nabla^{LC}_X X=0$ if and only if
$$
\langle [X,Y],X\rangle=0
\qquad\text{for all }Y\in\mathfrak h.
$$
Since this holds for every $X\in\mathfrak h$, polarization yields
$$
\langle [X,Y],Z\rangle+\langle Y,[X,Z]\rangle=0
\qquad\text{for all }X,Y,Z\in\mathfrak h.
$$
Thus $\operatorname{ad}_X$ is skew-adjoint for every $X$. Consider
$$
H_1=\begin{pmatrix}1&0&0\\0&0&0\\0&0&0\end{pmatrix},
\qquad
E_4=\begin{pmatrix}0&0&0\\0&0&0\\1&0&0\end{pmatrix}.
$$
Since $[H_1,E_4]=-E_4$, skew-adjointness gives
$$
0=\langle[H_1,E_4],E_4\rangle+\langle E_4,[H_1,E_4]\rangle
=-2\langle E_4,E_4\rangle,
$$
which contradicts positive definiteness.

For the second assertion, $\Phi$ is an isometry when $\Omega$ is equipped with a pushed-forward metric. The Cholesky-Vinberg path from $x=\Phi(T_0)$ to $y=\Phi(T_0e^A)$ is
$$
\gamma^{\mathrm{cv}}_{x,y}(t)
=
\Phi\bigl(T_0(T_0^{-1}T_0e^A)^t\bigr)
=
\Phi(T_0e^{tA}).
$$
If all such paths were affinely parametrized Levi-Civita geodesics, their pullbacks $T_0e^{tA}$ would have the same property on $H$, contrary to the first assertion.
\end{proof}

\begin{remark}
The final part of the proof of the first assertion follows also from more general results of Milnor:
if $\operatorname{ad}_X$ is skew-adjoint for every $X\in\mathfrak h$, then by Milnor's criterion \cite[p. 296-297]{Milnor}, the metric $\cg$ is bi-invariant. Again by Milnor's theorem, this would force the group $H$ to be isomorphic to the direct product of a compact group and a commutative group, which is impossible since $H$ is connected, solvable, and non-abelian.

\end{remark}

\subsection{The clan flat connection and its metric}

We next describe the right-invariant counterpart of the previous geometry and its intrinsic expression on the cone in terms of clan vector fields.

\begin{proposition}
There is a unique right-invariant affine connection $\nabla^{H,r}$ on $H$ such that
$$
\nabla^{H,r}{}_{E_A^{r}}E_B^{r}=0,\qquad A,B\in\hLie.
$$
This connection is flat, and its torsion is
$$
\operatorname{Tor}^{\nabla^{H,r}}\bigl(E_A^{r},E_B^{r}\bigr)=E_{[A,B]}^{r}.
$$
Its geodesics are exactly the curves
$$
\Gamma_{T_0,A}^{r}(t)=e^{tA}T_0,\qquad T_0\in H,\quad A\in\hLie.
$$
\end{proposition}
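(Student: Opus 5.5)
The plan is to mirror the proof of Proposition~\ref{lft-in-met-H}, replacing the left-invariant frame by the right-invariant one. I will fix a basis $A_1,\dots,A_5$ of $\hLie$ and set $F_i:=E^r_{A_i}$, so $F_i(T)=A_iT$. These fields form a global frame on $H$, since right multiplication by $T$ is invertible. Every vector field can then be written uniquely as $Y=\sum_j g_jF_j$, and I will define
$$
\nabla^{H,r}{}_XY:=\sum_{j=1}^5 X(g_j)\,F_j .
$$
This is an affine connection, and it kills every $E^r_B$, because the coefficients of $E^r_B$ in the frame are constant. Uniqueness is immediate: any connection with $\nabla_{E^r_A}E^r_B=0$ has, by $C^\infty$-linearity in the first slot and the Leibniz rule in the second, the displayed formula. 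Right-invariance follows because right translations $R_S$ preserve each $F_i$, since $(R_S)_*(A_iT)=A_iTS$. Flatness holds because the frame is parallel, which gives $R(F_i,F_j)F_k=\nabla_{F_i}\nabla_{F_j}F_k-\nabla_{F_j}\nabla_{F_i}F_k-\nabla_{[F_i,F_j]}F_k=0$; here the last term vanishes since $F_k$ is parallel along every direction.

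For the torsion, $\operatorname{Tor}(E^r_A,E^r_B)=-[E^r_A,E^r_B]$, so the only real input is the bracket of right-invariant fields, and I expect the sign there to be the main point needing care. I will compute it directly with matrix coordinates. The fields $E^r_A(T)=AT$ are linear vector fields on the matrix space, and for linear fields $X(T)=\mathcal{A}T$, $Y(T)=\mathcal{B}T$ the bracket is $[X,Y](T)=DY(X)-DX(Y)=\mathcal{B}AT-\mathcal{A}BT$. Hence $[E^r_A,E^r_B]=E^r_{BA-AB}=-E^r_{[A,B]}$. This gives
$$
\operatorname{Tor}(E^r_A,E^r_B)=0-0-[E^r_A,E^r_B]=E^r_{[A,B]},
$$
consistent with the sign $-E^\ell_{[A,B]}$ obtained for the left connection.

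For the geodesics, I will write the velocity of a curve $T(t)$ as $\dot T=(\dot T T^{-1})T$, so that its coefficients in the right frame are those of the right-trivialized velocity $\dot TT^{-1}\in\hLie$. By the defining formula, $\nabla_{\dot T}\dot T=\sum_j \tfrac{d}{dt}g_j(t)F_j$, so the curve is a geodesic if and only if $\dot TT^{-1}=A$ is constant. The resulting linear ODE $\dot T=AT$ with $T(0)=T_0$ has the unique solution $T(t)=e^{tA}T_0$, which lies in $H$ because $e^{tA}\in H$. Conversely, each such curve has constant right-trivialized velocity $A$.
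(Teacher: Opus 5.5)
Your proof is correct and is essentially the paper's argument: the paper only says the proof is the right-invariant analogue of Proposition~\ref{lft-in-met-H}. That means declaring the frame $E^r_{A_i}$ parallel, reading off flatness, obtaining the torsion from $[E^r_A,E^r_B]=-E^r_{[A,B]}$, and characterizing geodesics by constant right-trivialized velocity $\dot T T^{-1}=A$, which is exactly what you do, and your computation of the bracket of the linear fields $T\mapsto AT$, $T\mapsto BT$ supplies the sign the paper leaves implicit.
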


\begin{proof}
The proof is similar to the proof of Proposition~\ref{lft-in-met-H}.
\end{proof}

We now push $\nabla^{H,r}$ forward to $\Omega$,
$$
\nabla^{\mathrm{cl}}_UV:=\Phi_*\Bigl(\nabla^{H,r}{}_{\Phi_*^{-1}U}\,\Phi_*^{-1}V\Bigr).
$$
In intrinsic terms, $\nabla^{\mathrm{cl}}$ is characterized by the clan frame,
$$
\nabla^{\mathrm{cl}}_{X_u}X_v=0,\qquad u,v\in V.
$$

\begin{proposition}\label{pro-cho-clan}
The connection $\nabla^{\mathrm{cl}}$ is flat, and
$$
\operatorname{Tor}^{\nabla^{\mathrm{cl}}}(X_u,X_v)=X_{u\Delta v-v\Delta u},
\qquad u,v\in V.
$$
For every $v\in V$, the integral curves of the constant clan field $X_v$ are $\nabla^{\mathrm{cl}}$-geodesics and are given by
$$
\eta_{x,v}(t)=e^{t\underline v}xe^{t\underline v^{\top}},\qquad x\in\Omega.
$$
For every $x,y\in\Omega$, if $v_{x,y}\in V$ is defined by
$$
\underline{v_{x,y}}=T_x\log(T_x^{-1}T_y)T_x^{-1},
$$
then
$$
\eta_{x,v_{x,y}}(t)=\gamma^{\mathrm{cv}}_{x,y}(t).
$$
Hence the Cholesky-Vinberg path is also a $\nabla^{\mathrm{cl}}$-geodesic.
\end{proposition}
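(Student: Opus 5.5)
Everything will be transported from $H$ through $\Phi$, using the identity $X_v(\Phi(T))=d\Phi_T\bigl(E^r_{\underline v}(T)\bigr)$ established above. This identity says that the clan frame $\{X_v\}$ is $\Phi$-related to the right-invariant frame $\{E^r_{\underline v}\}$. Since $\nabla^{H,r}$ makes the right-invariant frame parallel, its pushforward $\nabla^{\mathrm{cl}}$ makes the clan frame parallel. This also justifies the intrinsic characterization $\nabla^{\mathrm{cl}}_{X_u}X_v=0$.

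\textbf{Flatness and torsion.} Flatness is immediate, since curvature is natural under diffeomorphisms and $\nabla^{H,r}$ is flat. For the torsion I use $\operatorname{Tor}(X_u,X_v)=-[X_u,X_v]$, which holds because the frame is parallel. Naturality of the bracket gives $[X_u,X_v]=\Phi_*[E^r_{\underline u},E^r_{\underline v}]$. Right-invariant fields satisfy $[E^r_A,E^r_B]=-E^r_{[A,B]}$, so the torsion is $\Phi_*E^r_{[\underline u,\underline v]}=X_w$, where $\underline w=[\underline u,\underline v]$; this agrees with the preceding proposition. It remains to identify $w$ with $u\Delta v-v\Delta u$. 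By construction $X_v(e)=v$ and $\rho$ is a Lie algebra action, so $\rho$ intertwines the bracket on $\hLie$. Hence $w=[\underline u,\underline v]+[\underline u,\underline v]^\top=\underline u\,v+v\,\underline u^\top-(\underline v\,u+u\,\underline v^\top)$. To check this, expand $v=\underline v+\underline v^\top$ and $u=\underline u+\underline u^\top$: the cross terms $\underline u\,\underline v^\top$ and $\underline v\,\underline u^\top$ cancel, leaving $\underline u\underline v-\underline v\underline u+(\cdots)^\top$. The sign convention must be fixed consistently with the earlier left-invariant proposition. I expect this sign bookkeeping to be the only delicate point, and I will verify it by direct expansion.

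\textbf{Geodesics.} The $\nabla^{H,r}$-geodesics are $e^{tA}T_0$. Applying $\Phi$ with $A=\underline v$ and $x=\Phi(T_0)$ gives $\Phi(e^{t\underline v}T_0)=e^{t\underline v}xe^{t\underline v^\top}=\eta_{x,v}(t)$. Its velocity is $\underline v\,\eta+\eta\,\underline v^\top=X_v(\eta)$, so $\eta_{x,v}$ is an integral curve of $X_v$. The earlier proposition on $\exp(tL_x)$ gives the same formula, with $y$ replaced by $x$ there.

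\textbf{Matching the Cholesky-Vinberg path.} Set $K=T_x^{-1}T_y$ and $A=T_x\log(K)T_x^{-1}\in\hLie$; note that $A\in\hLie$ since $\log K\in\hLie$ (Lemma~\ref{29-april-1}) and $\hLie$ is conjugation-stable under $H$. Define $v_{x,y}=A+A^\top$, so that $\underline{v_{x,y}}=A$. Then
$$
e^{tA}T_x=T_xe^{t\log K}T_x^{-1}T_x=T_xK^t,
$$
and applying $\Phi$ yields $\eta_{x,v_{x,y}}(t)=\Phi(T_xK^t)=\gamma^{\mathrm{cv}}_{x,y}(t)$. The curve $t\mapsto T_xK^t$ is simultaneously a left and a right one-parameter translate. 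Therefore the Cholesky-Vinberg path is a geodesic for both $\nabla^{\mathrm{cv}}$ and $\nabla^{\mathrm{cl}}$.
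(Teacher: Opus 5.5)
Your proposal is correct and follows the paper's proof. You transport the right-invariant parallelism through $\Phi$, get the torsion from $\operatorname{Tor}(X_u,X_v)=-[X_u,X_v]$, identify the integral curves of $X_v$ as the images of $e^{t\underline v}T_0$, and match $\gamma^{\mathrm{cv}}_{x,y}$ via $e^{tA}T_x=T_xK^t$. Your direct expansion showing $[\underline u,\underline v]+[\underline u,\underline v]^{\top}=u\Delta v-v\Delta u$ fills in the identification that the paper leaves implicit, and the signs you obtain are the correct ones.
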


\begin{proof}
Because $\nabla^{\mathrm{cl}}_{X_u}X_v=0$ in the clan frame, the connection is flat. Since these fields are pushforwards of right-invariant fields on $H$,
$$
[X_u,X_v]=-X_{u\Delta v-v\Delta u}.
$$
The stated torsion formula follows from $\operatorname{Tor}(X_u,X_v)=-[X_u,X_v]$ and is nonzero in general. If we set
$$
\eta_{x,v}(t)=e^{t\underline v}xe^{t\underline v^{\top}},
$$
then
$$
\frac{d}{dt}\eta_{x,v}(t)=\underline v\eta_{x,v}(t)+\eta_{x,v}(t)\underline v^{\top}=X_v\bigl(\eta_{x,v}(t)\bigr),
$$
so $\eta_{x,v}$ is the integral curve of the constant field $X_v$. Since $\nabla^{\mathrm{cl}}_{X_v}X_v=0$, every such integral curve is a $\nabla^{\mathrm{cl}}$-geodesic.

Now fix $x,y\in\Omega$ and define $v_{x,y}$ as above. Since $\mathfrak h$ is stable under conjugation by elements of $H$, the matrix $T_x\log(T_x^{-1}T_y)T_x^{-1}$ again belongs to $\mathfrak h$, so $v_{x,y}$ is well defined. Then
$$
e^{t\underline{v_{x,y}}}=T_xe^{tA_{x,y}}T_x^{-1},\qquad\text{with }\, A_{x,y}=\log(T_x^{-1}T_y).
$$
Using $x=T_xT_x^{\top}$, we obtain
$$
\eta_{x,v_{x,y}}(t)=T_xe^{tA_{x,y}}e^{tA_{x,y}^{\top}}T_x^{\top}=\Phi\bigl(T_xe^{tA_{x,y}}\bigr)=\Phi\bigl(T_x(T_x^{-1}T_y)^t\bigr)=\gamma^{\mathrm{cv}}_{x,y}(t).
$$
Thus the Cholesky-Vinberg path is a $\nabla^{\mathrm{cl}}$-geodesic as well.
\end{proof}

\begin{proposition}
The natural metric on $H$ associated with $\nabla^{H,r}$ is the right-invariant metric
$$
\cg^{H,r}_T(U,V)=Q_{\hLie}(UT^{-1},VT^{-1}),\qquad T\in H.
$$
\end{proposition}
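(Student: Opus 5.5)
The statement mirrors Proposition~\ref{prop-left-inv}, so I will run the same argument with left and right interchanged. Here ``natural metric associated with $\nabla^{H,r}$'' is read as the metric on $H$ that equals $Q_{\hLie}$ at the identity and has constant coefficients in the $\nabla^{H,r}$-parallel frame $\{E_A^r\}_{A\in\hLie}$.

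First, I check that $\cg^{H,r}$ is a well-defined Riemannian metric. For $T\in H$ and $U\in T_TH\simeq\hLie$, right translation $R_{T^{-1}}:S\mapsto ST^{-1}$ maps $H$ to itself and has differential $U\mapsto UT^{-1}$. Hence $UT^{-1}\in\hLie$, and $\cg^{H,r}_T$ is the pullback of the inner product $Q_{\hLie}$ under a linear isomorphism $T_TH\to\hLie$.

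Next, I check positivity of $Q_{\hLie}$. The map $A\mapsto A+A^{\top}$ is a linear isomorphism $\hLie\to V$, and $Q_\Delta$ is positive definite by \eqref{Qh}. So $Q_{\hLie}$ is positive definite, and $\cg^{H,r}$ is smooth in $T$.

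Right-invariance follows directly. For $S\in H$, the differential of $R_S$ sends $U\in T_TH$ to $US\in T_{TS}H$. Then
$$
\cg^{H,r}_{TS}(US,VS)=Q_{\hLie}(US(TS)^{-1},VS(TS)^{-1})=Q_{\hLie}(UT^{-1},VT^{-1})=\cg^{H,r}_T(U,V).
$$

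For compatibility with the connection, I evaluate on right-invariant fields $E_A^r(T)=AT$ and $E_B^r(T)=BT$:
$$
\cg^{H,r}_T\bigl(E_A^r(T),E_B^r(T)\bigr)=Q_{\hLie}(A,B).
$$
This is constant in $T$. Let $A_1,\dots,A_5$ be a basis of $\hLie$; the fields $E_{A_i}^r$ form a $\nabla^{H,r}$-parallel global frame, in which the metric has constant coefficients. For any vector field $X$, the Leibniz rule gives
$$
(\nabla^{H,r}_X\cg^{H,r})(E_{A_i}^r,E_{A_j}^r)=X\bigl(Q_{\hLie}(A_i,A_j)\bigr)-\cg^{H,r}(\nabla^{H,r}_XE_{A_i}^r,E_{A_j}^r)-\cg^{H,r}(E_{A_i}^r,\nabla^{H,r}_XE_{A_j}^r)=0.$$
By tensoriality this gives $\nabla^{H,r}\cg^{H,r}=0$.

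Finally, uniqueness: a $\nabla^{H,r}$-parallel metric is determined by its value at $I$, since parallel transport along any path is given by the parallel frame. So $\cg^{H,r}$ is the unique $\nabla^{H,r}$-parallel metric restricting to $Q_{\hLie}$ at the identity. Nothing here is hard; the only point needing care is checking that the differential of right translation is $U\mapsto US$, so that $UT^{-1}\in\hLie$.
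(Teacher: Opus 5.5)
Your proposal is correct and takes the same approach as the paper, which also proves this as the right-invariant analogue of Proposition~\ref{prop-left-inv}: $\cg^{H,r}(E_A^r,E_B^r)=Q_{\hLie}(A,B)$ is constant in the $\nabla^{H,r}$-parallel right-invariant frame, so $\nabla^{H,r}\cg^{H,r}=0$. Your additional checks (that $UT^{-1}\in\hLie$, positivity of $Q_{\hLie}$, right-invariance, and uniqueness of a parallel metric with prescribed value at $I$) are accurate details that the paper leaves implicit.
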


\begin{proof}
This is the right-invariant analogue of Proposition~\ref{prop-left-inv}. The metric has constant coefficients in the $\nabla^{H,r}$-parallel right-invariant frame, so $\nabla^{H,r}\cg^{H,r}=0$.

\end{proof}

Recall that
the clan vector fields are defined by
$$
X_v(x)=v \Delta x=\underline{v}x+x\underline{v}^{\top}=\left.\frac{d}{dt}\right|_{t=0}e^{t\underline{v}}xe^{t\underline{v}^{\top}}.
$$

\begin{proposition}\label{prop-metric-cl}
 The metric on $\Omega$ naturally associated with $\nabla^{\mathrm{cl}}$ is the unique metric $\cg^{\mathrm{cl}}$ satisfying
\begin{equation}\label{g-cl1}
\cg^{\mathrm{cl}}_x\bigl(X_u(x),X_v(x)\bigr)=Q_{\Delta}(u,v),\qquad x\in\Omega,\quad u,v\in V.
\end{equation}
In coordinates with respect to the clan frame, if
$
\xi=\sum_{i=1}^5 c_iX_{e_i}(x)$ and 
$\eta=\sum_{i=1}^5 d_iX_{e_i}(x),
$
then
\begin{equation}\label{g-cl2}
\cg^{\mathrm{cl}}_x(\xi,\eta)=\frac32c_1d_1+\frac32c_2d_2+2c_3d_3+4c_4d_4+4c_5d_5.
\end{equation}
Moreover,
$$
\nabla^{\mathrm{cl}}\cg^{\mathrm{cl}}=0.
$$
 
\end{proposition}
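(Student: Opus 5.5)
The plan is to define $\cg^{\mathrm{cl}}$ as the pushforward through $\Phi$ of the right-invariant metric $\cg^{H,r}$, and then read off the three claims from the fact that the clan frame is the image of the right-invariant frame. The starting point is the identity established above,
$$
X_v\bigl(\Phi(T)\bigr)=d\Phi_T\bigl(E^r_{\underline v}(T)\bigr),\qquad v\in V.
$$
Since $\Phi$ is a diffeomorphism, for each $x\in\Omega$ the vectors $X_{e_1}(x),\dots,X_{e_5}(x)$ form a basis of $T_x\Omega$. Indeed, they are the images of the basis $E^r_{\underline{e_i}}(T_x)$, and $v\mapsto\underline v$ is a linear isomorphism $V\to\hLie$. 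Consequently, a metric on $\Omega$ is determined by its values on pairs of clan vector fields. Prescribing \eqref{g-cl1} therefore determines at most one metric. That metric exists and is positive definite, because $Q_\Delta$ is positive definite by the explicit formula \eqref{Qh}.

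Next I identify this metric with the pushforward of $\cg^{H,r}$. Put $x=\Phi(T)$. Then
$$
(\Phi_*\cg^{H,r})_x\bigl(X_u(x),X_v(x)\bigr)=\cg^{H,r}_T\bigl(\underline u\,T,\underline v\,T\bigr)=Q_{\hLie}(\underline u,\underline v).
$$
Using definition \eqref{def-Q_h} and $\underline u+\underline u^{\top}=u$, this equals $Q_\Delta(u,v)$. This justifies calling it the metric naturally associated with $\nabla^{\mathrm{cl}}$, since it is the transport of the metric naturally attached to $\nabla^{H,r}$.

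Formula \eqref{g-cl2} then follows by bilinearity. Expanding $\xi=\sum c_iX_{e_i}(x)$ and $\eta=\sum d_iX_{e_i}(x)$ gives $\sum c_id_j\,Q_\Delta(e_i,e_j)$. By \eqref{Qh}, $Q_\Delta$ is diagonal in the coordinate basis with entries $\tfrac32,\tfrac32,2,4,4$.

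For parallelism, I will use that $\nabla^{\mathrm{cl}}_{X_w}X_u=0$ for all $u,w\in V$. In addition, the function $\cg^{\mathrm{cl}}(X_u,X_v)=Q_\Delta(u,v)$ is constant on $\Omega$. Hence
$$
(\nabla^{\mathrm{cl}}_{X_w}\cg^{\mathrm{cl}})(X_u,X_v)=X_w\bigl(Q_\Delta(u,v)\bigr)-\cg^{\mathrm{cl}}(\nabla^{\mathrm{cl}}_{X_w}X_u,X_v)-\cg^{\mathrm{cl}}(X_u,\nabla^{\mathrm{cl}}_{X_w}X_v)=0.
$$
The clan fields span every tangent space and $\nabla^{\mathrm{cl}}\cg^{\mathrm{cl}}$ is tensorial, so $\nabla^{\mathrm{cl}}\cg^{\mathrm{cl}}=0$. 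Equivalently, this is $\Phi_*$ applied to $\nabla^{H,r}\cg^{H,r}=0$.

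I expect no serious obstacle here. The only point needing care is the bookkeeping in $Q_{\hLie}(\underline u,\underline v)=Q_\Delta(u,v)$, which relies on the correspondence $v\mapsto\underline v$ between the clan frame and the right-invariant frame.
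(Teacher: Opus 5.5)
Your proof is correct and follows the paper's argument. The clan fields form a global frame, so \eqref{g-cl1} determines a unique metric; \eqref{g-cl2} follows from the diagonal form \eqref{Qh} of $Q_\Delta$; and $\nabla^{\mathrm{cl}}\cg^{\mathrm{cl}}=0$ holds because the coefficients are constant in the $\nabla^{\mathrm{cl}}$-parallel clan frame. You also check explicitly that $\Phi_*\cg^{H,r}$ satisfies \eqref{g-cl1} via $Q_{\hLie}(\underline u,\underline v)=Q_\Delta(u,v)$, which makes precise the phrase ``naturally associated'' that the paper leaves implicit.
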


\begin{proof}
Because $v\mapsto X_v(x)$ is an isomorphism from $V$ onto $T_x\Omega$ for every $x\in\Omega$, either rule \eqref{g-cl1} or \eqref{g-cl2} determines a   Riemannian metric on $\Omega$. Since its coefficients are constant in the $\nabla^{\mathrm{cl}}$-parallel clan frame, one has $\nabla^{\mathrm{cl}}\cg^{\mathrm{cl}}=0$. At $x=e$, the clan frame coincides with the standard basis of $V$, and therefore
$$
\cg^{\mathrm{cl}}_e=Q_{\Delta}=\cg_e^{\mathrm{can}}.
$$
\end{proof}

\begin{remark} One has
$
\cg^{\mathrm{cl}}_e=\cg^{\mathrm{can}}_e,
$
but in general
$
\cg^{\mathrm{cl}}\neq \cg^{\mathrm{can}}.
$
To see this,  consider
$$
x_s=\begin{pmatrix}1&0&s\\0&1&0\\s&0&1+s^2\end{pmatrix}=\Phi\left(\begin{pmatrix}1&0&0\\0&1&0\\s&0&1\end{pmatrix}\right),\qquad s\ne0.
$$
Let
$$
e_1=\begin{pmatrix}1&0&0\\0&0&0\\0&0&0\end{pmatrix},
\qquad
e_4=\begin{pmatrix}0&0&1\\0&0&0\\1&0&0\end{pmatrix}.
$$
A direct computation gives
$$
X_{e_1}(x_s)=\begin{pmatrix}1&0&s/2\\0&0&0\\s/2&0&0\end{pmatrix},
\qquad
X_{e_4}(x_s)=\begin{pmatrix}0&0&1\\0&0&0\\1&0&2s\end{pmatrix}.
$$
By definition of $\cg^{\mathrm{cl}}$,
$$
\cg^{\mathrm{cl}}_{x_s}\bigl(X_{e_1}(x_s),X_{e_4}(x_s)\bigr)=Q_{\Delta}(e_1,e_4)=0.
$$
On the other hand, substituting the two tangent vectors into the explicit formula for $\cg^{\mathrm{can}}$ yields
$$
\cg^{\mathrm{can}}_{x_s}\bigl(X_{e_1}(x_s),X_{e_4}(x_s)\bigr)=-2s.
$$
Hence $\cg^{\mathrm{cl}}\neq \cg^{\mathrm{can}}$ whenever $s\ne0$.
\end{remark}

\section{The logarithmic flat connection and the Log-Vinberg mean}\label{sec:log-fla-Lov-Vinberg}

\subsection{The clan logarithmic geometry and the Log-Vinberg mean}

Let $D^V$ denote the standard flat connection on the
vector space $V$, that is, the unique affine connection such that in linear coordinates
$$
D^V_{\partial_i}\partial_j=0
\qquad\text{for all }i,j.
$$
Similarly, let $D^{\mathfrak h}$ denote the standard flat connection on the vector space
$\mathfrak h$, characterized in linear coordinates by
$$
D^{\mathfrak h}_{\partial_i}\partial_j=0
\qquad\text{for all }i,j.
$$
Since
$
\iota:V\to\mathfrak h$,
$\iota(v)=\underline v,
$
is a linear isomorphism, it preserves affine lines and therefore identifies these two flat
connections,
$$
D^{\mathfrak h}=\iota_*D^V.
$$

Because $\hLie$ is an associative algebra of lower triangular matrices and every element of $H$ has positive diagonal, the matrix exponential is a global diffeomorphism
$$
\exp:\hLie\to H
$$
with inverse the principal logarithm.

\begin{definition}
Let $\nabla^{H,\log}$ be the affine connection on $H$ obtained by transporting the flat connection $D^{\mathfrak h}$ on $\hLie$ through the global chart
$$
\log:H\to\hLie.
$$
Explicitly,
$$
\nabla^{H,\log}{}_X Y=(\exp)_*\Bigl(D^{\mathfrak h}_{(\log)_*X}(\log)_*Y\Bigr).
$$
\end{definition}

\begin{proposition}
The connection $\nabla^{H,\log}$ is flat and torsion-free. Its geodesics are exactly the curves
$$
\Gamma_{T_0,T_1}^{\log}(t)=\exp\bigl((1-t)\log T_0+t\log T_1\bigr),\qquad T_0,T_1\in H.
$$
\end{proposition}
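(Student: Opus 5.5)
The plan is to reduce everything to the flat connection $D^{\hLie}$ on the vector space $\hLie$, using that $\log:H\to\hLie$ is a global diffeomorphism. We already know this from the fact that $\exp:\hLie\to H$ is a diffeomorphism with inverse the principal logarithm, via Lemma~\ref{29-april-1} and the explicit divided-difference formulas in the proof of Proposition~\ref{prop:explicit-log-coordinates}. By definition, $\nabla^{H,\log}=\exp_*D^{\hLie}$, so $\log$ is an affine diffeomorphism from $(H,\nabla^{H,\log})$ onto $(\hLie,D^{\hLie})$. Curvature and torsion are natural under diffeomorphisms: for a diffeomorphism $F$ and connection $\nabla$ one has
$$
R^{F_*\nabla}(F_*X,F_*Y)F_*Z=F_*\bigl(R^{\nabla}(X,Y)Z\bigr),
\qquad
\operatorname{Tor}^{F_*\nabla}(F_*X,F_*Y)=F_*\operatorname{Tor}^{\nabla}(X,Y).
$$
This follows directly from the definitions together with $F_*[X,Y]=[F_*X,F_*Y]$.

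In the linear coordinates on $\hLie$, all Christoffel symbols of $D^{\hLie}$ vanish. Hence $R^{D^{\hLie}}=0$, and $\operatorname{Tor}^{D^{\hLie}}(X,Y)=D_XY-D_YX-[X,Y]=0$, since in coordinates $[X,Y]^j=X(Y^j)-Y(X^j)$. Transporting by $\exp$ then gives that $\nabla^{H,\log}$ is flat and torsion-free.

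For the geodesics, I will use that $\log$ is an affine map, so it carries $\nabla^{H,\log}$-geodesics bijectively onto $D^{\hLie}$-geodesics. The latter are exactly the affinely parametrized straight lines $t\mapsto P+tQ$: the geodesic equation in linear coordinates reads $\ddot c^{\,j}=0$. The geodesics of $\nabla^{H,\log}$ are therefore exactly the curves $t\mapsto\exp(P+tQ)$ with $P,Q\in\hLie$.

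It remains to match this family with the stated form. Given $T_0,T_1\in H$, set $P=\log T_0$ and $Q=\log T_1-\log T_0$; then $\exp(P+tQ)=\Gamma^{\log}_{T_0,T_1}(t)$. Conversely, given $P,Q$, take $T_0=\exp P$ and $T_1=\exp(P+Q)$, both in $H$; then $\log T_0=P$ and $\log T_1=P+Q$ by global invertibility. So the curve $\exp(P+tQ)$ is $\Gamma^{\log}_{T_0,T_1}$, viewed as defined for all $t\in\R$, which gives completeness as well.

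There is no serious obstacle. The one point needing care is that $\log$ really is a global chart onto all of $\hLie$, which is what makes the correspondence of geodesics exact rather than local. The naturality of curvature and torsion is routine.
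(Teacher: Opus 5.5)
Your proposal is correct and follows the same route as the paper. The paper's proof simply notes that in the global chart $A=\log T\in\hLie$ the Christoffel symbols of $\nabla^{H,\log}$ vanish, so the geodesic equation is $\ddot A=0$ and geodesics are exponentials of affine segments. Your version spells out what the paper leaves implicit: the naturality of curvature and torsion under the diffeomorphism $\exp$, and the two-way matching between the lines $P+tQ$ and the curves $\Gamma^{\log}_{T_0,T_1}$.
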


\begin{proof}
In the global coordinates $A=\log T\in\hLie$, the Christoffel symbols of $\nabla^{H,\log}$ vanish identically. Therefore the geodesic equation is
$$
\ddot A(t)=0,
$$
and the unique geodesic joining $T_0$ and $T_1$ is obtained by exponentiating the affine segment from $\log T_0$ to $\log T_1$.
\end{proof}

\begin{proposition}
The natural metric on $H$ associated with $\nabla^{H,\log}$ is
$$
\cg^{H,\log}_T(U,V)=Q_{\hLie}\bigl((d\log)_TU,(d\log)_TV\bigr).
$$
In the affine coordinates $A=\log T\in\hLie$, this metric is constant,
$$
\cg^{H,\log}=6\,dA_{11}^2+6\,dA_{22}^2+8\,dA_{33}^2+4\,dA_{31}^2+4\,dA_{32}^2.
$$
Therefore $\nabla^{H,\log}$ is the Levi-Civita connection of $\cg^{H,\log}$.
\end{proposition}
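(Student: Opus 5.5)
The argument runs through the global chart $\log:H\to\hLie$, in which $\nabla^{H,\log}$ is by construction the flat connection $D^{\hLie}$. First I check that $\cg^{H,\log}$ is the pullback under $\log$ of the constant inner product $Q_{\hLie}$ on the vector space $\hLie$. This is immediate from the definition, since $(d\log)_T U$ is exactly the coordinate expression of $U$ in the chart $A=\log T$. Hence, in the coordinates $A$, the metric has constant coefficients, given by the quadratic form $Q_{\hLie}(dA,dA)$.

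Next I compute that quadratic form explicitly. By \eqref{def-Q_h}, $Q_{\hLie}(A,A)=Q_{\Delta}(A+A^{\top},A+A^{\top})$. Take
$$
A=\begin{pmatrix}A_{11}&0&0\\0&A_{22}&0\\A_{31}&A_{32}&A_{33}\end{pmatrix}.
$$
Its symmetrization has coordinates
$$
u_1=2A_{11},\quad u_2=2A_{22},\quad u_3=2A_{33},\quad u_4=A_{31},\quad u_5=A_{32}.
$$
Substituting into \eqref{Qh} gives
$$
\tfrac32\cdot4A_{11}^2+\tfrac32\cdot4A_{22}^2+2\cdot4A_{33}^2+4A_{31}^2+4A_{32}^2,
$$
which is the stated $6,6,8,4,4$ form. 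This form is positive definite, because $Q_\Delta=\cg^{\mathrm{can}}_e$ by Proposition~\ref{prop:QDelta-formula} and the coefficients are visibly positive. So $\cg^{H,\log}$ is a genuine Riemannian metric.

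Finally, in these global linear coordinates the metric coefficients are constant. Its Christoffel symbols, computed from derivatives of the $g_{ij}$, therefore vanish identically. Thus the Levi-Civita connection has zero Christoffel symbols in the chart $A$. By the preceding proposition, $\nabla^{H,\log}$ also has zero Christoffel symbols in this chart. The two connections therefore coincide. Equivalently, $\nabla^{H,\log}$ is torsion-free by the preceding proposition, and it is metric because $D^{\hLie}$ preserves a constant inner product; uniqueness of the Levi-Civita connection then concludes.

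There is no real obstacle here. The only point needing care is the bookkeeping of the factor $2$ in the diagonal entries of the symmetrization, which produces the coefficients $6$ and $8$ rather than $3/2$ and $2$.
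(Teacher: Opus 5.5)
Your proposal is correct and takes the same approach as the paper: $\cg^{H,\log}$ is the pullback of the constant form $Q_{\hLie}$ through the global chart $\log$, so its Christoffel symbols vanish and its Levi-Civita connection is the flat connection $\nabla^{H,\log}$. Your symmetrization computation of the coefficients $6,6,8,4,4$ from \eqref{Qh} is correct, and it supplies the arithmetic that the paper states without writing out.
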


\begin{proof}
This is immediate from the definition. Indeed, in the coordinates $A=\log T$, the metric is the constant bilinear form $Q_{\hLie}$, so its Levi-Civita connection is the standard flat connection.
\end{proof}

Now transport this logarithmic geometry to $\Omega$. Since
$$
\ExpD=\Phi\circ\exp\circ\ \iota,
\qquad
\LogD=\iota^{-1}\circ\log\circ\Phi^{-1},
$$
we define the affine connection $\nabla^{\Delta}$ on $\Omega$ by
$$
\nabla^{\Delta}_UV:=(\ExpD)_*\Bigl(D^V_{(\LogD)_*U}(\LogD)_*V\Bigr).
$$

\begin{proposition}\label{prop:nablaDelta-pushforward}
The connection $\nabla^{\Delta}$ is exactly the pushforward of $\nabla^{H,\log}$ by $\Phi$,
$$
\nabla^{\Delta}=\Phi_*\nabla^{H,\log}.
$$
In particular, $\nabla^{\Delta}$ is flat and torsion-free.
\end{proposition}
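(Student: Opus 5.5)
The plan is to use naturality of pushforward of connections under composition of diffeomorphisms. Recall that for a diffeomorphism $F:M\to N$ and a connection $\nabla$ on $M$, the pushforward is
$$
(F_*\nabla)_UW=F_*\bigl(\nabla_{F^{-1}_*U}F^{-1}_*W\bigr).
$$
This operation is functorial: $(F\circ G)_*\nabla=F_*(G_*\nabla)$. This is a one-line check from the definition, using $(F\circ G)_*=F_*G_*$ on vector fields.

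With this, write $\ExpD=\Phi\circ\exp\circ\iota$, which was established earlier. By definition $\nabla^{\Delta}=(\ExpD)_*D^V$, since $(\LogD)_*=(\ExpD)_*^{-1}$. Functoriality then gives
$$
\nabla^{\Delta}=\Phi_*\bigl(\exp_*(\iota_*D^V)\bigr).
$$
Now use $\iota_*D^V=D^{\mathfrak h}$, recorded just before the definition of $\nabla^{H,\log}$. This identity holds because a linear isomorphism maps the constant coordinate fields of $V$ to constant fields on $\hLie$, so Christoffel symbols stay zero. Next, $\exp_*D^{\mathfrak h}=\nabla^{H,\log}$ is precisely the explicit formula in the definition of $\nabla^{H,\log}$, with $\log=\exp^{-1}$. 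Combining these yields $\nabla^{\Delta}=\Phi_*\nabla^{H,\log}$.

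For the final assertion, note that curvature and torsion are natural tensors. Indeed, $R^{F_*\nabla}(U,W)Z=F_*\bigl(R^\nabla(F^{-1}_*U,F^{-1}_*W)F^{-1}_*Z\bigr)$, and similarly for torsion, because $F_*$ commutes with Lie brackets. Hence flatness and torsion-freeness transfer from $\nabla^{H,\log}$, by the preceding proposition, to $\nabla^{\Delta}$. Alternatively, one can argue directly: $\nabla^\Delta$ has vanishing Christoffel symbols in the global chart $\LogD$, which gives both properties at once.

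There is no real obstacle here. The only point needing care is the bookkeeping of directions: confirming that $(\LogD)_*$ acting on fields on $\Omega$ equals $(\ExpD)_*^{-1}$, and that the three factors compose in the right order. I would state the functoriality identity explicitly before applying it.
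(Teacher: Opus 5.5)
Your proposal is correct and takes essentially the same route as the paper: the paper's proof is your functoriality computation written out inline, using $\ExpD=\Phi\circ\exp\circ\iota$ and $D^{\mathfrak h}=\iota_*D^V$ to rewrite $(\ExpD)_*D^V$ as $\Phi_*\nabla^{H,\log}$. Your remark that curvature and torsion are natural under diffeomorphisms spells out the justification for the ``in particular'' clause, which the paper leaves implicit.
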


\begin{proof}
Since
$
\iota\circ\LogD=\log\circ\Phi^{-1},
$
differentiation gives
$$
\iota_*(\LogD)_*=(\log)_*(\Phi^{-1})_*.
$$
Using also $D^{\mathfrak h}=\iota_*D^V$, we obtain
$$
\begin{aligned}
\nabla^{\Delta}_UV&=(\Phi\circ\exp\circ\iota)_*\Bigl(D^V_{(\LogD)_*U}(\LogD)_*V\Bigr)\\
&=\Phi_*(\exp)_*\Bigl(D^{\mathfrak h}_{(\log)_*(\Phi^{-1})_*U}(\log)_*(\Phi^{-1})_*V\Bigr)\\
&=\Phi_*\Bigl((\nabla^{H,\log})_{(\Phi^{-1})_*U}(\Phi^{-1})_*V\Bigr).
\end{aligned}
$$
which proves the claim.
\end{proof}

\begin{definition}
Define a Riemannian metric $\cg^{\Delta}$ on $\Omega$ by
$$
\cg^{\Delta}_x(u,v):=Q_{\Delta}\bigl((d\LogD)_x u,(d\LogD)_x v\bigr),\qquad x\in\Omega,\quad u,v\in T_x\Omega.
$$
In other words, $\cg^{\Delta}$ is the pullback of the constant metric $Q_{\Delta}$ on $V$ through the global chart $\LogD$.
\end{definition}

For any $u\in V$, define 
$$\|u\|_{\Delta}:=Q_{\Delta}(u,u)^{1/2},\qquad u\in V.
$$

\begin{proposition}\label{prop:distance-log-vinberg}
The Riemannian distance of the metric $\cg^{\Delta}$ is
$$
d_{\Delta}(x,y)=\bigl\|\LogD(x)-\LogD(y)\bigr\|_{\Delta},\qquad x,y\in\Omega.
$$
Moreover, $(\Omega,\cg^\Delta)$ is complete, and the Levi-Civita connection of $\cg^{\Delta}$ is the flat torsion-free connection $\nabla^{\Delta}$.
\end{proposition}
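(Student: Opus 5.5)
The plan is to show that $\LogD:(\Omega,\cg^{\Delta})\to(V,Q_{\Delta})$ is an isometry onto a Euclidean space, and then read off every assertion from the corresponding fact on $V$. By the proposition giving the clan exponential and logarithm, $\LogD$ is a global diffeomorphism of $\Omega$ onto $V$. The definition of $\cg^{\Delta}$ says exactly that $\cg^{\Delta}=\LogD^{*}Q_{\Delta}$, where $Q_{\Delta}$ is regarded as a constant Riemannian metric on $V$; it is positive definite by Proposition~\ref{prop:QDelta-formula}. A bijective local isometry between Riemannian manifolds is a Riemannian isometry, so it preserves lengths of piecewise smooth curves and hence the induced distance functions.

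First, I will compute the distance. On $(V,Q_{\Delta})$ the metric has constant coefficients in linear coordinates, so the length of a curve $c$ is $\int\|\dot c\|_{\Delta}$. The triangle inequality for integrals (Minkowski) gives $\int_0^1\|\dot c\|_{\Delta}\ge\|c(1)-c(0)\|_{\Delta}$, with equality for the straight segment. Therefore the Riemannian distance on $V$ is $\|v-w\|_{\Delta}$. Transporting this through the isometry $\LogD$ yields $d_{\Delta}(x,y)=\|\LogD(x)-\LogD(y)\|_{\Delta}$. As a consistency check, the minimizing curves are $\ExpD$ of affine segments, that is, the Log-Vinberg paths $\gamma^{\Delta}_{x,y}$.

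Second, completeness. The metric space $(V,\|\cdot\|_{\Delta})$ is a finite-dimensional normed space and hence complete. Because $\LogD$ is a distance-preserving bijection, $(\Omega,d_{\Delta})$ is complete as a metric space. Hopf--Rinow then gives geodesic completeness. One can also see this directly: the geodesics $\ExpD(v+tw)$ are defined for all $t\in\R$.

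Third, the Levi-Civita connection. On $V$, the Levi-Civita connection of the constant metric $Q_{\Delta}$ is $D^V$, since all Christoffel symbols vanish in linear coordinates. Naturality of the Levi-Civita connection under isometries gives $\nabla^{LC}(\cg^{\Delta})=(\ExpD)_*D^V$. This is precisely the definition of $\nabla^{\Delta}$. Flatness and torsion-freeness were already recorded in Proposition~\ref{prop:nablaDelta-pushforward}, and they also follow directly from those of $D^V$.

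I do not expect a serious obstacle. The only point that needs care is the first step: one must justify that the Riemannian distance of a constant-coefficient metric on $V$ is the norm distance. Here I will make sure that the infimum is taken over piecewise smooth curves, and use that $\LogD$ maps such curves bijectively onto such curves, so that the infimum is preserved.
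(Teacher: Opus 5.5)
Your proposal is correct and follows essentially the same route as the paper: both use the global chart $\LogD$ to identify $(\Omega,\cg^{\Delta})$ isometrically with the Euclidean space $(V,Q_{\Delta})$, and then read off the distance formula, completeness, and the Levi-Civita connection from the Euclidean case. You simply spell out details the paper leaves implicit, namely Minkowski's inequality for curve lengths, Hopf--Rinow, and naturality of the Levi-Civita connection under isometries.
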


\begin{proof}
By construction, the chart $\LogD$ identifies $(\Omega,\cg^{\Delta})$ isometrically with the Euclidean space $(V,Q_{\Delta})$. The distance formula, completeness, and the assertion about the Levi-Civita connection therefore follow from their Euclidean counterparts.
\end{proof}

\begin{definition}
For $x,y\in\Omega$ and $0\le t\le 1$, the \emph{Log-Vinberg path} is
$$
\gamma^{\Delta}_{x,y}(t)=\ExpD\bigl((1-t)\LogD(x)+t\LogD(y)\bigr).
$$
Its midpoint
$$
x\lv y:=\gamma^{\Delta}_{x,y}(1/2)
$$
is called the \emph{Log-Vinberg mean}.
\end{definition}

\begin{proposition}\label{prop:log-vinberg-naturality}
Let $h:V\to V$ be an automorphism of the clan $(V,\Delta,e)$; that is,
$$
h(e)=e,
\qquad
h(u\Delta v)=h(u)\Delta h(v).
$$
Then $h(\Omega)=\Omega$ and
$$
\LogD(hx)=h\LogD(x),
\qquad
h\bigl(\gamma^\Delta_{x,y}(t)\bigr)=\gamma^\Delta_{h(x),h(y)}(t).
$$
Moreover, $h$ is an isometry of $(\Omega,\cg^\Delta)$.
\end{proposition}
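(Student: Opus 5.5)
The plan is to show first that a clan automorphism $h$ intertwines the left multiplications, and then to read off every assertion from the formula $\ExpD(x)=\exp(L_x)e$.

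Since $h(u\Delta v)=h(u)\Delta h(v)$, we have
$$
hL_uh^{-1}=L_{h(u)}\qquad\text{for all }u\in V.
$$
Exponentiating this identity gives $h\exp(L_u)h^{-1}=\exp(L_{h(u)})$. Using $h(e)=e$, we then get
$$
h\bigl(\ExpD(u)\bigr)=h\exp(L_u)h^{-1}h(e)=\exp(L_{h(u)})e=\ExpD(h(u)).
$$
So $h\circ\ExpD=\ExpD\circ h$ on all of $V$. Because $\ExpD:V\to\Omega$ is a bijection and $h$ is a bijection of $V$, this yields $h(\Omega)=\ExpD(h(V))=\Omega$. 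Applying $\LogD$ to both sides gives $\LogD(hx)=h\LogD(x)$ for all $x\in\Omega$.

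The path identity then follows from the linearity of $h$:
$$
h\bigl(\gamma^\Delta_{x,y}(t)\bigr)=\ExpD\bigl(h((1-t)\LogD x+t\LogD y)\bigr)=\ExpD\bigl((1-t)\LogD(hx)+t\LogD(hy)\bigr)=\gamma^\Delta_{h(x),h(y)}(t).
$$

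For the isometry claim, note that $\cg^\Delta$ is the pullback of $Q_\Delta$ under $\LogD$, and $\LogD\circ h=h\circ\LogD$. It therefore suffices to show that $h$ preserves $Q_\Delta$.

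This is the step that needs a real argument. From $L_{h(w)}=hL_wh^{-1}$ we get $\tr L_{h(w)}=\tr L_w$. Hence
$$
Q_\Delta(hu,hv)=\tr L_{h(u)\Delta h(v)}=\tr L_{h(u\Delta v)}=\tr L_{u\Delta v}=Q_\Delta(u,v).
$$
So $h$ is a linear isometry of $(V,Q_\Delta)$. Combined with the intertwining relation, this gives
$$
h^*\cg^\Delta=h^*\LogD^*Q_\Delta=\LogD^*h^*Q_\Delta=\LogD^*Q_\Delta=\cg^\Delta,
$$
using $\LogD\circ h=h\circ\LogD$ and the fact that $h$, being linear, is its own differential. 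This shows $h$ is an isometry of $(\Omega,\cg^\Delta)$.
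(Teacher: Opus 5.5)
Your proof is correct and follows the paper's argument essentially step for step. Like the paper, you derive the intertwining relation $hL_uh^{-1}=L_{h(u)}$, exponentiate it together with $h(e)=e$ to get $h\circ\ExpD=\ExpD\circ h$, and use invariance of the trace under conjugation to show that $h$ preserves $Q_\Delta$. The only difference is that you write out the pullback computation $h^*\cg^\Delta=\cg^\Delta$ explicitly, where the paper leaves it implicit.
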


\begin{proof}
The clan identity implies $L_{h(u)}=h L_u h^{-1}$. Hence
$$
h\ExpD(u)=h\exp(L_u)e
=\exp(L_{h(u)})e
=\ExpD(hu).
$$
Since $\ExpD(V)=\Omega$, this proves $h(\Omega)=\Omega$ and, after inversion, $\LogD(hx)=h\LogD(x)$. The covariance of the path follows by applying this identity to its definition. Finally,
$$
Q_\Delta(hu,hv)
=\tr L_{h(u\Delta v)}
=\tr\bigl(h L_{u\Delta v}h^{-1}\bigr)
=Q_\Delta(u,v),
$$
so the definition of $\cg^\Delta$ shows that $h$ is an isometry.
\end{proof}

\begin{theorem}\label{thm:nablaDelta-geodesics}
For every $x,y\in\Omega$, the curve
$$
\gamma^{\Delta}_{x,y}(t)=\ExpD\bigl((1-t)\LogD(x)+t\LogD(y)\bigr),
\qquad 0\le t\le 1,
$$
is the unique affinely parametrized $\nabla^{\Delta}$-geodesic joining $x$ to $y$ on $[0,1]$. It is also the unique $\cg^{\Delta}$-geodesic segment from $x$ to $y$. Consequently,
$$
x\lv y=\gamma^{\Delta}_{x,y}(1/2)
$$
is the midpoint of this $\cg^{\Delta}$-geodesic segment. 
Moreover,
$$
 d_{\Delta}(x,\gamma^{\Delta}_{x,y}(t))=t\,d_{\Delta}(x,y),
\qquad
d_{\Delta}(y,\gamma^{\Delta}_{x,y}(t))=(1-t)\,d_{\Delta}(x,y).
$$
In particular,
$$
d_{\Delta}(x,x\lv y)=d_{\Delta}(y,x\lv y)=\frac12\,d_{\Delta}(x,y).
$$
\end{theorem}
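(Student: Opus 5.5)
The plan is to reduce everything to Euclidean geometry through the global chart $\LogD:\Omega\to V$. By definition of $\nabla^{\Delta}$ and $\cg^{\Delta}$, the map $\LogD$ is an affine diffeomorphism from $(\Omega,\nabla^{\Delta})$ onto $(V,D^V)$. It is also an isometry from $(\Omega,\cg^{\Delta})$ onto the Euclidean space $(V,Q_{\Delta})$, as already used in Proposition~\ref{prop:distance-log-vinberg}. Affinely parametrized geodesics, minimizing segments and distances are all preserved by such maps, so each claim will be checked in $V$ and then transported back by $\ExpD$.

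First, I will show that a curve $c$ in $\Omega$ is a $\nabla^{\Delta}$-geodesic if and only if $\LogD\circ c$ satisfies $\ddot w=0$ in the linear coordinates of $V$. This holds because the Christoffel symbols of $\nabla^{\Delta}$ vanish in the chart $\LogD$. The affine curves $w(t)=(1-t)\LogD(x)+t\LogD(y)$ are therefore the only affinely parametrized geodesics on $[0,1]$ with endpoints $x,y$. Since $\LogD$ is a bijection, uniqueness follows, and applying $\ExpD$ gives exactly $\gamma^{\Delta}_{x,y}$.

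Second, by Proposition~\ref{prop:distance-log-vinberg}, the Levi-Civita connection of $\cg^{\Delta}$ is $\nabla^{\Delta}$. So $\cg^{\Delta}$-geodesics are the same curves as $\nabla^{\Delta}$-geodesics, which gives the second uniqueness claim; unique geodesics between two points in Euclidean space are the straight segments. The midpoint statement is then just evaluation at $t=1/2$.

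Finally, for the distances I will use the formula $d_{\Delta}(x,y)=\|\LogD(x)-\LogD(y)\|_{\Delta}$ from Proposition~\ref{prop:distance-log-vinberg}. Since $\LogD(\gamma^{\Delta}_{x,y}(t))-\LogD(x)=t(\LogD(y)-\LogD(x))$, homogeneity of the norm gives $t\,d_{\Delta}(x,y)$. The computation for $y$ gives $(1-t)\,d_{\Delta}(x,y)$ in the same way, and $t=1/2$ yields the midpoint identity. I expect no real obstacle. The only point needing care is to state precisely that $\LogD$ is simultaneously affine for $\nabla^{\Delta}$ and isometric for $\cg^{\Delta}$; both facts are immediate from the definitions and Proposition~\ref{prop:nablaDelta-pushforward}.
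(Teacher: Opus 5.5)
Your proposal is correct and follows essentially the same route as the paper. Both arguments transport everything through the global chart $\LogD$, where $\nabla^{\Delta}$-geodesics are exactly the curves with $\ddot v=0$, then use Proposition~\ref{prop:distance-log-vinberg} to identify $\nabla^{\Delta}$ with the Levi-Civita connection of $\cg^{\Delta}$, and finally read off the distance identities from $d_{\Delta}(x,y)=\|\LogD(x)-\LogD(y)\|_{\Delta}$ and the homogeneity of the norm.
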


\begin{proof}
Proposition~\ref{prop:distance-log-vinberg} identifies $\nabla^{\Delta}$ as the Levi-Civita connection of $\cg^{\Delta}$.

Let $z:[0,1]\to\Omega$ be a $C^2$ curve and set
$
v(t):=\LogD(z(t)).
$ Then
$$
(d\LogD)_{z(t)}\bigl(\nabla^{\Delta}_{\dot z(t)}\dot z(t)\bigr)
=
D^V_{\dot v(t)}\dot v(t)
=
\ddot v(t).
$$
Hence $z(t)$ is an affinely parametrized $\nabla^{\Delta}$-geodesic precisely when
$$
\ddot v(t)=0.
$$
The endpoint conditions $z(0)=x$ and $z(1)=y$ become
$
v(0)=\LogD(x)$ and $v(1)=\LogD(y).
$
The unique solution of this affine boundary-value problem in $V$ is
$$
v(t)=(1-t)\LogD(x)+t\LogD(y).
$$
Applying $\ExpD$ gives
$$
z(t)=\ExpD\bigl((1-t)\LogD(x)+t\LogD(y)\bigr)
=\gamma^{\Delta}_{x,y}(t).
$$
This proves both existence and uniqueness of the affinely parametrized $\nabla^{\Delta}$-geodesic from $x$ to $y$.

Finally, evaluating the same segment at $t=1/2$ gives
$$
\gamma^{\Delta}_{x,y}(1/2)
=\ExpD\left(\frac{\LogD(x)+\LogD(y)}{2}\right)
=x\lv y,
$$
so $x\lv y$ is the midpoint of the $\cg^{\Delta}$-geodesic segment.
The distance identities follow from Proposition~\ref{prop:distance-log-vinberg},
$$
 d_{\Delta}(x,\gamma^{\Delta}_{x,y}(t))
=
\bigl\|t(\LogD(y)-\LogD(x))\bigr\|_\Delta
=
t\,d_{\Delta}(x,y),
$$
  similarly
$$
 d_{\Delta}(y,\gamma^{\Delta}_{x,y}(t))=(1-t)\,d_{\Delta}(x,y).
$$

\end{proof}

\begin{remark}
The connection $\nabla^{\Delta}$ is different from both $\nabla^{\mathrm{cv}}$ and $\nabla^{\mathrm{cl}}$. In general, the geodesics $\gamma^{\Delta}_{x,y}$ and $\gamma^{\mathrm{cv}}_{x,y}$ are different. If $T_x$ and $T_y$ commute, then
$$
\gamma^{\Delta}_{x,y}(t)=\gamma^{\mathrm{cv}}_{x,y}(t)
$$
for all $t\in[0,1]$. In particular, on the diagonal subcone one has
$$
x\cv y=x\lv y.
$$
\end{remark}

The next proposition gives the formal properties that come directly from the logarithmic model and closely parallel the classical log-Euclidean case.

For $x,y\in\Omega$ and $t\in[0,1]$, set
$$x\odot_{\Delta,t} y:=\gamma^{\Delta}_{x,y}(t)=\ExpD\bigl((1-t)\LogD(x)+t\LogD(y)\bigr).$$

\begin{proposition}\label{prop:basic-log-vinberg-properties}
For all $x,y\in\Omega$ and $s,t, u\in[0,1]$, the following hold:
\begin{itemize}
\item[(1)]
$x\odot_{\Delta,0} y=x,$
$x\odot_{\Delta,1} y=y,$ 
$x\odot_{\Delta,t} x=x.
$
\item[(2)]
$x\odot_{\Delta,t} y=y\odot_{\Delta,1-t}x.
$
\item[(3)]
$\LogD(x\odot_{\Delta,t} y)=(1-t)\LogD(x)+t\LogD(y).
$
 \item[(4)]
$
\bigl(x\odot_{\Delta,s} y\bigr)\odot_{\Delta,t}\bigl(x\odot_{\Delta,u} y\bigr)=x\odot_{\Delta,(1-t)s+tu} y.
$
 \item[(5)] $h(x)\odot_{\Delta,t} h(y)=h(x\odot_{\Delta,t} y)$ for any automorphism of the clan $(V,\Delta,e)$.
 \end{itemize}
\end{proposition}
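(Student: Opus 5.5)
The plan is to reduce every item to elementary affine identities in $V$. The tool is the fact that $\LogD:\Omega\to V$ and $\ExpD:V\to\Omega$ are mutually inverse diffeomorphisms. First I will prove (3), since everything else follows from it. By definition $x\odot_{\Delta,t}y=\ExpD\bigl((1-t)\LogD(x)+t\LogD(y)\bigr)$. Applying $\LogD$ and using $\LogD\circ\ExpD=\mathrm{id}_V$ gives (3) immediately.

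For (1), I will substitute $t=0$, $t=1$, and $y=x$ into the affine combination. This gives $\LogD(x)$, $\LogD(y)$ and $\LogD(x)$ respectively, and I then apply $\ExpD\circ\LogD=\mathrm{id}_\Omega$. For (2), the identity
$$(1-t)\LogD(x)+t\LogD(y)=(1-(1-t))\LogD(y)+(1-t)\LogD(x)$$
shows that both sides of (2) are $\ExpD$ of the same vector.

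For (4), I set $a=\LogD(x)$ and $b=\LogD(y)$. By (3), the two arguments have logarithms $p=(1-s)a+sb$ and $q=(1-u)a+ub$. Using (3) again, the left side has logarithm $(1-t)p+tq$, and I expand this as
$$\bigl(1-((1-t)s+tu)\bigr)a+\bigl((1-t)s+tu\bigr)b.$$
This expansion is the only computation in the proof. The coefficients of $b$ add to $(1-t)s+tu$, and the total weight is $1$. Since $s,u\in[0,1]$, the new parameter $(1-t)s+tu$ also lies in $[0,1]$, so the right side is defined. Applying $\ExpD$ then finishes (4).

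For (5), I will invoke Proposition~\ref{prop:log-vinberg-naturality}. It gives $h(\Omega)=\Omega$, so $h(x),h(y)\in\Omega$, and it already states $h\bigl(\gamma^\Delta_{x,y}(t)\bigr)=\gamma^\Delta_{h(x),h(y)}(t)$, which is exactly (5). Concretely, the argument uses $\LogD\circ h=h\circ\LogD$ on $\Omega$ together with the linearity of $h$, so that $h$ commutes with affine combinations, and then applies $\ExpD$.

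No step is a genuine obstacle here. The one point needing care is checking that the reparametrized weight in (4) stays in $[0,1]$, which the convexity observation above handles.
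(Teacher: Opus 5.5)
Your proposal is correct and follows the same route as the paper: you reduce everything to affine identities in $V$ using the inverse pair $\LogD,\ExpD$, obtain (3) by applying $\LogD$ to the definition, get (4) by expanding the affine combination, and prove (5) by citing Proposition~\ref{prop:log-vinberg-naturality}. Your explicit coefficient check in (4), together with the remark that $(1-t)s+tu\in[0,1]$, spells out what the paper compresses into ``simplifying the affine combination.''
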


\begin{proof}
Properties (1) and (2) are immediate.

(3) Since $\ExpD$ maps $V$ onto $\Omega$, 
applying $\LogD$ to  $x\odot_{\Delta,t} y\in\Omega$  gives
$$
\LogD(x\odot_{\Delta,t} y)=(1-t)\LogD(x)+t\LogD(y),
$$
because $\LogD$ and $\ExpD$ are inverse maps.  

(4) follows by applying $\LogD$ and simplifying the affine combination in $V$. Property (5) follows from Proposition~\ref{prop:log-vinberg-naturality}.
\end{proof}
 
 \begin{remark}\label{rem:log-vinberg-not-H-equivariant}
Naturality under clan automorphisms should not be confused with covariance under the simply transitive $H$-action. The Log-Vinberg mean is not $H$-equivariant in general. For example, let
$$
x=e,
\qquad
y=\diag(16,1,1),
\qquad
h=\begin{pmatrix}1&0&0\\0&1&0\\1&0&1\end{pmatrix}.
$$
A direct calculation in logarithmic Cholesky coordinates gives
$$
\bigl((hxh^\top)\lv(hyh^\top)\bigr)_{13}
=\frac{8}{3}+\frac{1}{\log 2}
\ne 4
=\bigl(h(x\lv y)h^\top\bigr)_{13}.
$$
Thus the Cholesky-Vinberg and Log-Vinberg means have genuinely different covariance groups.
\end{remark}

\begin{proposition}
\label{prop:homogeneity-log-vinberg}
For every $x\in\Omega$ and every $\lambda>0$,
$$
\LogD(\lambda x)=\LogD(x)+(\log\lambda)e,
$$
where $e=I_3$. Consequently, for all $\alpha,\beta>0$, all $x,y\in\Omega$, and all $0\le t\le 1$,
$$
(\alpha x)\odot_{\Delta,t}(\beta y)=\alpha^{1-t}\beta^t\,(x\odot_{\Delta,t}y).
$$
\end{proposition}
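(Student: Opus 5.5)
The plan is to work at the level of Cholesky factors and use the factorization $\LogD=\iota^{-1}\circ\log\circ\Phi^{-1}$. For $\lambda>0$, the lower Cholesky factor of $\lambda x$ is $T_{\lambda x}=\sqrt{\lambda}\,T_x$. This holds because $\sqrt{\lambda}T_x\in H$ and $\Phi(\sqrt\lambda T_x)=\lambda x$, and the Cholesky factor is unique since $\Phi$ is a diffeomorphism. The scalar matrix $\sqrt{\lambda}I_3$ commutes with $T_x$, so the principal logarithm splits:
$$
\log(\sqrt\lambda\,T_x)=\tfrac12(\log\lambda)I_3+\log T_x .
$$
To justify this cleanly, I would note that $\exp\bigl(\tfrac12(\log\lambda)I_3+\log T_x\bigr)=\sqrt\lambda\,T_x$ because the two summands commute, and that the left-hand argument lies in $\hLie$. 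Since $\exp:\hLie\to H$ is bijective, this element must be the principal logarithm.

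Next I apply the symmetrization $A\mapsto A+A^\top$, which inverts $\iota$. This gives
$$
\LogD(\lambda x)=\log T_x+(\log T_x)^\top+(\log\lambda)I_3=\LogD(x)+(\log\lambda)e,
$$
which is the first claim. One could also read it off Proposition~\ref{prop:explicit-log-coordinates}. There $v_1,v_2,v_3$ shift by $\log\lambda$, since $S(\lambda x)=\lambda S(x)$, and $v_4,v_5$ are unchanged because the divided-difference factors scale by $\lambda^{-1/2}$ while $x_4/\sqrt{x_1}$ scales by $\lambda^{1/2}$.

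For the second claim, I use part (3) of Proposition~\ref{prop:basic-log-vinberg-properties}:
$$
\LogD\bigl((\alpha x)\odot_{\Delta,t}(\beta y)\bigr)=(1-t)\LogD(x)+t\LogD(y)+\bigl((1-t)\log\alpha+t\log\beta\bigr)e .
$$
This equals $\LogD(z)+\log(\alpha^{1-t}\beta^t)\,e$, where $z=x\odot_{\Delta,t}y$. By the first claim, that is $\LogD(\alpha^{1-t}\beta^t z)$. Applying $\ExpD$, which is injective as the inverse of $\LogD$, gives the result.

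The only point requiring care is the logarithm of a product. I will argue it through commutation with a scalar and injectivity of $\exp$ on $\hLie$, rather than a general identity $\log(AB)=\log A+\log B$, which fails for noncommuting factors.
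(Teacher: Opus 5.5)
Your proof is correct and is essentially the paper's argument run in the opposite direction. The paper shows $\ExpD(u+se)=\exp\bigl(\underline u+\tfrac s2 I_3\bigr)\exp\bigl(\underline u+\tfrac s2 I_3\bigr)^{\top}=e^{s}\ExpD(u)$ and then applies $\LogD$, whereas you use $T_{\lambda x}=\sqrt{\lambda}\,T_x$ and split the logarithm via the same commuting-scalar identity together with bijectivity of $\exp:\hLie\to H$; the second claim is derived the same way in both (one small wording point: what your last step uses is $\ExpD\circ\LogD=\mathrm{id}_\Omega$, i.e.\ injectivity of $\LogD$, not injectivity of $\ExpD$).
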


\begin{proof}
Let
$
u:=\LogD(x)\in V.
$
By definition,
$
\ExpD(u)=\exp(\underline{u})\,\exp(\underline{u})^\top.
$
Since $\underline e=\frac12 I_3$, for every $s\in\mathbb R$ we have
$$
\ExpD(u+se)
=
\exp\bigl(\underline{u}+\tfrac{s}{2}I_3\bigr)
\exp\bigl(\underline{u}+\tfrac{s}{2}I_3\bigr)^\top= e^s\ExpD(u).
$$
 Choosing
$
s=\log\lambda
$
gives
$$
\ExpD\bigl(u+(\log\lambda)e\bigr)=\lambda\ExpD(u)=\lambda x.
$$
Applying $\LogD$ yields
$$
\LogD(\lambda x)=\LogD(x)+(\log\lambda)e.
$$

Using this identity at both endpoints gives
$$
\LogD(\alpha x)=\LogD(x)+(\log\alpha)e,
\qquad
\LogD(\beta y)=\LogD(y)+(\log\beta)e.
$$
Hence
$$
\begin{aligned}
(\alpha x)\odot_{\Delta,t}(\beta y)
&=
\ExpD\Bigl((1-t)\LogD(\alpha x)+t\LogD(\beta y)\Bigr)\\
&=
\ExpD\Bigl((1-t)\LogD(x)+t\LogD(y)+\bigl((1-t)\log\alpha+t\log\beta\bigr)e\Bigr).
\end{aligned}
$$
Applying the first part once more, we obtain
$$
(\alpha x)\odot_{\Delta,t}(\beta y)
=
\exp\bigl((1-t)\log\alpha+t\log\beta\bigr)
\ExpD\bigl((1-t)\LogD(x)+t\LogD(y)\bigr).
$$
Therefore
$$
(\alpha x)\odot_{\Delta,t}(\beta y)=\alpha^{1-t}\beta^t\,(x\odot_{\Delta,t}y).
$$
\end{proof}

\begin{proposition}\label{prop:determinant-log-vinberg}
For every $u\in V$,
$$
\det\bigl(\ExpD(u)\bigr)=\exp\bigl(2\,\tr(\underline{u} )\bigr)=\exp\bigl(\tr( {u} )\bigr).
$$
Consequently, for all $x,y\in\Omega$ and $0\le t\le 1$,
\begin{equation}\label{det-interp}
\det(x\odot_{\Delta,t} y)=\det(x)^{1-t}\det(y)^t.
\end{equation}
In particular,
$$
\det(x\odot_{\Delta} y)=\sqrt{\det(x)\det(y)}.
$$
\end{proposition}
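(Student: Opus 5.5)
The plan is to work through the factorization $\ExpD=\Phi\circ\exp\circ\iota$ established in Section~\ref{Vinberg-cone}, and then use linearity of the trace in the logarithmic coordinates.

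First, for $u\in V$ we have $\ExpD(u)=\exp(\underline u)\exp(\underline u)^{\top}$. Multiplicativity of the determinant and $\det M^{\top}=\det M$ give
$$
\det\ExpD(u)=\bigl(\det\exp(\underline u)\bigr)^2 .
$$
By the Jacobi identity $\det\exp(A)=e^{\tr A}$, this equals $\exp(2\tr\underline u)$. Since $\underline u$ has diagonal entries $u_1/2,u_2/2,u_3/2$, we get $2\tr\underline u=u_1+u_2+u_3=\tr u$. Equivalently, $u=\underline u+\underline u^{\top}$ gives $\tr u=2\tr\underline u$ directly. This proves the first identity.

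For the interpolation \eqref{det-interp}, apply the first identity to $u=\LogD(x)$, using $\ExpD\circ\LogD=\mathrm{id}$. This gives $\det x=e^{\tr\LogD(x)}$, that is,
$$
\tr\LogD(x)=\log\det x .
$$
By Proposition~\ref{prop:basic-log-vinberg-properties}(3), $\LogD(x\odot_{\Delta,t}y)=(1-t)\LogD(x)+t\LogD(y)$. Taking traces, which are linear, yields
$$
\log\det(x\odot_{\Delta,t}y)=(1-t)\log\det x+t\log\det y .
$$
Exponentiating gives \eqref{det-interp}, and the case $t=1/2$ gives the midpoint statement.

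There is no real obstacle here. The only point to check is that the principal logarithm and the Jacobi formula apply, and this holds because $\exp:\hLie\to H$ is a global diffeomorphism. As a cross-check, one can use the explicit coordinates of Proposition~\ref{prop:explicit-log-coordinates}: there $v_1+v_2+v_3=\log(x_1x_2S(x))$, and $x_1x_2S(x)=\det x$ by the Schur complement formula.
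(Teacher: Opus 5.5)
Your proof is correct and follows essentially the same route as the paper: you write $\ExpD(u)=\exp(\underline u)\exp(\underline u)^{\top}$, apply $\det\exp(A)=e^{\tr A}$, and use linearity of the trace on the affine combination $(1-t)\LogD(x)+t\LogD(y)$. One small point: Jacobi's formula holds for every square matrix, so it does not need the fact that $\exp:\hLie\to H$ is a diffeomorphism.
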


\begin{proof}
By definition,
$
\ExpD(u)=\exp(\underline{u})\exp(\underline{u})^{\top},
$
so
$$
\det\bigl(\ExpD(u)\bigr)=\det(\exp(\underline{u}))^2=\exp\bigl(2\tr(\underline{u})\bigr).
$$
If $u=\LogD(x)$ and $v=\LogD(y)$, then
$$
x\odot_{\Delta,t} y=\ExpD((1-t)u+tv),
$$
and linearity of $\iota$ and of the trace yields the determinant law \eqref{det-interp}.
\end{proof}

\subsection{The Karcher mean and the abelian Lie group structure on \texorpdfstring{$\Omega$}{Omega}}

We now study the barycentric property associated with the Log-Vinberg mean and the metric $d_\Delta$.
\begin{theorem}\label{thm:weighted-log-vinberg-barycenter}
Let $x_1,\dots,x_N\in\Omega$, and let $w_1,\dots,w_N>0$ satisfy
$
\sum_{i=1}^N w_i=1.
$
Define
$$
G_w^\Delta(x_1,\dots,x_N)
:=
\ExpD\left(\sum_{i=1}^N w_i\,\LogD(x_i)\right).
$$
Then $G_w^\Delta(x_1,\dots,x_N)$ is the unique minimizer on $\Omega$ of the weighted Fr\'echet functional
$$
F(z):=\sum_{i=1}^Nw_i\,d_\Delta^2(z,x_i).
$$
That is,
$$
G_w^\Delta(x_1,\dots,x_N)
=
\operatorname*{arg\,min}_{z\in\Omega}
\sum_{i=1}^N w_i\,d_\Delta^2(z,x_i).
$$
Moreover,
$$
\det\bigl(G_w^\Delta(x_1,\dots,x_N)\bigr)
=
\prod_{i=1}^N \det(x_i)^{w_i}.
$$
\end{theorem}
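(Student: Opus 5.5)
The plan is to transport the problem to the Euclidean space $(V,Q_\Delta)$ through the global chart $\LogD$. By Proposition~\ref{prop:distance-log-vinberg}, $d_\Delta(z,x_i)=\|\LogD(z)-\LogD(x_i)\|_\Delta$ for all $z\in\Omega$. Since $\LogD:\Omega\to V$ is a bijection with inverse $\ExpD$, minimizing $F$ over $\Omega$ is equivalent to minimizing
$$
\widetilde F(v):=\sum_{i=1}^N w_i\,\|v-v_i\|_\Delta^2,\qquad v_i:=\LogD(x_i),
$$
over $v\in V$. A minimizer $z$ of $F$ then corresponds to a minimizer $v=\LogD(z)$ of $\widetilde F$, and conversely.

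Next I will minimize $\widetilde F$ by the standard Euclidean barycenter identity. Put $\bar v=\sum_i w_iv_i$ and use $\sum_i w_i=1$. Expanding $\|v-v_i\|_\Delta^2=\|(v-\bar v)+(\bar v-v_i)\|_\Delta^2$ with the inner product $Q_\Delta$, the cross terms sum to $2Q_\Delta\bigl(v-\bar v,\sum_i w_i(\bar v-v_i)\bigr)=0$. This gives
$$
\widetilde F(v)=\|v-\bar v\|_\Delta^2+\sum_{i=1}^N w_i\|\bar v-v_i\|_\Delta^2 .
$$
Here $Q_\Delta$ is positive definite by \eqref{Qh}, so $\widetilde F(v)>\widetilde F(\bar v)$ for all $v\neq\bar v$. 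Hence $\bar v$ is the unique minimizer, and $z=\ExpD(\bar v)=G_w^\Delta(x_1,\dots,x_N)$ is the unique minimizer of $F$.

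For the determinant formula I will apply Proposition~\ref{prop:determinant-log-vinberg}, which gives $\det\ExpD(u)=\exp(\tr u)$. Applied with $u=\LogD(x_i)$, it yields $\det x_i=\exp(\tr v_i)$. By linearity of the trace,
$$
\det G_w^\Delta=\exp\Bigl(\sum_i w_i\tr v_i\Bigr)=\prod_i(\det x_i)^{w_i}.
$$
No step is a real obstacle. The only points needing care are that the distance formula is global (it holds by completeness and the isometry with Euclidean space in Proposition~\ref{prop:distance-log-vinberg}), and that uniqueness transfers through the bijection $\LogD$.
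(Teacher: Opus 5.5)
Your proposal is correct and follows essentially the same route as the paper. You transport the problem to $(V,Q_\Delta)$ via $\LogD$, complete the square around $\bar v=\sum_i w_i v_i$ (your variance form of the decomposition is equivalent to the paper's), and read off the determinant from $\det\ExpD(u)=\exp(\tr u)=\exp\bigl(2\tr(\underline{u})\bigr)$.
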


\begin{proof}
Put $u_i=\LogD(x_i)$, $u=\LogD(z)$, and $\bar u=\sum_iw_iu_i$. Since $d_\Delta(z,x_i)=\|u-u_i\|_\Delta$ and $\sum_iw_i=1$, completion of the square gives
$$
F(z)=\|u-\bar u\|_\Delta^2+
\sum_{i=1}^Nw_i\|u_i\|_\Delta^2-\|\bar u\|_\Delta^2.
$$
The last two terms are independent of $u$, so $F(z)$ is minimized exactly when
$$
u=\bar u=\sum_{i=1}^N w_i\,\LogD(x_i).
$$


Proposition~\ref{prop:determinant-log-vinberg} and linearity of $u\mapsto\operatorname{tr}(\underline u)$ give
$$
\det\bigl(G_w^\Delta(x_1,\dots,x_N)\bigr)
=\exp\left(2\sum_{i=1}^Nw_i\operatorname{tr}(\underline{u_i})\right)
=\prod_{i=1}^N\det(x_i)^{w_i}.
$$
\end{proof}

Combining the weighted barycenter formula with Proposition~\ref{prop:explicit-log-coordinates} yields the following    explicit  coordinate formula.

\begin{corollary}\label{cor:explicit-log-barycenter-coordinates}
Let $x_1,\dots,x_N\in\Omega$ and let $w_1,\dots,w_N>0$ satisfy $\sum_{i=1}^N w_i=1$. Write
$$
\LogD(x_i)=(v_{i1},v_{i2},v_{i3},v_{i4},v_{i5})\in V,
$$
and set
$
\bar v_j:=\sum_{i=1}^N w_i v_{ij}$, $j=1,\dots,5.
$
Then
$$
G_w^{\Delta}(x_1,\dots,x_N)=\ExpD(\bar v_1,\dots,\bar v_5),
$$
where, with
$$
a_1=e^{\bar v_1/2},\qquad a_2=e^{\bar v_2/2},\qquad a_3=e^{\bar v_3/2},
$$
$$
b_4=
\begin{cases}
2\bar v_4\dfrac{a_1-a_3}{\bar v_1-\bar v_3}, & \bar v_1\ne \bar v_3,\\[1ex]
\bar v_4a_1, & \bar v_1=\bar v_3,
\end{cases}
\qquad
b_5=
\begin{cases}
2\bar v_5\dfrac{a_2-a_3}{\bar v_2-\bar v_3}, & \bar v_2\ne \bar v_3,\\[1ex]
\bar v_5a_2, & \bar v_2=\bar v_3,
\end{cases}
$$
one has
$$
G_w^{\Delta}(x_1,\dots,x_N)=
\begin{pmatrix}
a_1^2&0&a_1b_4\\
0&a_2^2&a_2b_5\\
a_1b_4&a_2b_5&b_4^2+b_5^2+a_3^2
\end{pmatrix}.
$$
\end{corollary}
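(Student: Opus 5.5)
The corollary follows by composing Theorem~\ref{thm:weighted-log-vinberg-barycenter} with the inverse formula of Proposition~\ref{prop:explicit-log-coordinates}. First, Theorem~\ref{thm:weighted-log-vinberg-barycenter} gives by definition
$$
G_w^\Delta(x_1,\dots,x_N)=\ExpD\Bigl(\sum_{i=1}^N w_i\LogD(x_i)\Bigr).
$$
Since the identification of $V$ with $\R^5$ through the coordinates $(v_1,\dots,v_5)$ is linear, the $j$-th coordinate of $\sum_i w_i\LogD(x_i)$ is $\sum_i w_iv_{ij}=\bar v_j$. Hence $G_w^\Delta(x_1,\dots,x_N)=\ExpD(\bar v_1,\dots,\bar v_5)$.

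Second, we apply the converse part of Proposition~\ref{prop:explicit-log-coordinates} with $v=\bar v$. It sets $a_k=e^{\bar v_k/2}$ and defines $b_4,b_5$ by the divided-difference formulas, including the coincident cases $\bar v_1=\bar v_3$ and $\bar v_2=\bar v_3$. It then yields exactly the displayed matrix $\Phi(\exp(\underline{\bar v}))$.

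The only point to check is that the case split matches. The condition $a_1=a_3$ is equivalent to $\bar v_1=\bar v_3$ because $t\mapsto e^{t/2}$ is injective. In the equal case the limit of
$$
2\bar v_4\,\frac{a_1-a_3}{\bar v_1-\bar v_3}
$$
is $\bar v_4a_1$, which agrees with the derivative of $e^{t/2}$ times $2\bar v_4$. This is already built into the proposition, so there is no real obstacle and the argument is a direct substitution.
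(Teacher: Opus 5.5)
Your proposal is correct and matches the paper's argument. The paper likewise obtains the corollary by writing $G_w^\Delta$ as $\ExpD$ of the weighted average $\bar v$ of the logarithmic coordinates (Theorem~\ref{thm:weighted-log-vinberg-barycenter}) and then substituting $v=\bar v$ into the explicit inverse formula of Proposition~\ref{prop:explicit-log-coordinates}; your check of the coincident case $\bar v_1=\bar v_3$ is consistent but not needed, since that proposition already splits on $v_1\ne v_3$ directly.
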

\bigskip

The clan logarithmic chart also gives $\Omega$ a natural abelian Lie group structure.

\begin{definition}
For $x,y\in\Omega$, define
$$
x\oplus_{\Delta} y:=\ExpD\bigl(\LogD(x)+\LogD(y)\bigr).
$$
Define also the $\Delta$-inverse and the $\Delta$-powers by
$$
x^{-\Delta}:=\ExpD\bigl(-\LogD(x)\bigr),\quad
x^{\Delta,t}:=\ExpD\bigl(t\,\LogD(x)\bigr),\;\; t\in\mathbb R.
$$
\end{definition}

\begin{proposition}\label{prop:delta-group-law}
The operation $\oplus_{\Delta}$ makes $\Omega$ into an abelian Lie group, and the map $\LogD:(\Omega,\oplus_{\Delta})\to(V,+)$ is a Lie group isomorphism. Moreover, for all $a,x,y\in\Omega$ and $0\le t\le 1$,
$$
a\oplus_{\Delta}(x\odot_{\Delta,t} y)=(a\oplus_{\Delta}x)\odot_{\Delta,t}(a\oplus_{\Delta}y),
$$
$$
d_{\Delta}(a\oplus_{\Delta}x,a\oplus_{\Delta}y)=d_{\Delta}(x,y),
$$
and
$$
(x\odot_{\Delta,t} y)^{-\Delta}=x^{-\Delta}\odot_{\Delta,t} y^{-\Delta}.
$$
\end{proposition}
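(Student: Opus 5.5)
The whole argument is transport of structure through the global chart $\LogD:\Omega\to V$, which is a diffeomorphism by the earlier proposition on the clan exponential. Defining $x\oplus_\Delta y$ by $\ExpD(\LogD x+\LogD y)$ means that $\LogD(x\oplus_\Delta y)=\LogD x+\LogD y$ by construction. Hence $\oplus_\Delta$ is the unique group law on $\Omega$ that makes $\LogD$ a group isomorphism onto $(V,+)$.

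\emph{Group law.} Associativity and commutativity follow directly from those of $+$ on $V$. The identity element is $\ExpD(0)=\Phi(\exp 0)=e$, and the inverse of $x$ is $x^{-\Delta}$. Smoothness of the multiplication $(x,y)\mapsto x\oplus_\Delta y$ and of the inversion follows because they are compositions of the smooth maps $\LogD\times\LogD$, addition (resp.\ negation) on $V$, and $\ExpD$. Therefore $(\Omega,\oplus_\Delta)$ is an abelian Lie group, and $\LogD$ is a Lie group isomorphism whose inverse is $\ExpD$.

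\emph{The three identities.} Apply $\LogD$ to each side and use item (3) of Proposition~\ref{prop:basic-log-vinberg-properties}, which reads $\LogD(x\odot_{\Delta,t}y)=(1-t)\LogD x+t\LogD y$. Write $a'=\LogD a$, $x'=\LogD x$, $y'=\LogD y$.

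For the first identity, both sides have logarithm $a'+(1-t)x'+ty'$. This uses $(1-t)+t=1$ to split $a'$ as $(1-t)a'+ta'$.

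For the third identity, both sides have logarithm $-(1-t)x'-ty'$.

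For the second identity, Proposition~\ref{prop:distance-log-vinberg} gives $d_\Delta(a\oplus_\Delta x,a\oplus_\Delta y)=\|(a'+x')-(a'+y')\|_\Delta=\|x'-y'\|_\Delta=d_\Delta(x,y)$.

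Injectivity of $\LogD$ then converts each equality of logarithms into the corresponding equality in $\Omega$.

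There is no serious obstacle. The only point needing care is the identity element. Smoothness of the structure maps needs no more than the fact that $\LogD$ and $\ExpD$ are mutually inverse diffeomorphisms, which is already established.
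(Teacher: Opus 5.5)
Your proposal is correct and follows the paper's proof: both arguments transport everything through the global chart $\LogD$, which identifies $(\Omega,\oplus_{\Delta})$ with $(V,+)$ and identifies $x\odot_{\Delta,t}y$ with the affine interpolation of the logarithms. You spell out the checks the paper calls immediate, namely the identity element $e=\ExpD(0)$, smoothness of the structure maps, and the three computations in $V$, and each of these is correct.
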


\begin{proof}
All assertions are immediate after applying $\LogD$, because $\LogD$ identifies $(\Omega,\oplus_{\Delta})$ with the additive group $(V,+)$ and identifies $x\odot_{\Delta,t}y$ with the affine interpolation of $\LogD(x)$ and $\LogD(y)$.
\end{proof}

\begin{remark}
There are thus three natural metric pictures on the Vinberg cone:
$$
(\nabla^{H,\ell},\cg^{H,\ell})\longleftrightarrow (\nabla^{\mathrm{cv}},\cg^{\mathrm{can}}),
$$
$$
(\nabla^{H,r},\cg^{H,r})\longleftrightarrow (\nabla^{\mathrm{cl}},\cg^{\mathrm{cl}}),
$$
$$
(\nabla^{H,\log},\cg^{H,\log})\longleftrightarrow (\nabla^{\Delta},\cg^{\Delta}).
$$
The first two are metric connections with torsion, while the third is torsion-free and is the Levi-Civita connection of a flat metric.
\end{remark}

\section{Restriction of Lin's Log-Cholesky metric to
\texorpdfstring{$\Omega$}{Omega}}\label{section:Lin-type} 

We now restrict Lin's Log-Cholesky metric to $H$ and transport
it to $\Omega$. For
$$
T=\begin{pmatrix}
a_1&0&0\\
0&a_2&0\\
u_4&u_5&a_3
\end{pmatrix}\in H,
$$
define
$$
\Psi(T)=(\log a_1,\log a_2,\log a_3,u_4,u_5).
$$
By \cite[Section~3.1]{Lin2019}, the restricted metric is
$$
\cg_H^{\mathrm{LC}}
=\sum_{i=1}^3 d(\log a_i)^2+du_4^2+du_5^2.
$$
We denote by $\cg^{\mathrm{LC}}$ its transport to $\Omega$
through $\Phi(T)=TT^{\top}$. This is precisely the restriction
to $\Omega$ of the ambient Log-Cholesky metric.

The following proposition specializes Lin's geodesic and mean
formulas to our cone; see
\cite[Proposition~3 and Corollary~12]{Lin2019}.

\begin{proposition}\label{prop:lin-flat-metric-on-omega}
The map $G=\Psi\circ\Phi^{-1}$ is a global isometry from
$(\Omega,\cg^{\mathrm{LC}})$ onto Euclidean space $\R^5$.
In particular, $\cg^{\mathrm{LC}}$ is flat and complete.
Moreover, $\Omega$ is totally geodesic in the ambient SPD cone
endowed with the Log-Cholesky metric.

Let $x,y\in\Omega$ have Cholesky factors
$$
T_x=\begin{pmatrix}
a_1&0&0\\
0&a_2&0\\
u_4&u_5&a_3
\end{pmatrix},
\qquad
T_y=\begin{pmatrix}
b_1&0&0\\
0&b_2&0\\
v_4&v_5&b_3
\end{pmatrix}.
$$
The unique affinely parametrized geodesic joining $x$ to $y$
on $[0,1]$ is
$$
\gamma^{\mathrm{LC}}_{x,y}(t)=\Phi(T(t)),
$$
where
$$
T(t)=
\begin{pmatrix}
a_1^{1-t}b_1^t&0&0\\
0&a_2^{1-t}b_2^t&0\\
(1-t)u_4+tv_4&(1-t)u_5+tv_5&a_3^{1-t}b_3^t
\end{pmatrix}.
$$
Its midpoint is
$$
x\#_{\mathrm{LC}}y=\Phi(T(1/2)).
$$
Both the geodesic and its midpoint preserve the prescribed
zero pattern of $\Omega$.
\end{proposition}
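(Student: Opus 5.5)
The plan is to work entirely through the global chart $G=\Psi\circ\Phi^{-1}$. First, $\Phi:H\to\Omega$ is a diffeomorphism by the earlier proposition. Second, $\Psi:H\to\R^5$, $T\mapsto(\log a_1,\log a_2,\log a_3,u_4,u_5)$, is a diffeomorphism: its inverse $(s_1,s_2,s_3,u_4,u_5)\mapsto$ the matrix with diagonal $e^{s_i}$ and entries $u_4,u_5$ below the diagonal is smooth. Since $\cg_H^{\mathrm{LC}}=\Psi^*(\text{Euclidean metric})$ by its defining formula, and $\cg^{\mathrm{LC}}=(\Phi^{-1})^*\cg_H^{\mathrm{LC}}$ by definition, $G$ is an isometry onto $\R^5$. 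Flatness and completeness then follow, because both properties are preserved under isometry.

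For the totally geodesic claim, I would use Lin's description in \cite{Lin2019}. For the ambient cone $\Sym^{++}(3,\R)$, the Log-Cholesky chart sends a lower Cholesky factor $L$ to $(\log L_{ii},\,L_{ij})_{i>j}$ and is an isometry onto Euclidean $\R^6$. Under this chart, $\Omega$ corresponds exactly to the linear subspace $\{L_{21}=0\}$. The key fact here is that the Cholesky factor of $x\in\Omega$ lies in $H$, by uniqueness of the Cholesky factorization together with formula \eqref{Tx}. A linear subspace of Euclidean space is totally geodesic, and the restricted metric is the metric induced on that subspace, which is $\cg_H^{\mathrm{LC}}$. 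This also confirms the statement that $\cg^{\mathrm{LC}}$ is the restriction of the ambient metric.

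The geodesics are obtained from straight lines in the coordinates $G$. Given $x,y$, the unique affinely parametrized Euclidean segment from $G(x)$ to $G(y)$ on $[0,1]$ is $(1-t)G(x)+tG(y)$. Pulling this back by $\Psi^{-1}$ gives diagonal entries $\exp((1-t)\log a_i+t\log b_i)=a_i^{1-t}b_i^t$ and off-diagonal entries $(1-t)u_j+tv_j$, which is exactly $T(t)$. Uniqueness transfers through the isometry. The midpoint formula is the evaluation at $t=1/2$.

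Sparsity preservation is immediate, since $T(t)\in H$ and $\Phi(H)=\Omega$ by \eqref{PhiT}. No step is a real obstacle. The only point needing care is the identification of $\Omega$ with a coordinate subspace in Lin's ambient chart, which rests on uniqueness of the Cholesky factor.
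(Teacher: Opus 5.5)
Your proposal is correct and follows essentially the same route as the paper: $\cg^{\mathrm{LC}}$ is the pullback of the Euclidean metric through $G=\Psi\circ\Phi^{-1}$, $\Omega$ is totally geodesic because it corresponds to the coordinate hyperplane $\{T_{21}=0\}$ in Lin's global Log-Cholesky chart, and the geodesics are straight lines in $\Psi$-coordinates. You also spell out why $\Omega$ maps exactly onto that hyperplane: the ambient Cholesky factor of $x\in\Omega$ lies in $H$ by uniqueness, and conversely $\Phi(H)=\Omega$. The paper leaves this step implicit.
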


\begin{proof}
The map $\Psi$ is a global diffeomorphism from $H$ onto $\R^5$,
and $\cg_H^{\mathrm{LC}}$ is the pullback of the Euclidean
metric by $\Psi$. Thus $G=\Psi\circ\Phi^{-1}$ is a global
isometry, which proves flatness and completeness.

In the global Log-Cholesky coordinates on the full Cholesky
space, $H$ is the coordinate hyperplane defined by $T_{21}=0$.
Hence $H$ is totally geodesic, and so is its image $\Omega$
under the Cholesky isometry.

Finally, $T(t)\in H$ and
$$
\Psi(T(t))=(1-t)\Psi(T_x)+t\Psi(T_y).
$$
This gives the geodesic formula. Taking $t=1/2$ gives the
midpoint.
\end{proof}

\begin{remark}
At the Cholesky factors, Lin's interpolation is geometric on the
diagonal and arithmetic in the strictly lower entries.
Unlike the Cholesky-Vinberg mean, its midpoint is not
$H$-equivariant in general.

This does not contradict Lin's bi-invariance result: that
result concerns the abelian group law introduced in
\cite[Section~3.3]{Lin2019}, rather than ordinary matrix
multiplication. Here $H$ acts on $\Omega$ by congruence,
$x\mapsto hxh^{\top}$.

For example, let
$$
x=I_3,
\qquad
y=\begin{pmatrix}
4&0&2\\
0&9&0\\
2&0&5
\end{pmatrix},
\qquad
h=\begin{pmatrix}
1&0&0\\
0&2&0\\
1&0&1
\end{pmatrix}\in H.
$$
The midpoint formula gives
$$
\bigl((hxh^{\top})\#_{\mathrm{LC}}(hyh^{\top})\bigr)_{13}
=2\sqrt{2},
$$
whereas
$$
\bigl(h(x\#_{\mathrm{LC}}y)h^{\top}\bigr)_{13}
=2+\frac{\sqrt{2}}{2}.
$$
Thus the restricted Lin midpoint is not $H$-equivariant.

It also differs from the Log-Vinberg midpoint. For the same
$x$ and $y$, since $x=I_3$, one has
$x\lv y=\Phi(T_y^{1/2})$, where $T_y^{1/2}$ is the principal
matrix square root. Consequently,
$$
(x\lv y)_{13}
=\frac12
\ne
\frac{\sqrt{2}}2
=(x\#_{\mathrm{LC}}y)_{13}.
$$
\end{remark}

 \section{Final remarks}\label{sec:conclusion}

The Cholesky-Vinberg and Log-Vinberg means provide two ways
to interpolate on the   Vinberg cone while
preserving its prescribed zero pattern. Both interpolate
the determinant geometrically and therefore avoid determinant
swelling. Their geometric origins are different. The
Cholesky-Vinberg mean uses multiplicative interpolation in
the simply transitive triangular group $H$ and is equivariant
under its action. The Log-Vinberg mean uses affine interpolation
in the clan logarithmic coordinates. These coordinates give
$\Omega$ a complete flat Riemannian metric, explicit weighted
Fr\'echet means, and an abelian Lie group structure.

The relation with the canonical Hessian metric is more subtle.
The Cholesky-Vinberg paths are geodesics of two flat metric
connections with torsion, one of which is compatible with
this metric. They are not, in general, geodesics of its
Levi-Civita connection, and their midpoints need not coincide
with the corresponding Riemannian midpoints. 
Compatibility with the canonical Hessian metric does not,
by itself, make the Cholesky-Vinberg paths geodesics of
its Levi-Civita connection.

Lin's Log-Cholesky metric, restricted to $\Omega$, gives
another complete flat geometry with explicit means and the
same sparsity constraint. Its midpoint nevertheless differs
from the Log-Vinberg midpoint, and  
is generally not $H$-equivariant. The choice of geometry therefore
depends on more than preservation of the zero pattern:
the Cholesky-Vinberg construction retains the homogeneous
symmetry, while the logarithmic constructions give explicit
Riemannian barycenters.

The sparsity question also arises for general Kubo-Ando means.
One can show that the only ambient Kubo-Ando means whose
restrictions map $\Omega\times\Omega$ into $\Omega$ are the
weighted arithmetic means. In the symmetric case, only the
arithmetic mean remains.

This obstruction concerns restriction, not transport.
The triangular operator means $Tf(T^{-1}S)$, with $T,S\in H$,
can instead be transported to $\Omega$ through $\Phi$.
Here $f:(0,\infty)\to(0,\infty)$ is operator monotone and
satisfies $f(1)=1$. The resulting operator means preserve the zero
pattern and are idempotent and $H$-equivariant. They are
symmetric when $f(t)=tf(t^{-1})$, but need not be monotone
in the L\"owner order. 
These questions   and applications  in models of quantum decoherence will be
studied in a forthcoming work, together with extensions of
the present constructions to more general homogeneous
convex cones.

\section*{Acknowledgments}
The author gratefully acknowledges support from \lq\lq Lorraine Université d'Excellence\rq\rq, part of the France 2030 program (ANR-15-IDEX-04-LUE).  He also acknowledge the support of the South Korea-France research cooperation PHC-STAR-2026 program. The author also thanks his Korean colleagues for inviting him
to the conference \lq\lq Operator  means and their applications --  Jeju, December
2025\rq\rq, during which this work began.

\end{document}